\documentclass[11pt,a4paper,reqno]{amsart}
\usepackage{amssymb,latexsym,amsmath,amscd,amsthm,amsfonts, enumerate}
\usepackage{multirow}
\usepackage{color}
\usepackage[all]{xy}
\usepackage{etex}
\usepackage{caption}
\usepackage{soul}
\usepackage{float}
\usepackage{titletoc}
\usepackage[normalem]{ulem}
\usepackage{graphicx}
\usepackage[usenames,dvipsnames]{xcolor}
\usepackage{url}


\usepackage{tikz,tikz-cd}
\usepackage{mathrsfs}
\usepackage[all]{xy}
\usepackage{tikz}
\usepackage{extarrows}
\usepackage{tikz-cd}
\usetikzlibrary{calc}
\usetikzlibrary{matrix,arrows,decorations.pathmorphing}
\usetikzlibrary{snakes}
\usetikzlibrary{shapes.geometric,positioning}
\usetikzlibrary{arrows,decorations.pathmorphing,decorations.pathreplacing}
\usetikzlibrary{positioning,shapes,shadows,arrows,snakes}

\usepackage[colorlinks=true,pagebackref,hyperindex]{hyperref}
\newcommand\myshade{85}
\colorlet{mylinkcolor}{violet}
\colorlet{mycitecolor}{red}
\colorlet{myurlcolor}{cyan}

\hypersetup{
  linkcolor  = mylinkcolor!\myshade!black,
  citecolor  = mycitecolor!\myshade!black,
  urlcolor   = myurlcolor!\myshade!black,
  colorlinks = true,
}

\addtolength{\oddsidemargin}{-0.5in}
        \addtolength{\evensidemargin}{-0.5in}
        \addtolength{\textwidth}{1.2in}
        \addtolength{\topmargin}{-0.5in}
        \addtolength{\textheight}{0.8in}

\numberwithin{equation}{section}

\newtheorem{theorem}{Theorem}[section]
\newtheorem{proposition}[theorem]{Proposition}
\newtheorem{proposition-definition}[theorem]{Proposition-Definition}
\newtheorem{conjecture}[theorem]{Conjecture}
\newtheorem{question}[theorem]{Question}
\newtheorem{corollary}[theorem]{Corollary}
\newtheorem{lemma}[theorem]{Lemma}

\theoremstyle{definition}
\newtheorem{remark}[theorem]{Remark}

\newtheorem{definition}[theorem]{Definition}

\newcommand{\exc}{\operatorname{exp}\nolimits}
\newcommand{\End}{\operatorname{End}\nolimits}
\newcommand{\thick}{\mathsf{thick}}
\newcommand{\Hom}{\mathrm{Hom}}

\newcommand{\Int}{\mathrm{Int}}

\newcommand{\za}{\alpha}
\newcommand{\zb}{\beta}

\newcommand{\zg}{\gamma}

\newcommand{\Z}{\mathbb{Z}}

\newcommand{\calt}{\mathcal{T}}


%
%

%
\def\s{\stackrel}

\definecolor{dark-green}{RGB}{14,150,2}
\definecolor{red}{RGB}{250,0,0}

\begin{document}

\title[Braid group actions on branched coverings and full exceptional sequences]{Braid group actions on branched coverings and full exceptional sequences}

\author{Wen Chang}
\address{(W. Chang) School of Mathematics and Statistics, Shaanxi Normal University, Xi'an 710062, China}
\email{changwen161@163.com}

\author{Fabian Haiden}
\address{(F. Haiden) Centre for Quantum Mathematics, Department of Mathematics and Computer Science, University of Southern Denmark, Campusvej 55, 5230 Odense, Denmark}
\email{fab@sdu.dk}

\author{Sibylle Schroll}
\address{(S. Schroll) Insitut f\"ur Mathematik, Universit\"at zu K\"oln, Weyertal 86-90, K\"oln, Germany and
Institutt for matematiske fag, NTNU, N-7491 Trondheim, Norway}
\email{schroll@math.uni-koeln.de}

\keywords{branched covering, Hurwitz system, Fukaya category, braid group action, exceptional sequence, gentle algebra.}


\date{\today}

\subjclass[2010]{16E35, 
57M50}

\begin{abstract}
We relate full exceptional sequences in Fukaya categories of surfaces or equivalently in derived categories of graded gentle algebras to branched coverings over the disk, building on a previous classification result of the first and third author \cite{CS22}.
This allows us to apply tools from the theory of branched coverings  such as Birman--Hilden theory and Hurwitz systems to study the natural braid group action on exceptional sequences.
As an application, counterexamples are given to a conjecture of Bondal--Polishchuk~\cite{BP94} on the transitivity of the braid group action on full exceptional sequences in a triangulated category.
\end{abstract}

\maketitle
\setcounter{tocdepth}{1}

\tableofcontents

\section{Introduction}\label{Introductions}

Beilinson's discovery of a full exceptional sequence in $D^b(\mathrm{Coh}(\mathbb P^n))$, together with generalizations to other Fano varieties, established a bridge between algebraic geometry and representation theory of finite-dimensional algebras.
Bondal and Gorodentsev later independently showed that the set of full exceptional sequences in any triangulated category carries a natural action of the braid group $\mathfrak{B}_n$.
This action was studied further by Bondal and Polishchuk in~\cite{BP94} who made the following conjecture.

\begin{conjecture}[Bondal--Polishchuk]
\label{conj:bp}
In any triangulated category admitting a full exceptional sequence of length $n$, the group $\mathbb{Z}^n\rtimes \mathfrak{B}_n$ acts transitively on the set of full exceptional sequences.
\end{conjecture}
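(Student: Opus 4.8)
The Bondal--Polishchuk conjecture is in fact \emph{false}, so the task is really to construct explicit counterexamples, and the plan is to translate the braid group action into the language of branched coverings where non-transitivity is a familiar phenomenon. The starting point is the classification of \cite{CS22}: for a graded gentle algebra $\Lambda$ (equivalently, the partially wrapped Fukaya category of a graded marked surface $S$), full exceptional sequences in $D^b(\Lambda)$ are encoded by certain finite collections of graded curves on $S$ that cut $S$ into cells. I would first reformulate the braid group action $\mathfrak{B}_n$ on full exceptional sequences, together with the shift action of $\mathbb{Z}^n$, purely as local moves on such collections of curves, so that the orbit set becomes a topological--combinatorial object on $S$.

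The next step is to package each such full exceptional sequence as a \textbf{branched covering} $p\colon S \to \mathbb{D}$ over the disk: the $n$ curves of the formal system, suitably arranged, should exhibit $S$ as a cover of $\mathbb{D}$ branched over $n$ interior points, with the exceptional objects corresponding to preimages of a standard system of arcs joining the boundary to the branch points. This produces a \emph{Hurwitz system}, a tuple $(g_1,\dots,g_n)$ in the relevant monodromy group (a symmetric group on the sheets, refined by mapping-class data of the fibre) with prescribed product equal to the boundary monodromy. I then want to prove the key dictionary: under $S \mapsto \{\text{its Hurwitz systems}\}$, the braid group action goes over to the classical \emph{Hurwitz action} $\sigma_i\colon (\dots,g_i,g_{i+1},\dots)\mapsto(\dots,g_i g_{i+1} g_i^{-1},g_i,\dots)$, while the $\mathbb{Z}^n$-action corresponds to inserting meridian loops / shifts on the individual branch generators. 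Birman--Hilden theory enters here to control the ambiguity of the construction: it identifies the fibre-preserving mapping class group of $S$ with the mapping class group of the marked disk, which is exactly what is needed to show that two formal systems yield the same triangulated category \emph{and} the same covering data precisely when the associated Hurwitz systems differ by a global change of coordinates.

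Granting this dictionary, transitivity of the $\mathbb{Z}^n\rtimes\mathfrak{B}_n$-action on full exceptional sequences of $D^b(\Lambda)$ becomes transitivity of the Hurwitz action on the set of Hurwitz systems realizing the fixed covering type of $S$. The final step is therefore to exhibit a branched cover $S\to\mathbb{D}$ whose Hurwitz action is \emph{not} transitive and translate it back to a concrete graded gentle algebra. There is a reservoir of such phenomena: the Hurwitz action on tuples of transpositions with a full-cycle product is transitive (Clebsch--Hurwitz--L\"uroth), but this fails once one drops the genus-zero / primitivity hypotheses, and small finite groups admit generating tuples with fixed product and fixed multiset of conjugacy classes that split into several Hurwitz orbits. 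I would pick such an example, check via Riemann--Hurwitz that the two Hurwitz systems produce the \emph{same} surface $S$ (hence the same $D^b(\Lambda)$), and separate them by a Hurwitz-invariant such as the Nielsen-equivalence class of the monodromy tuple. (Note that the counterexample must necessarily use a non-hereditary $\Lambda$, since for hereditary algebras the braid action on full exceptional sequences is classically transitive.)

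The main obstacle is precisely this separating invariant. It is not enough to land in distinct Hurwitz orbits: one must verify that the two exceptional sequences remain inequivalent after also allowing the $\mathbb{Z}^n$-shifts and the action of the non-fibre-preserving part of the mapping class group of $S$ — that is, that the extra flexibility of the categorical picture beyond naive Hurwitz moves does not secretly merge them. Making the Birman--Hilden step sharp enough to rule out such a hidden equivalence, and producing a concrete small surface where this bookkeeping can be carried out explicitly, is where the real work lies.
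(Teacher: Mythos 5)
Your overall framework --- the dictionary between full exceptional sequences, exceptional dissections, simple branched coverings of the disk, and Hurwitz systems, with the braid group acting by Hurwitz moves and Birman--Hilden theory controlling the relation to the mapping class group --- is exactly the framework of the paper. The gap is in the mechanism you propose for actually producing and certifying a counterexample. You plan to exhibit two monodromy tuples for the same surface lying in distinct Hurwitz orbits and to separate them by a Hurwitz-invariant of the tuple (Nielsen class, conjugacy-class data, etc.). In the setting forced by exceptional sequences this cannot work: the coverings are \emph{simple}, so every local monodromy is a transposition, and by Kluitmann's theorem (Theorem~\ref{thm:kluitmann} in the paper) the Hurwitz action on tuples of transpositions generating the symmetric group is transitive once the cycle type of the product $\tau_1\cdots\tau_n$ is fixed --- and that cycle type is determined by the topological type of $(S,M)$. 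So for a fixed marked surface there is exactly \emph{one} Hurwitz orbit of unframed monodromy data, and any invariant extracted from the bare tuple is constant. Your proposed reservoir of examples (non-transitive Hurwitz actions for small finite groups, or after dropping primitivity) would require non-simple or non-symmetric-group monodromy, which does not correspond to exceptional sequences here.

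The non-transitivity instead lives entirely in the \emph{framing}: the same Hurwitz system is realized by many non-isotopic exceptional dissections on $(S,M)$, permuted simply transitively by $\mathrm{Mod}(S)$, while the braid group only reaches those in the orbit of the symmetric mapping class group $\mathrm{SMod}(S,\mathfrak p)\subset\mathrm{Mod}(S)$. The paper therefore restricts to branched \emph{double} covers (so $|M|=2$, where Birman--Hilden applies cleanly) and invokes the theorem of Ghaswala--McLeay: for $n\geq 4$ branch points, $\mathrm{SMod}(S,\mathfrak p)$ has infinite index in $\mathrm{Mod}(S)$, giving infinitely many braid orbits; the separating ``invariant'' is the coset of $\mathrm{SMod}(S,\mathfrak p)$, and an explicit pair of inequivalent sequences is obtained by applying a boundary Dehn twist that is not symmetric. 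You flag in your final paragraph that the interaction with the non-fibre-preserving part of $\mathrm{Mod}(S)$ is ``where the real work lies,'' but you have the logic inverted: that is not a residual check on top of a Hurwitz-orbit computation --- it is the entire content of the counterexample, and your primary proposed invariant sees none of it.
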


Here the group $\mathbb{Z}^n$ acts on full exceptional sequences by shifting the objects.
Equivalently, one can ask if $\mathfrak{B}_n$ acts transitively on full exceptional sequences \textit{up to shift}, i.e. modulo the action of $\mathbb{Z}^n$.
We will thus follow the convention that two exceptional sequences are considered the same if they only differ by shifts of the exceptional objects.

The conjecture has been verified for derived categories of hereditary algebras \cite{C93, R94}, for derived categories of coherent sheaves over some special varieties such as del Pezzo surfaces, projective planes, and weighted projective lines, \cite{KM02} see more details in \cite{M04}, as well as for the Hirzebruch surface of degree 2~\cite{IOU21}.
Recently, it was proved in work of the first and third author~\cite{CS22} that this conjecture holds for the derived category of a gentle algebra arising from a dissection of a marked surface with zero genus.
On the other hand, in this paper we will show here that the result does not extend to positive genus, providing counterexamples to Conjecture~\ref{conj:bp} --- the first ones found, to our knowledge. 

\begin{theorem}
\label{thm:intro1}
Let $(S,M,\nu)$  be a marked graded  surface where $S$ is a compact oriented surface with boundary, $M \subset \partial S$ a finite set of marked points with $|M|=2$ and $\nu$  a grading  (line field) on $S$. Suppose that $S$ has either one boundary component and genus $g(S)\geq 2$ or two boundary components and $g(S)\geq 1$.
Then the action of the braid group on the set of full exceptional sequences in the (partially wrapped) Fukaya category 
$\mathcal F(S,M,\nu)$
is not transitive.
\end{theorem}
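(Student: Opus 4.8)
\medskip\noindent The plan is to reduce Theorem~\ref{thm:intro1} to a non-transitivity statement for branched coverings of the disk which can then be checked on explicit examples. \emph{First}, by the classification of the first and third authors in~\cite{CS22}, full exceptional sequences in $\calf(S,M,\nu)$ (up to shift) are in natural bijection with graded dissections of $(S,M,\nu)$ --- families of pairwise non-crossing graded arcs with endpoints in $M$ that cut $S$ into polygons --- and the braid group action corresponds to mutation of dissections. When $|M|=2$ every arc joins the two marked points, the number of arcs of a dissection equals $n=2g(S)+b$ with $b$ the number of boundary components of $S$, and an Euler-characteristic count shows that any dissection cuts $S$ into exactly two polygons; the dissection is then the graded combinatorial datum of how to reassemble $S$ from these two polygons.

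\emph{Second}, and this is the new input, I would show that for $|M|=2$ this datum is exactly a branched covering $\pi_\Delta\colon S\to\mathbb D$ over the disk, so that a dissection is encoded by a Hurwitz system together with compatible grading data, and --- crucially --- mutation of dissections translates into the Hurwitz action of $\mathfrak B_n$ on such systems. Birman--Hilden theory is the tool that makes this faithful: it guarantees that the homeomorphisms of $S$ needed to compare two dissections can be chosen fiber-preserving, so that the covering together with the pair $(S,\nu)$ is a complete invariant on the branched-covering side, just as $(S,M,\nu)$ with its exceptional sequence is on the other side.

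\emph{Third}, combining the two reductions, the braid group acts transitively on full exceptional sequences in $\calf(S,M,\nu)$ if and only if the Hurwitz action is transitive on the set of Hurwitz systems whose associated branched covering realizes $(S,M,\nu)$; so it suffices to exhibit two such systems lying in distinct Hurwitz orbits. I would produce these by hand in the minimal cases $(g(S),b)=(2,1)$, where $n=5$, and $(g(S),b)=(1,2)$, where $n=4$: write down two branched coverings with the prescribed topological type and grading and separate them by a discrete invariant of the covering --- of winding-number or mod-$2$ type, extracted from the monodromy and the line field --- that is visibly unchanged by Hurwitz moves. One then passes to all larger $g(S)$ by a stabilization argument, enlarging each of the two systems by one further branch point when attaching a handle while keeping the invariant distinct on the two. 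This is also what reconciles the statement with the genus-$0$ theorem of~\cite{CS22}: in genus $0$ the relevant invariant is forced to a single value, whereas positive genus leaves room for two.

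\emph{The main obstacle} is the second step: setting up the branched-covering dictionary precisely and, above all, transporting the line field $\nu$ correctly through it, so that the Hurwitz-side data really parametrizes $\calf(S,M,\nu)$ for the prescribed $\nu$ rather than for some other grading --- together with isolating, in the third step, an invariant that is simultaneously Hurwitz-invariant and computable on small examples, since the obvious candidates are not preserved by Hurwitz moves. Once the dictionary and the invariant are in hand, constructing and distinguishing the two coverings is routine.
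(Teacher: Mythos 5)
Your first two reductions match the paper's: full exceptional sequences in $\mathcal F(S,M,\nu)$ correspond to exceptional dissections of $(S,M)$ compatibly with mutation (and for any $\nu$), and dissections correspond to simple branched coverings of the disk with matching paths, equivalently framed Hurwitz systems, with the braid group acting by Hurwitz moves. The gap is in your third step, and it is a genuine one. For $|M|=2$ the covering is a \emph{double} cover, so the monodromy takes values in the symmetric group on two letters and every Hurwitz system is the constant sequence $(\tau,\dots,\tau)$ with $\tau$ the unique transposition. There is therefore no discrete invariant ``extracted from the monodromy'' that can separate two dissections: the unframed Hurwitz data is literally identical for all of them (consistently with Kluitmann's theorem that the Hurwitz action on unframed systems is transitive). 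Nor can the line field supply the invariant, since the theorem is asserted for \emph{every} grading $\nu$ and the bijection with dissections does not depend on $\nu$. What distinguishes two dissections is the \emph{framing}, i.e.\ the identification of the abstract covering surface with the fixed $(S,M)$, and this is where the paper's key input enters --- an input your sketch does not supply and that your proposed invariant cannot replace.

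Concretely, the paper shows that $\mathrm{Mod}(S)$ acts \emph{simply transitively} on the set of exceptional dissections: freely because the union of the arcs is a spine of $S$, and transitively because all dissections induce the same Hurwitz system. By Birman--Hilden theory the braid group acts through the symmetric mapping class group $\mathrm{SMod}(S,\mathfrak p)\subset\mathrm{Mod}(S)$, so the braid orbits are exactly the cosets of $\mathrm{SMod}(S,\mathfrak p)$, and non-transitivity amounts to $\mathrm{SMod}(S,\mathfrak p)\neq\mathrm{Mod}(S)$. This is a theorem of Ghaswala--McLeay for double covers of the disk branched over $n\geq 4$ points (the subgroup even has infinite index); the hypotheses $g\geq 2,\ b=1$ or $g\geq 1,\ b=2$ are precisely $n=b+2g\geq 4$. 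An explicit pair of dissections in distinct orbits is then obtained by applying a Dehn twist about a boundary component, which is not a symmetric mapping class. This single group-theoretic fact replaces both your hand-constructed invariant and your stabilization over genus; without some substitute for it, your third step does not go through.
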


See Theorem~\ref{thm:main1} in the main text.
Let us explain some of the ingredients of the proof.
Exceptional sequences and their mutation were given a geometric interpretation by Seidel in the theory of symplectic Lefschetz fibrations and the associated Fukaya--Seidel categories~\cite{S01a,S01b,S08}.
Very roughly speaking, symplectic Lefschetz fibrations $f:X\to D$ are Morse functions in the symplectic context.
Here, $X$ is symplectic (with boundary) and $D$ is the unit disk in $\mathbb C$.
One can associate a triangulated category $\mathrm{FS}(X,f)$ to them which comes, by construction, with a full exceptional sequence whose objects correspond to critical points of $f$.
The braid group acts as the mapping class group of $D\setminus \{\text{critical values}\}$.
In the case $\dim_{\mathbb R}(X)=2$, this reduces to the much more elementary theory of branched coverings of surfaces (specifically discussed in~\cite{S01b}).
Therefore, we will not assume here that the reader has any knowledge of the construction of Fukaya--Seidel categories, but instead describe the correspondence between branched covers of the disk and full exceptional sequences directly (of which only one direction is contained in the general higher-dimensional theory).

\begin{theorem}
\label{thm:intro2}
Let $S$ be an oriented surface with boundary and $M\subset \partial S$ a finite set of marked points, so that each component of $\partial S$ contains at least one point of $M$.
Then there are canonical bijections between equivalence classes of the following:
\begin{enumerate}
\item exceptional dissections of $(S,M)$ (in the sense of~\cite{CS22}),
\item simple branched coverings $\mathfrak p:S\to D$ of $S$ over the unit disk with $M=\mathfrak p^{-1}(-1)$ together with a choice of matching paths,and
\item full exceptional sequences in  $\mathcal F(S,M,\nu)$, 
for any choice of grading (line field) $\nu$.
\end{enumerate}
\end{theorem}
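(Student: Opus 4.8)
The plan is to establish the bijections in a cycle, using (1) as the pivot, since exceptional dissections are the most combinatorial of the four objects and the classification of~\cite{CS22} gives us a firm grip on them. The heart of the matter is the equivalence $(1)\Leftrightarrow(2)$, from which $(2)\Leftrightarrow(3)$ is essentially a reformulation (a Hurwitz system \emph{is} the monodromy representation of a branched cover, and the $(S,M)$-framing records exactly the data of a homeomorphism of the total space with $S$ carrying the preimage of the basepoint to $M$), and $(1)\Leftrightarrow(4)$ is already in~\cite{CS22} once one knows that exceptional dissections classify full exceptional sequences in $\mathcal F(S,M,\nu)$ independently of the grading $\nu$ (the grading only affects the objects up to shift, and we work up to shift by convention).

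\textbf{From a dissection to a branched cover.} Given an exceptional dissection $\Delta$ of $(S,M)$, I would build $\mathfrak p\colon S\to D$ as follows. The dual graph of $\Delta$ (vertices = faces of the dissection, together with the boundary data) should be realized as a ribbon graph; collapsing each arc of $\Delta$ and thickening produces a map to a disk with one branch point per face, the monodromy around each branch point being a transposition determined by which two sheets meet along the corresponding arc. Concretely, one fixes $\mathfrak p^{-1}(-1)=M$, so the number of sheets is $|M|$; an arc of $\Delta$ with both endpoints in $M$ lifts a path in $D$ from $-1$ back to $-1$ and prescribes the transposition of the two endpoints. One checks the Riemann--Hurwitz count: the Euler characteristic of $S$ plus the number of branch points equals $|M|$ times the Euler characteristic of $D$, i.e.\ the number of interior faces of $\Delta$ is forced, and this matches the length of the corresponding exceptional sequence. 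The matching paths are precisely the arcs of $\Delta$ themselves, pushed down to $D$: each becomes an embedded path connecting two of the branch values (or a branch value to the basepoint), transverse to the rest.

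\textbf{From a branched cover to a dissection.} Conversely, given $\mathfrak p\colon S\to D$ simple with $M=\mathfrak p^{-1}(-1)$ and a chosen system of matching paths, I would lift each matching path (together with the transposition data from Birman--Hilden theory, i.e.\ the lift of the hyperelliptic-type involution swapping the two relevant sheets) to an arc on $S$; Birman--Hilden guarantees that isotopies of matching paths in the base lift to isotopies of arcs upstairs, so the construction is well defined on equivalence classes. That the resulting collection of arcs is an exceptional dissection --- cuts $S$ into disks, the arcs are pairwise non-crossing, and the sequence they determine is exceptional --- is checked against the defining properties in~\cite{CS22}. The two constructions are mutually inverse by inspection: collapsing arcs and then re-lifting matching paths returns the original arcs up to isotopy.

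\textbf{Main obstacle.} The delicate point is the \emph{equivariance of the braid/mapping class group actions}, i.e.\ showing that the bijections are compatible with mutation on one side and the half-twist action on matching paths / the Hurwitz move on monodromy data on the other. This is what makes the bijections "canonical" and is what the applications (Theorem~\ref{thm:intro1}) actually use. The braid group acts on matching-path systems by half-twists, on Hurwitz systems by the standard Hurwitz moves $(\ldots,\sigma_i,\sigma_{i+1},\ldots)\mapsto(\ldots,\sigma_i\sigma_{i+1}\sigma_i^{-1},\sigma_i,\ldots)$, and on exceptional sequences by mutation; matching these up requires a careful local analysis near a pair of branch points, and it is here that one must invoke Birman--Hilden theory in its precise form (the lift of a half-twist in the base is the corresponding mutation-inducing spherical twist upstairs). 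A secondary technical point is the independence of (4) from $\nu$: different line fields change the grading structure but not the underlying ungraded arc system, and since exceptional sequences are taken up to shift, the bijection with (1) is insensitive to $\nu$; this needs to be stated carefully but should follow from the grading-change formalism already present in the gentle-algebra literature.
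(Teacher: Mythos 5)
Your overall architecture (pivot on exceptional dissections, lift matching paths to arcs, treat Hurwitz systems as monodromy data, quote \cite{CS22} for the Fukaya side) matches the paper's, but the combinatorial core of the dissection--covering correspondence is set up incorrectly. In the paper the branch points are in bijection with the \emph{arcs} of the dissection, not with its faces: each matching path $b_i$ runs from the basepoint $-1$ to a single branch point $p_i$, and its preimage is the single arc $a_i$, which double-covers $b_i$ and folds at the ramification point; the faces of the dissection (one boundary edge each) correspond to the $|M|$ sheets. Your ``one branch point per face'' and your description of matching paths as ``connecting two of the branch values'' import the Lefschetz-fibration notion of a matching path between two critical values, which is not the notion used here, and with that setup the Riemann--Hurwitz count you invoke does not come out right (the count forces the number of branch points to equal the number of arcs, $n=m+b+2g-2$, which is what makes the lifted family maximal). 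Relatedly, your appeal to Birman--Hilden in the lifting direction is misplaced: the coverings here have $|M|$ sheets and are simple but not regular, so there is no ``hyperelliptic-type involution swapping the two relevant sheets'' in general; the paper lifts matching paths by ordinary path lifting away from the branch locus and handles equivalence by deforming the pair $(\mathfrak p_t,\mathbb B_t)$ continuously. Birman--Hilden enters only in Section 5, for double covers, to produce the counterexamples.

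Two substantive verifications are also missing. First, that the lifted arcs actually form an exceptional dissection requires showing they cut $S$ into polygons each containing exactly one boundary segment; the paper does this via \cite[Proposition 1.11]{APS19} by (i) using the total order induced by the counterclockwise ordering at $-1$ to rule out an enclosed subsurface with no boundary segment (a cyclic chain of arcs would force $i_1<i_2<\cdots<i_s<i_1$), and (ii) using Riemann--Hurwitz for maximality. ``Checked against the defining properties'' does not supply this. Second, the reverse construction and injectivity need real arguments: the paper builds the covering explicitly by cutting $|M|$ copies of $D$ along rays from the branch points and regluing sheets according to the endpoints of each arc, then constructs a diffeomorphism to $S$ using that the arc graph is a ribbon-graph deformation retract; and it separately proves that two pairs $(\mathfrak p_i,\mathbb B_i)$ inducing isotopic dissections are equivalent. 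Finally, the equivariance of the braid actions that you flag as the ``main obstacle'' is not part of this theorem at all --- it is a separate statement (Theorem~\ref{theorem:groupaction} and Theorem~\ref{thm:braid}) --- whereas the genuinely nontrivial part of $(1)\Leftrightarrow(4)$ for arbitrary $\nu$, namely that every exceptional object is supported on an embedded interval arc (no closed curves, no self-intersections, proven via the intersection--morphism correspondence and the index relation $i_p(c_1,c_2)+i_p(c_2,c_1)=1$), is glossed over in your sketch.
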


These terms are defined in detail in Section~\ref{sec:3sets}.
The above theorem is a combination of Theorem~\ref{main theorem bijections} and Theorem~\ref{thm:cor-exp} in the main text.
The question of transitivity of the braid group action is thus reduced to a question about branched coverings.
A useful tool here is Birman--Hilden theory~\cite{BH73,FM12,MW21}, which studies the relation between mapping class groups and braid groups.
In particular, we use a recent result of Ghaswala--McLeay~\cite{GM20} to complete the proof of Theorem~\ref{thm:intro1}.

A characterization of those marked surfaces $(S,M)$ for which the sets in Theorem~\ref{thm:intro2} are non-empty follows from~\cite{CS22}.

\begin{corollary}
Let $S$ be a compact oriented surface and $M\subset S$ a finite set of marked points, so that each component of $\partial S$ contains at least one point of $M$.
Then the following are equivalent:
\begin{enumerate}
\item There exists a simple branched covering $\mathfrak p:S\to D$ of $S$ over the unit disk $D$ with $M=\mathfrak p^{-1}(-1)$,
\item $(S,M)$ admits an exceptional dissection,
\item $\mathcal F(S,M,\nu)$ admits a full exceptional sequence (independently of the grading $\nu$),
\item $M\subset \partial S$ and either $|M|\geq 2$ or $|M|=1$ and $S=D$.
\end{enumerate}
\end{corollary}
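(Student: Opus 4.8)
The plan is to read off (1)$\Leftrightarrow$(2)$\Leftrightarrow$(3) from Theorem~\ref{thm:intro2}, and to obtain the equivalence with the combinatorial condition (4) from the classification of~\cite{CS22}, after normalising away two degenerate cases. First I would check that each of (1), (2), (3) already forces $M\subset\partial S$, placing us inside the scope of Theorem~\ref{thm:intro2} (whose standing hypothesis is then exactly ours). For (1) this is the standard fact that, for a branched covering of the disk with branch values in the interior, $\partial S=\mathfrak p^{-1}(\partial D)$ and $\mathfrak p$ restricts to an unramified covering $\partial S\to\partial D$; since $-1\in\partial D$ this gives $M=\mathfrak p^{-1}(-1)\subset\partial S$, and surjectivity of $\mathfrak p$ forces $M\neq\emptyset$. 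For (2) and (3), exceptional dissections and the categories $\mathcal F(S,M,\nu)$ at issue only involve boundary marked points, so the relevant set is empty otherwise. I would also dispose of the case $\partial S=\emptyset$ at the outset: then $M$ is empty or has an interior point, so none of (1)--(4) holds and the statement is vacuous; from now on $\partial S\neq\emptyset$, and the hypothesis forces $|M|\geq|\pi_0(\partial S)|\geq 1$.

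With $M\subset\partial S$ and every boundary component marked we are in the situation of Theorem~\ref{thm:intro2}, which provides canonical bijections between exceptional dissections of $(S,M)$, simple branched coverings $\mathfrak p:S\to D$ with $M=\mathfrak p^{-1}(-1)$ together with a matching-path system, Hurwitz systems, and full exceptional sequences in $\mathcal F(S,M,\nu)$. In particular any one of these four sets is non-empty if and only if all are, which gives (1)$\Leftrightarrow$(2)$\Leftrightarrow$(3); the only point to check is that (1) asks merely for a branched covering, whereas Theorem~\ref{thm:intro2} records a branched covering equipped with a matching-path system --- but every such covering admits one (choose a distinguished collection of arcs from $-1$ to the branch values, as in Lefschetz fibration theory), so this extra datum does not affect non-emptiness.

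It then remains to match non-emptiness with (4). When $|M|=1$ there is, by the standing hypothesis, a single boundary component, so the boundary covering $\partial S\to\partial D$ has a single sheet, $\deg\mathfrak p=1$, and $\mathfrak p$ is a homeomorphism; hence (1) holds iff $S=D$ --- exactly the permitted subcase of (4) --- and together with the first paragraph this already yields (1)$\Rightarrow$(4). For the converse with $|M|\geq 2$ one has to exhibit, say, a simple branched covering as in~(1), and I would take this from the existence part of the classification in~\cite{CS22}; alternatively one builds it directly. Put $d=|M|$ and let $(\ell_1,\dots,\ell_n)$ be the numbers of marked points on the $n$ boundary components of $S$, so $\ell_i\geq 1$ and $\sum_i\ell_i=d$. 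Riemann--Hurwitz forces a degree-$d$ simple branched covering with total space of genus $g(S)$ and boundary monodromy of cycle type $(\ell_1,\dots,\ell_n)$ to have $b=d-2+2g(S)+n$ branch values; and such monodromy data exist --- take a shortest factorization into $d-n$ transpositions of a permutation of cycle type $(\ell_1,\dots,\ell_n)$ and adjoin $n-1+g(S)$ cancelling pairs of transpositions, $n-1$ of them chosen so as to make the configuration transitive. The associated covering has total space an oriented surface with boundary of the correct Euler characteristic and number of boundary components, hence homeomorphic to $S$. In the remaining case $|M|=1$, $S=D$ one simply takes $\mathfrak p=\mathrm{id}_D$.

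I expect this last step to be the real obstacle. The non-emptiness criterion in~\cite{CS22} is formulated in the language of dissections and gentle algebras rather than branched coverings, so it has to be transported to condition~(4), and one must verify carefully that the one-marked-point case genuinely fails for every surface other than the disk. The direct branched-cover construction above sidesteps the transport but requires a careful check of the Riemann--Hurwitz bookkeeping and of the transitivity of the monodromy subgroup.
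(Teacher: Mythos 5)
Your proposal is correct, and its skeleton matches the paper's (very terse) argument: the paper proves the corollary by citing Theorem~\ref{thm:intro2} for the equivalence of (1) with (2)--(3), and \cite{CJS22}, \cite{CS22} for the equivalence of (2), (3) with (4). Where you genuinely add something is in two places. First, you explicitly handle the degenerate configurations the paper leaves implicit: that each of (1)--(3) already forces $M\subset\partial S$ (for (1) via $\mathfrak p^{-1}(\partial D)=\partial S$, for (2)--(3) by the conventions on marked surfaces), and that condition (1) concerns a bare covering while Theorem~\ref{thm:intro2} concerns coverings equipped with matching paths, so one must observe that matching paths always exist. Second, for $(4)\Rightarrow(1)$ you give a self-contained Hurwitz-monodromy construction instead of transporting the \cite{CS22} classification: your bookkeeping is right --- a degree-$d$ simple cover of $D$ with $b$ branch points has $\chi=d-b$, so $b=d+n+2g-2$ for $n$ boundary components of genus $g$; a minimal factorization of a permutation of cycle type $(\ell_1,\dots,\ell_n)$ uses $d-n$ transpositions, and adjoining $n-1+g$ cancelling pairs, $n-1$ of them bridging the cycle supports, yields a transitive (hence full-symmetric-group) tuple with the correct product, and the resulting cover is connected, orientable, of the right Euler characteristic and boundary monodromy, hence diffeomorphic to $(S,M)$. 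This buys a proof of the existence direction that does not depend on the dissection-theoretic classification in \cite{CS22}, at the cost of the Riemann--Hurwitz and transitivity checks you correctly identify as the point requiring care; the $|M|=1$ case is also handled correctly (degree one forces $S=D$).
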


The equivalence between (2), (3), and (4) is already contained in~\cite{CJS22} and \cite{CS22}, while the equivalence with (1) follows from Theorem~\ref{thm:intro2} above.

\subsection*{Acknowledgments}

This paper is partly a result of the ERC-SyG project Recursive and Exact New Quantum Theory (ReNewQuantum) which received funding from the European Research Council (ERC) under the European Union's Horizon 2020 research and innovation programme under grant agreement No 810573, 
the VILLUM FONDEN, VILLUM Investigator grant 37814, Sapere Aude grant 3120-00076B from the Independent Research Fund Denmark (DFF),
 by Fundamental Research Funds
 for the Central Universities (No. GK202403003), the NSF of China (Grant No. 12271321)
and the DFG through
the project SFB/TRR 191 Symplectic Structures in Geometry, Algebra and Dynamics (Projektnummer 281071066-TRR 191).

\section{Exceptional dissections, branched coverings, and Hurwitz systems}
\label{sec:3sets}

A \emph{marked surface} is a pair $(S,M)$, where $S$ is a compact oriented surface with boundary $\partial S\neq \emptyset$ and $M\subset \partial S$ is a finite subset such that each component of $\partial S$ contains at least one point of $M$.
A diffeomorphism of marked surfaces $f:(S,M)\rightarrow (S',M')$ is a diffeomorphism $f:S\rightarrow S'$ with $M'=f(M)$. {Denote by $\mathrm{Mod}(S)$ the mapping class group of $S$.}
For simplicity, we always assume that $S$ is connected.

Fix a marked surface $(S,M)$.
We will consider the following three sets:
\[\mathfrak{S}_1=\{\text{Isotopy classes of exceptional dissections on $(S,M)$}\},\]
\[\mathfrak{S}_2=\{\text{Equivalence classes of simple branched coverings $(S,M)\to D$ with matching paths}\},\]
\[{ \mathcal{H}=\{\text{Hurwitz systems of type $(S,M)$}\}.}\]

{ The terms used in the definitions of these sets will be defined precisely in this section. The main result is the following:}

\begin{theorem}\label{main theorem bijections}
There are canonical bijections {$\mathfrak S_1\cong\mathfrak S_2$ and $\mathfrak S_1/\mathrm{Mod}(S)\cong \mathcal{H}$}.
\end{theorem}

\subsection{Exceptional dissections}

\begin{definition}
Let $(S,M)$ be a marked surface. We call an ordered set $\mathbb{A}=(a_1,a_2,\ldots,a_n)$ of embedded arcs in $S$ with endpoints in $M$ an \emph{exceptional dissection}, if the arcs meet only in endpoints and cut $S$ into polygons, each of which has exactly one boundary edge, and if $a_j$ follows $a_i$ counterclockwise at a common endpoint, then $i<j$.
Two exceptional dissections are considered equivalent if they differ by isotopies of the arcs fixing the endpoints.
\end{definition}

For a marked surface $(S,M)$, we denote by $b$ the number of boundary components, by $m$ the number of marked points, and by $g$ the genus of $S$.
Then there are exactly
\[ n=m+b+2g-2 \]
arcs in an exceptional dissection on $(S,M)$, see for example \cite{APS19}, noticing that an exceptional dissection is a special kind of so called admissible dissection considered in \cite{APS19}.

\subsection{Branched coverings}

A smooth proper map  ${\mathfrak p}:S\to S'$ of surfaces is a \emph{branched covering} if it is a (finite sheeted) covering of topological spaces away from a discrete set of points $B\subset S'\setminus \partial S'$ (the \emph{branch points}) and over each point of $B$ one or more sheets of the covering come together in $S$ (at the \emph{ramification points}).
A branched covering is \emph{simple} if precisely two sheets come together over any branch point, i.e. all fibers of  $\mathfrak p$ have $m$ or $m-1$ points, where $m$ is the number of sheets of the covering.

Let $D:=\{z\in \mathbb{C}, |z|\leqslant 1\}$ be the unit disk and choose $-1\in \partial D$ as a basepoint.

\begin{definition}
Let $(S,M)$ be a marked surface.
A \emph{branched covering $(S,M)\to D$} is a simple branched covering $\mathfrak p:S\longrightarrow D$ with fiber $M$ over $-1\in D$, i.e. $M=\mathfrak{p}^{-1}(-1)$.
\end{definition}

Note that $\mathfrak p$ as above necessarily has $m=|M|$ sheets and $n=m+b+2g-2$ branch points by the Riemann--Hurwitz formula (see the proof of Proposition~\ref{prop:exc} below).
We typically denote the branch points by $p_1, p_2, \ldots, p_n\in D$.

\begin{definition}
Let $D$ be the disk with points $p_1, p_2,\ldots, p_n$ in its interior.
A set of \emph{matching paths} for $p_1, p_2,\ldots, p_n$ is a set $\mathbb{B}=(b_1,b_2,\ldots,b_n)$ of embedded paths in $D$ such that
\begin{enumerate}
\item $b_i$ joins $p_i$ to $-1$ and does not pass through any of the $p_j$'s in its interior,
\item $b_i\cap b_j=\{-1\}$, and
\item $b_{i+1}$ follows $b_i$ counterclockwise at $-1$, see Figure \ref{fig:matchingpaths}.
\end{enumerate}
\end{definition}

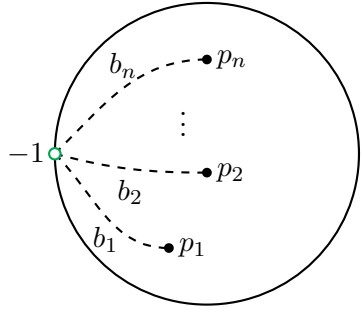
\begin{figure}[h]
\begin{center}{
\begin{tikzpicture}[scale=0.5]
\draw[thick,fill=white] (0,0) circle (4cm);		

\path (-4,0) coordinate (-1)
      (-1,-2.5) coordinate (p1)
      (0,-.5) coordinate (p2)
      (-1,1) coordinate (p3)
      (0,2.5) coordinate (p4);
\draw (-1) node[left] {$-1$};
\draw (p1) node[right] {$p_1$};
\draw (p2) node[right] {$p_2$};
\draw (p3) node[right] {$\vdots$};
\draw (p4) node[right] {$p_n$};

\draw[dashed,thick] (-1) to[out=-45,in=180] node[below]{$b_1$} (p1);
\draw[dashed,thick] (-1) to[out=-15,in=180] node[below]{$b_2$} (p2);
\draw[dashed,thick] (-1) to[out=45,in=180] node[above]{$b_n$} (p4);

\draw[Green,thick,fill=white] (-1) circle (.15cm);

\draw[thick,fill] (p1) circle (.1cm)
(p2) circle (.1cm)
(p4) circle (.1cm);	
\end{tikzpicture}}
\end{center}
\begin{center}
\caption{A collection of matching paths $b_1,\ldots,b_n$ from $-1$ to the branch points $p_1,\ldots,p_n$.}\label{fig:matchingpaths}
\end{center}
\end{figure}

For given $(S,M)$ we consider two pairs $(\mathfrak p_i,\mathbb B_i)$, $i=0,1$, consisting of a branched covering $\mathfrak p_i$ with choice of matching paths $\mathbb B_i$ \emph{equivalent} if there is a continuous family $(\mathfrak p_t,\mathbb B_t)$, $t\in [0,1]$ interpolating between them.
Note that the branch points are allowed to move along this path, and $\mathbb B_t$ must connect to the branch points of $\mathfrak p_t$ for any $t\in [0,1]$.

\subsection{From branched coverings to exceptional dissections}

Let $(S,M,\mathfrak{p},\mathbb{B})$ be a branched covering with matching paths. Then each matching path, $b_i$, lifts uniquely to a path $a_i$ in $S$ which covers it two-to-one except at the branch point.
In particular, $a_i$ is a path with (distinct) endpoints in $M$ and contains the (unique) ramification point corresponding to $p_i$.
Let $\mathbb A:=\mathfrak{p}^{-1}(\mathbb B) =(a_1,\ldots,a_n)$.

\begin{proposition}\label{prop:exc}
The set $\mathbb{A}$ is an exceptional dissection on the marked surface $(S,M)$.
\end{proposition}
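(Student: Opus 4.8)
The plan is to verify, in turn, the defining properties of an exceptional dissection: that each $a_i$ is an embedded arc with two distinct endpoints in $M$; that arcs $a_i\neq a_j$ meet only in endpoints; that $\mathbb{A}$ cuts $S$ into polygons each having exactly one boundary edge; and that if $a_j$ follows $a_i$ counterclockwise at a common endpoint, then $i<j$. When $|M|=1$ we have $S=D$ and $n=0$ and there is nothing to prove, so assume $m:=|M|\geq 2$; since $S$ is connected, the monodromy of $\mathfrak{p}$ then acts transitively on the $m$ sheets. The two local properties are straightforward: near the ramification point $q_i$ over $p_i$ the covering is modelled on $w\mapsto w^2$, and since $b_i$ is an embedded path ending at $p_i$, its preimage through $q_i$ is a single embedded arc $a_i$ which meets $b_i\setminus\{p_i\}$ in two lifts lying in two distinct sheets of the unbranched covering over $b_i\setminus\{p_i\}$; these two lifts are disjoint embedded arcs, so $a_i$ is embedded, and its endpoints are the two lifts of $-1$ in these two sheets, hence two distinct points of $M=\mathfrak{p}^{-1}(-1)$. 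Moreover, $b_i\cap b_j=\{-1\}$ for $i\neq j$ gives $\mathfrak{p}^{-1}(b_i)\cap\mathfrak{p}^{-1}(b_j)\subseteq\mathfrak{p}^{-1}(-1)=M$, so in particular $a_i\cap a_j\subseteq M$.

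For the polygon property --- the heart of the argument --- I would use the cut-and-paste description of the branched covering $\mathfrak{p}$. The union $T=b_1\cup\dots\cup b_n$ is an embedded tree meeting $\partial D$ only at $-1$, so cutting $D$ open along $T$ produces a disk $\widehat{D}$ whose boundary is formed by a single edge lying over $\partial D$ together with two edges $f_i,e_i$ lying over each $b_i$, where $f_i$ and $e_i$ are adjacent at the unique vertex of $\widehat{D}$ over $p_i$. Because $\mathfrak{p}$ is an ordinary $m$-sheeted covering over the simply connected set $D\setminus T$, the surface $S$ is reconstructed from $m$ copies $\widehat{D}\times\{1\},\dots,\widehat{D}\times\{m\}$ by identifying the edge $f_i$ in sheet $k$ with the edge $e_i$ in sheet $\tau_i(k)$, where $\tau_i$ is the monodromy transposition at $p_i$ transported along $b_i$. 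In this description $a_i$ is exactly the union of the four boundary edges over $b_i$ carried by the two sheets $\alpha_i,\beta_i$ interchanged by $\tau_i$, joined up at $q_i$; hence cutting $S$ open along $\mathbb{A}$ undoes precisely the identifications with $\tau_i(k)\neq k$ and retains those with $\tau_i(k)=k$, so that the result is a disjoint union $\bigsqcup_{k=1}^{m}P_k$ in which $P_k$ is the disk $\widehat{D}$ with, for every $i$ fixing $k$, the adjacent boundary edges $f_i$ and $e_i$ zipped together. Zipping two adjacent boundary edges of a disk again produces a disk, and leaves the edge over $\partial D$ untouched while zipped edges become interior; hence each $P_k$ is a polygon with exactly one boundary edge. (Transitivity of the monodromy ensures that for each $k$ at least one $\tau_i$ moves $k$, so $P_k$ indeed retains its boundary edge; equivalently, every point of $M$ is an endpoint of some $a_i$.)

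It remains to treat the ordering condition. At any point $x\in M\subset\partial S$ the map $\mathfrak{p}$ is an orientation-preserving local homeomorphism onto a half-disk neighbourhood of $-1$, and it carries the arcs of $\mathbb{A}$ emanating from $x$ to some of the matching paths $b_1,\dots,b_n$, preserving their counterclockwise order at $-1$. By property~(3) in the definition of matching paths this order is $b_1,b_2,\dots,b_n$, so the arcs of $\mathbb{A}$ at $x$ occur counterclockwise in order of increasing index, which is exactly the required condition.

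The step I expect to be the main obstacle is the polygon property: one must make the cut-and-paste dictionary precise enough to identify which of the boundary edges of the copies of $\widehat{D}$ assemble into the dissection arcs $a_i$, as opposed to the spurious components of $\mathfrak{p}^{-1}(b_i)$ coming from the regular points of the fibre over $p_i$, and to verify that undoing only the identifications along the $a_i$ leaves each of the resulting sheet-pieces a disk.
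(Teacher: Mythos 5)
Your proof is correct, and it takes a genuinely different route from the paper on the one substantive point, namely that the arcs cut $S$ into polygons with exactly one boundary edge apiece. The paper reduces this, via \cite[Proposition~1.11]{APS19}, to two conditions: that the arcs do not enclose a subsurface with no boundary segment on its boundary, and that $\mathbb{A}$ has the maximal number $m+b+2g-2$ of arcs. The first is then deduced from the total order on the arcs (a cycle of arcs bounding such a subsurface would force $i_1<i_2<\cdots<i_s<i_1$), and the second from the Riemann--Hurwitz formula applied to the closed-up covering $\widetilde S\to\mathbb P^1$. Your cut-and-paste reconstruction of $S$ from $m$ copies of the cut disk $\widehat D$ instead exhibits the complementary pieces explicitly: one polygon per sheet, each a disk obtained from $\widehat D$ by zipping the adjacent edge pairs fixed by the local monodromy at the corresponding branch points, and each retaining exactly the single edge over $\partial D$ as its unique boundary segment. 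This is more self-contained --- no appeal to the dissection criterion of \cite{APS19} or to Riemann--Hurwitz --- and even recovers the arc count $n=m+b+2g-2$ as a by-product of the resulting CW structure, at the cost of having to set up the branch-cut dictionary carefully (in particular, checking that the gluing across $b_i$ is governed by the monodromy transposition $\tau_i$ and that the two unzipped edges $f_i^{(k)},e_i^{(k)}$ of a cut piece assemble into one full copy of $a_i$, both of which you do correctly). The remaining verifications --- embeddedness of each $a_i$ with two distinct endpoints in $M$, intersections confined to lifts of $-1$, and the ordering condition via the orientation-preserving local homeomorphism at each marked point --- coincide in substance with the paper's arguments.
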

\begin{proof}
Since $a_i$ is a lift of $b_i$, which has no self-intersection,  each $a_i$ is non-self-intersecting (even at the endpoints).
Since $b_i\cap b_j=-1$ for $i\neq j$, $a_i$ and $a_j$ meet at most at the endpoints. Furthermore, since each $b_{i+1}$ follows $b_i$ counterclockwise at the basepoint $-1$, if $a_j$ follows $a_i$ counterclockwise at a common endpoint, which is a lift of $-1$, then $i<j$.

Now we have to show that $a_i$ cut $S$ into polygons each of which has exactly one boundary edge. By \cite[Proposition 1.11]{APS19}, this is equivalent to the following two conditions:
\begin{enumerate}
\item every subsurface enclosed by the arcs $a_i$ contains at least one boundary segment;
\item $\mathbb{A}$ is maximal, that is, $\mathbb{A}$ has exactly $m+b+2g-2$ arcs.
\end{enumerate}
The first condition follows from the fact that there is a total order on $\mathbb{A}$. For contradiction, assume that there is a subsurface enclosed by arcs in $\mathbb{A}$, which has no boundary segment of $S$ on its boundary. Let $c_1=a_{i_1}, c_2=a_{i_2},\ldots,c_s=a_{i_s}$ be the arcs in $\mathbb{A}$ forming the boundary of such a subsurface, where we label the arcs counterclockwise, that is, $c_{i+1}$ follows $c_i$ counterclockwise at the common endpoint, with the subscript modulo $s$. Since there is no boundary segment between these arcs, they form a cycle, and we have $i_1 < i_2 < \ldots < i_s < i_1$, which is a contradiction.

The second condition follows from the classical Riemann-Hurwitz formula.
Denote by $\widetilde{S}$ the surface obtained from $S$ by shrinking each connected component of the boundary into a point. Then the branched covering $\mathfrak{p}$ can be restricted to a branched covering from $\widetilde{S}$ to $\mathbb{P}^1$, where $\mathbb{P}^1$ is obtained from $D$ by shrinking the boundary to a point $p_{n+1}$. We still denote this branched covering by $\mathfrak{p}$. Then there are exactly $b$ points in the fiber $\mathfrak{p}^{-1}(p_{n+1})$, which are the points coming from shrinking the boundary components of $S$.

The Riemann-Hurwitz formula states that
\begin{equation}\label{eq:RieHur}
\chi(S)=m\chi(\mathbb{P}^1)-\sum (\deg(\mathfrak{p};x)-1)
\end{equation}
where $\chi$ denotes the Euler characteristic of the surfaces and the summation extends over all singular points $x$ of $\mathfrak{p}$.
Since $\mathfrak{p}$ is $m$-sheeted, the sum of $\deg(\mathfrak{p};x)$ over the fiber of each branched point in $\mathbb{P}^1$ equals $m$.
In particular, we have
\begin{equation}
\sum (\deg(\mathfrak{p};x)-1)=m-b
\end{equation}
where the summation extends over all the $b$ singular points in $\mathfrak{p}^{-1}(p_{n+1})$. On the other hand, for each singular point $x$ in $\mathfrak{p}^{-1}(p_{i}), 1\leqslant i\leqslant n,$ $\deg(\mathfrak{p};x)=2$, thus
\begin{equation}
\sum (\deg(\mathfrak{p};x)-1)=n
\end{equation}
where the summation extends over all the $n$ singular points in $\mathfrak{p}^{-1}(p_{i})$, $1\leqslant i\leqslant n$.

At last, note that $\chi(\widetilde{S})=\chi(S)=2-2g$, and $\chi(\mathbb{P}^1)=2$, thus by the equalities above, we have
$n=m+b+2g-2$.
So $\mathbb{A}$ is maximal, and we are done.
\end{proof}

The next proposition states that the above construction is compatible with our notion of equivalence.

\begin{proposition}
Let $(S,M)$ be a marked surface and let $(\mathfrak p_i,\mathbb B_i)$, $i=0,1$, be two equivalent branched coverings $\mathfrak p_i:(S,M)\to D$ together with choice of matching paths $\mathbb B_i$.
Then the associated exceptional dissections $\mathbb A_i:=\mathfrak p_i^{-1}(\mathbb B_i)$, $i=0,1$, are equivalent (i.e. isotopic).
\end{proposition}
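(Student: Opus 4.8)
The plan is to show that a continuous family of branched coverings with matching paths induces, by taking preimages, a continuous family of exceptional dissections, which by definition is an isotopy. Given the equivalence $(\mathfrak p_t,\mathbb B_t)$, $t\in[0,1]$, interpolating between $(\mathfrak p_0,\mathbb B_0)$ and $(\mathfrak p_1,\mathbb B_1)$, I would set $\mathbb A_t:=\mathfrak p_t^{-1}(\mathbb B_t)$ and argue that $t\mapsto \mathbb A_t$ is a continuous (in fact smooth) family of exceptional dissections on $(S,M)$. By Proposition~\ref{prop:exc} each $\mathbb A_t$ is an exceptional dissection, so the only thing to check is that the family varies continuously with $t$ and fixes the endpoints, since then restricting to $t\in[0,1]$ gives the required isotopy between $\mathbb A_0$ and $\mathbb A_1$.

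The first step is to make precise what "continuous family" means for the data $(\mathfrak p_t,\mathbb B_t)$: the maps $\mathfrak p_t:S\to D$ vary smoothly, their branch points $p_1(t),\dots,p_n(t)$ trace out smooth paths in the interior of $D$, and the matching paths $b_i(t)$ vary smoothly with $b_i(t)$ connecting $-1$ to $p_i(t)$ and satisfying the three defining conditions for all $t$. The second step is the local lifting statement: near any point of $D$ that is not a branch point, $\mathfrak p_t$ is a genuine covering varying smoothly in $t$, so the unique lift of a smoothly varying path varies smoothly; near a branch point $p_i(t)$, one uses the local normal form of a simple branched covering (in suitable coordinates $z\mapsto z^2$ on the ramified sheet, identity elsewhere), which can be chosen to depend smoothly on $t$, to see that the lift of $b_i(t)$ through the ramification point also varies smoothly. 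Patching these local pictures along $b_i(t)$ shows each arc $a_i(t)=\mathfrak p_t^{-1}(b_i(t))$ is a smoothly varying embedded arc. Because $\mathfrak p_t^{-1}(-1)=M$ for all $t$ (this is part of the definition of a branched covering $(S,M)\to D$), the endpoints of each $a_i(t)$ lie in $M$ and, $M$ being finite, are constant in $t$ — so the isotopy fixes endpoints, as required.

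The main obstacle is the behavior of the lifts at and near the ramification points as $t$ varies, i.e. promoting the pointwise statement of Proposition~\ref{prop:exc} to a statement about families. Away from ramification this is the standard fact that covering-space lifts depend continuously on parameters (uniqueness of lifts plus compactness of $[0,1]$), which is routine. At a ramification point one must produce, uniformly in $t$, local coordinates in which $\mathfrak p_t$ looks like $z\mapsto z^2$; this follows because a simple branch point is stable and the normal form can be obtained by an isotopy of coordinate charts depending smoothly on $t$ (one can, for instance, invoke the parametrized version of the local structure of Lefschetz-type/branched singularities, or simply the implicit function theorem applied to the family $\mathfrak p_t$ near the nondegenerate critical point). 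Once the uniform local model is in place, the lift of the smoothly moving path $b_i(t)$ through the critical point is manifestly smooth in $t$, and gluing with the off-ramification lifts — which agree on overlaps by uniqueness — yields the smooth family $a_i(t)$. Collecting the $a_i(t)$ gives the smooth family $\mathbb A_t$ and hence the desired isotopy $\mathbb A_0\simeq \mathbb A_1$.
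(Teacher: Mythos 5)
Your proposal follows exactly the same route as the paper: the paper's proof simply observes that the interpolating family $(\mathfrak p_t,\mathbb B_t)$ yields a path of exceptional dissections $\mathbb A_t:=\mathfrak p_t^{-1}(\mathbb B_t)$, which is the desired isotopy. The only difference is that you spell out the continuity of the lifts (covering-space lifting away from branch points, the local model $z\mapsto z^2$ at ramification points), which the paper leaves implicit; this is a correct and welcome elaboration, not a different argument.
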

\begin{proof}
By definition, there is a continuous path $(\mathfrak p_t,\mathbb B_t)$, $t\in[0,1]$, interpolating between the two.
This gives rise to a path of exceptional dissections $\mathbb A_t:=\mathfrak p_t^{-1}(\mathbb B_t)$, proving the claim.
\end{proof}

By the above two propositions, we can define a map from $\mathfrak{S}_2$ to $\mathfrak{S}_1$ as follows:

\begin{proposition-definition}\label{prop-def:phi1}
There is a well-defined map $\Phi: \mathfrak{S}_2\rightarrow \mathfrak{S}_1$, which maps a branched covering $\mathfrak p$ with matching paths $\mathbb{B}$ to the exceptional dissection $\mathfrak{p}^{-1}(\mathbb{B})$.
\end{proposition-definition}

\subsection{From exceptional dissections to branched coverings}

The following proposition shows that $\Phi$ is surjective.

\begin{proposition}\label{prop:branchedconstruction}
Let $(S,M)$ be a marked surface and $\mathbb A=(a_1,\ldots,a_n)$ an exceptional dissection on it.
Then there exists a branched covering $\mathfrak p:(S,M)\to D$ together with matching paths $\mathbb B$ such that $\mathbb A= \mathfrak p^{-1}(\mathbb B)$.
\end{proposition}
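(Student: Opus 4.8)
The plan is to construct the branched covering $\mathfrak p$ directly from the combinatorics of the exceptional dissection $\mathbb A$, by first building the base disk $D$ with its branch points and matching paths, and then recovering $S$ as the unique covering space with the prescribed monodromy. Concretely, I would start by choosing a disk $D$ with $n = m+b+2g-2$ interior points $p_1,\dots,p_n$ arranged so that a collection of disjoint matching paths $\mathbb B = (b_1,\dots,b_n)$ exists with $b_{i+1}$ following $b_i$ counterclockwise at $-1$ (for instance place the $p_i$ on a small arc and let the $b_i$ be nearly-radial arcs, as in Figure~\ref{fig:matchingpaths}). This data is completely rigid up to the equivalence defined above, so the only real content is to produce the covering surface together with an identification with $(S,M)$.

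Next I would encode the covering by its monodromy. Cutting $D$ open along $\mathbb B$ produces a disk, and the complement $D \setminus \bigcup b_i$ is simply connected; a degree-$m$ simple branched covering with branch locus $\{p_1,\dots,p_n\}$ is then the same thing as an assignment of a transposition $\tau_i \in \mathfrak S_m$ to each small loop encircling $p_i$, subject only to the requirement that the total monodromy around $\partial D$ is trivial and (for connectedness of the total space) that the $\tau_i$ generate a transitive subgroup. To read off the correct transpositions from $\mathbb A$, I would use the combinatorics of the dissection: label the $m$ polygons of $S\setminus \mathbb A$ (equivalently, think of the sheets as the polygons, since each polygon has exactly one boundary edge and so maps homeomorphically onto $D$), and observe that the arc $a_i$ is glued between exactly two polygons (or a polygon to itself, along the two preimages of $b_i$); let $\tau_i$ be the transposition swapping those two sheet-labels. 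The hypothesis that $\mathbb A$ cuts $S$ into polygons each with one boundary edge guarantees $|M| = m$ sheets, and the genus/boundary count $n = m+b+2g-2$ is exactly the Riemann--Hurwitz count from Proposition~\ref{prop:exc}, so the Euler characteristics will match.

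With $(\tau_1,\dots,\tau_n)$ in hand, standard covering space theory produces a unique (up to isomorphism) connected branched covering $\mathfrak p: S' \to D$ with these transpositions as monodromy around the $p_i$; the matching paths $b_i$ lift to arcs $a_i' = \mathfrak p^{-1}(b_i)$ forming an exceptional dissection $\mathbb A'$ on $(S',M')$ by Proposition~\ref{prop:exc}, where $M' = \mathfrak p^{-1}(-1)$. It then remains to exhibit a diffeomorphism of marked surfaces $(S',M') \to (S,M)$ carrying $\mathbb A'$ to $\mathbb A$. This I would do by constructing it polygon by polygon: the complement $S' \setminus \mathbb A'$ is a disjoint union of polygons indexed by the sheets, each mapping homeomorphically to $D$ via $\mathfrak p$, and the gluing pattern along the arcs is dictated precisely by the transpositions $\tau_i$, which by construction coincides with the gluing pattern of the polygons of $S\setminus \mathbb A$ along the arcs of $\mathbb A$; matching these identifications and using the ordering condition to pin down the counterclockwise arrangement at each marked point gives the desired diffeomorphism.

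The main obstacle, I expect, is the bookkeeping in the last step: one must check that the cyclic order of arcs and boundary edges around each vertex of the dissection is reproduced correctly by the lifted dissection $\mathbb A'$, and that this is forced by the ordering axiom $i<j$ whenever $a_j$ follows $a_i$ counterclockwise — essentially one needs the local model of the covering near each ramification point and near each lift of $-1$ to be the standard one, which is where the \emph{simplicity} of the covering (only two sheets coming together) is used. Once the local pictures are pinned down, globalizing the homeomorphism is a routine patching argument, and the compatibility $\mathbb A = \mathfrak p^{-1}(\mathbb B)$ holds by construction. I would also remark that transitivity of $\langle \tau_1,\dots,\tau_n\rangle$ follows from connectedness of $S$ together with the fact that the arcs and boundary edges connect all the polygons, so the covering one builds is automatically connected, matching our standing assumption on $(S,M)$.
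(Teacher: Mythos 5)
Your overall strategy --- read off a sequence of transpositions from $\mathbb A$, build the branched cover of $D$ with that monodromy, and then match it to $(S,M,\mathbb A)$ polygon by polygon --- is essentially the paper's (the paper's cut-and-reglue of $M\times D$ along cuts from the branch points is precisely the construction of the cover with prescribed monodromy). However, your prescription for the transpositions is wrong. The polygons of $S\setminus\mathbb A$ are \emph{not} the sheets of the covering and do not map homeomorphically onto $D$: the arc $a_i$ maps two-to-one onto $b_i$, folding at the ramification point, so $\mathfrak p$ restricted to a closed polygon is not injective along its arc-edges. The correct monodromy around $p_i$, acting on the fiber $\mathfrak p^{-1}(-1)=M$, is the transposition of the two \emph{endpoints} of $a_i$, not of the two polygons adjacent to $a_i$, and these two prescriptions are genuinely non-conjugate in general. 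Concretely, take $S=D$ with $M=\{x_1,x_2,x_3,x_4\}$ in counterclockwise order and the ``star'' dissection with $a_i$ joining $x_1$ to $x_{i+1}$: the endpoint transpositions are $(x_1x_2),(x_1x_3),(x_1x_4)$, while the adjacent-polygon transpositions form a path $(Q_1Q_2),(Q_2Q_3),(Q_3Q_4)$. No relabelling of the four sheets conjugates one sequence into the other, since the first sequence has a letter common to all three transpositions and the second does not. The cover you build from the polygon data therefore carries a lifted dissection whose Hurwitz system is the path, so its arc-graph has no vertex of valence three, and no diffeomorphism can carry it to the star dissection $\mathbb A$. So, taken literally, your construction produces the wrong covering.

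Even after replacing the adjacency transpositions by the endpoint transpositions, the final identification of $(S',M',\mathbb A')$ with $(S,M,\mathbb A)$ is where the real content lies, and you only gesture at it. Knowing which pair of sheets each arc joins does not by itself determine the gluing pattern: one must also verify that the cyclic order of arc-ends at each marked point agrees on both sides. This is exactly where the ordering axiom enters --- at every vertex the incident arcs must appear with increasing indices counterclockwise between the two boundary segments, so the cyclic orders, i.e.\ the ribbon structure on the graph $\Gamma=\bigcup a_i$, are forced by the indices on both $S$ and $S'$; an isomorphism of ribbon graphs then extends over the complementary regions because each is a polygon with exactly one boundary edge. You correctly flag this as ``the main obstacle'' but leave it as a routine patching claim, so as written the proposal both starts from the wrong monodromy data and omits the essential verification.
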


\begin{proof}
Choose branch points $p_1,\ldots,p_n$ and matching paths $\mathbb B=\{b_1,\ldots,b_n\}$ in $D$.
Assume this is done in such a way that the straight horizontal path $c_i$ from $p_i$ to $\partial D$ does not intersect any of the $b_i$'s away from $p_i$ (see Figure~\ref{fig:diskwithcuts}).

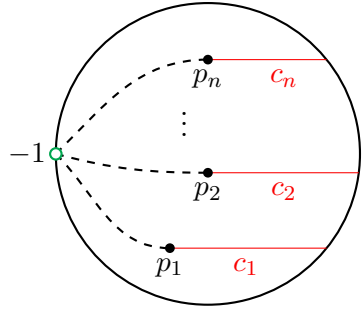
\begin{figure}[h]
\begin{center}{
\begin{tikzpicture}[scale=0.5]
\draw[thick,fill=white] (0,0) circle (4cm);		

\path (-4,0) coordinate (-1)
      (-1,-2.5) coordinate (p1)
      (0,-.5) coordinate (p2)
      (-1,1) coordinate (p3)
      (0,2.5) coordinate (p4);
\draw (-1) node[left] {$-1$};
\draw (p1) node[below] {$p_1$};
\draw (p2) node[below] {$p_2$};
\draw (p3) node[right] {$\vdots$};
\draw (p4) node[below] {$p_n$};

\draw[dashed,thick] (-1) to[out=-45,in=180] (p1);
\draw[dashed,thick] (-1) to[out=-15,in=180] (p2);
\draw[dashed,thick] (-1) to[out=45,in=180] (p4);

\draw[Green,thick,fill=white] (-1) circle (.15cm);	

\draw[red] (p1) to (3.12,-2.5);
\draw (1,-2.5) node[below] {$\color{red} c_1$};
\draw[red] (p2) to (3.97,-.5);
\draw (2,-.5) node[below] {$\color{red} c_2$};
\draw[red] (p4) to (3.12,2.5);
\draw (2,2.5) node[below] {$\color{red} c_n$};

\draw[thick,fill] (p1) circle (.1cm)
(p2) circle (.1cm)
(p4) circle (.1cm);	
\end{tikzpicture}}
\end{center}
\begin{center}
\caption{Copy of the disk $D$ cut along paths $c_1,c_2,\ldots,c_n$ avoiding the matching paths (dashed).}\label{fig:diskwithcuts}
\end{center}
\end{figure}

We construct a surface $S'$ from $M\times D$ by cutting and gluing:
\begin{enumerate}
\item Cut $\{x\}\times D$ along all paths $c_i$ where $i$ is such that $a_i$ ends at $x$.
\item Glue $\{x\}\times D$ to $\{y\}\times D$ along the cuts, where $\{x,y\}$ are the endpoints of $a_i$.
\end{enumerate}
By construction, $S'$ comes with a map $\mathfrak p':S'\to D$ which is a simple branched covering with branch points $p_1,\ldots,p_n$.
Over $p_i$, the two sheets corresponding to the endpoints of $a_i$ come together.
Furthermore, if $M':=\mathfrak p'^{-1}(-1)\subset\partial S'$ (which is canonically identified with $M$), then $(S',M')$ is a marked surface with exceptional dissection $\mathbb A':=\mathfrak p'^{-1}(\mathbb B)=(a_1',\ldots,a_n')$.

We construct a diffeomorphism of marked surfaces $\varphi:(S,M)\to (S',M')$ as follows.
First, $\varphi|M$ is defined by the canonical bijection $M\cong M'$.
Let $\Gamma$ (resp. $\Gamma'$) be the graph formed by the union of the $a_i$ (resp $a_i'$).
Then extend $\varphi$ to $\Gamma$ is by sending $a_i$ to $a_i'$ (uniquely, up to reparametrization along each arc).
We can do this, since the endpoints of $a_i$ and $a_i'$ are by construction the same, once we identify $M$ and $M'$.
Also note that $\varphi|\Gamma$ is an isomorphism of ribbon graphs (i.e. extends to a map from a neighbourhood of $\Gamma$ to a neighbourhood of $\Gamma'$), since the ribbon structure corresponds to the order of the arcs in both cases.
Finally, extend $\varphi$ to all of $S$ by mapping each polygon in $S$ cut out by $\mathbb A$ to the corresponding polygon in $S'$ cut out by $A'$.

Finally, set $\mathfrak p:=\mathfrak p'\circ \varphi$, then $\mathfrak p:(S,M)\to D$ is a branched covering and $\mathfrak p^{-1}(\mathbb B)=\varphi^{-1}(\mathbb A')=\mathbb A$, which completes the proof.
\end{proof}

The following proposition shows that $\Phi$ is injective, thus, since we already know that $\Phi$ is surjective, a bijection.

\begin{proposition}
Let $(S,M)$ be a marked surface and suppose that two pairs $(\mathfrak p_i,\mathbb B_i)$, $i=0,1$, consisting of a branched covering $\mathfrak p_i$ and a choice of matching paths $\mathbb B_i$ give rise to equivalent exceptional dissections $\mathbb A_i=\mathfrak p_i^{-1}(\mathbb B_i)$.
Then $(\mathfrak p_0,\mathbb B_0)$ is equivalent to $(\mathfrak p_1,\mathbb B_1)$.
\end{proposition}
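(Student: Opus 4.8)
The plan is to show that the data $(\mathfrak p_i, \mathbb B_i)$ is determined, up to the continuous deformation defining equivalence in $\mathfrak S_2$, by the exceptional dissection $\mathbb A_i = \mathfrak p_i^{-1}(\mathbb B_i)$. The starting observation is that the dissection determines the combinatorial ``gluing pattern'' on the disk: away from the branch points, $\mathfrak p_i$ is an $m$-sheeted covering, and on the complement of the union of the matching paths $\mathbb B_i$ (a disk, or rather a disk cut along the $b_j$'s, which is contractible), the covering is trivial, so it is classified by how the sheets are permuted across each path $b_j$ --- and for a simple branched covering this is exactly the transposition recording the two endpoints of $a_j$ in $M$. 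Thus the construction in the surjectivity proposition above is essentially the \emph{only} way to build $(\mathfrak p_i, \mathbb B_i)$ from $\mathbb A_i$: given $\mathbb A_i$, one recovers $(\mathfrak p_i, \mathbb B_i)$ up to equivalence as the covering $\mathfrak p'$ built in that proof.

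Concretely, I would argue as follows. First, using the equivalence (isotopy) between $\mathbb A_0$ and $\mathbb A_1$, reduce to the case $\mathbb A_0 = \mathbb A_1 =: \mathbb A$ on the nose: an isotopy of dissections lifts, essentially by the same covering-space argument used to build $\Phi$, to a continuous family of pairs, so after deforming we may assume the two dissections coincide. Second, deform the matching paths: the configurations $(D; p_1^{(i)}, \ldots, p_n^{(i)}; \mathbb B_i)$ of $n$ marked points in the disk together with a system of matching paths to the basepoint $-1$ form a contractible space (it is the space of ``cacti'' or equivalently a $K(\mathrm{pt})$, since any two such systems are isotopic rel $-1$ compatibly with moving the $p_j$), so there is a continuous path from $(p^{(0)}, \mathbb B_0)$ to $(p^{(1)}, \mathbb B_1)$; lifting this path through the coverings (which we can do coherently because the covering is locally trivial over the moving cut-open disk) produces a continuous family $(\mathfrak p_t, \mathbb B_t)$ with $\mathbb A_t = \mathbb A$ throughout. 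Third, now that both the base data agree, observe that a simple branched covering of the cut-open disk with prescribed monodromy transpositions and prescribed fiber $M$ over $-1$ is unique up to a deck-type isomorphism that is the identity on $M$; since the monodromy transpositions are read off from $\mathbb A$ and the identification of the fiber with $M$ is part of the structure, $\mathfrak p_0$ and $\mathfrak p_1$ are related by a fiber-preserving diffeomorphism isotopic to the identity, which can be absorbed into the continuous family. Concatenating the three families gives the desired equivalence $(\mathfrak p_0,\mathbb B_0)\sim(\mathfrak p_1,\mathbb B_1)$.

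The main obstacle I expect is the second step: carefully setting up the ``lifting of paths of configurations-with-matching-paths through the family of branched coverings'' and checking it is well-defined and continuous. The subtlety is that the branch points move, so one is not lifting in a fixed covering but in a family, and one must be sure the lift of the matching paths stays attached to the (moving) ramification points and that the total space of the family is a trivial bundle over the parameter interval --- this is where a clean statement, perhaps phrased as ``the space of triples $(\mathfrak p, \mathbb B, \text{identification } \mathfrak p^{-1}(-1)\cong M)$ inducing a fixed dissection $\mathbb A$ is connected'' (indeed contractible), does the real work, and one should either prove it directly by an Alexander-trick-style argument on the cut-open disk or cite the corresponding Birman--Hilden / Hurwitz-space statement. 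The first and third steps are comparatively routine: the first is the same lifting argument already implicit in the proof that $\Phi$ is well-defined, and the third is the standard classification of covers of a contractible space by their monodromy.
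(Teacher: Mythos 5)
Your proof is correct and follows the same three-step skeleton as the paper's: normalize the dissections so that $\mathbb A_0=\mathbb A_1$, normalize the matching paths so that $\mathbb B_0=\mathbb B_1$, and then show the two coverings themselves agree up to the allowed deformation. The differences are in execution. For your second step, the ``lifting of paths of configurations through a family of branched coverings'' that you flag as the main obstacle is not needed: the paper simply takes a diffeomorphism $\psi:D\to D$ isotopic to the identity with $\psi(\mathbb B_0)=\mathbb B_1$ (isotopy extension on the disk) and replaces $\mathfrak p_0$ by $\psi\circ\mathfrak p_0$; postcomposing with the ambient isotopy $\psi_t$ already produces the required continuous family, with nothing to lift. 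For your third step, the paper avoids the monodromy classification of covers altogether and instead normalizes $\mathfrak p_0$ directly: first adjust by a diffeomorphism of $S$ isotopic to the identity so that $\mathfrak p_0=\mathfrak p_1$ along each arc $a_i$, then fix up the interiors of the polygons cut out by $\mathbb A$ (an Alexander-trick-type step). Your route via ``a cover of the cut-open disk is determined by its monodromy and the fiber identification over $-1$'' is also fine, but you should justify the one assertion doing real work there: that the resulting isomorphism $\varphi$ with $\mathfrak p_1\circ\varphi=\mathfrak p_0$ and $\varphi|_M=\mathrm{id}$ is isotopic to $\mathrm{id}_S$, so that it can be absorbed into the family. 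This follows because $\varphi$ preserves $\mathbb A$ (each arc setwise, endpoints pointwise) together with its ribbon structure, and the union of the arcs is a spine of $S$ --- the same argument the paper uses later to show $\mathrm{Mod}(S)$ acts freely on exceptional dissections.
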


\begin{proof}
Since $\mathbb A_0$ is equivalent to $\mathbb A_1$, there is a diffeomorphism $\varphi$, isotopic to the identity, with $\varphi(\mathbb A_0)=\mathbb A_1$.
Then replacing $\mathfrak p_1$ by $\mathfrak p_1\circ\varphi$, which does not change the equivalence class of $(\mathfrak p_1,\mathbb B_1)$, we can assume that $\mathbb A_0=\mathbb A_1=(a_1,\ldots,a_n)$.

Next, we can find a diffeomorphism $\psi:D\to D$, isotopic to the identity, with $\psi(\mathbb B_0)=\mathbb B_1$.
Thus, replacing $\mathfrak p_0$ by $\psi\circ\mathfrak p_0$ and $\mathbb B_0$ by $\mathbb B_1$, which does not change the equivalence class, we can assume that $\mathbb B_0=\mathbb B_1=(b_1,\ldots,b_n)$.

Note that both $\mathfrak p_0$ and $\mathfrak p_1$ map $a_i$ to $b_i$, but possibly with different parametrization.
Composing $\mathfrak p_0$ with another diffeomorphism $S\to S$ isotopic to the identity, we can assume that $\mathfrak p_0=\mathfrak p_1$ along each $a_i$.
The $\mathfrak p_i$'s can still differ in the interior of the polygons cut out by $\mathbb A$, but composing $\mathfrak p_0$ with another diffeomorphism $S\to S$ isotopic to the identity and fixing $\mathbb A$, we can achieve $\mathfrak p_0=\mathfrak p_1$ everywhere.
\end{proof}

This completes the first half of the proof of Theorem~\ref{main theorem bijections}, showing that there is a canonical bijection between $\mathfrak S_1$ and $\mathfrak S_2$.

\subsection{Hurwitz systems}

The following notion was considered by Hurwitz in~\cite{Hur1891}.

\begin{definition}
Fix a finite set $M$.
A \emph{Hurwitz system} is an ordered set of $n$ transpositions $\tau=(\tau_1, \tau_2, \ldots, \tau_n)$ of $M$, which generate the whole symmetric group of $M$.
\end{definition}

A marked surface $(S,M)$ with exceptional dissection $\mathbb A=(a_1,\ldots a_n)$ gives rise to a Hurwitz system $\tau=(\tau_1,\ldots,\tau_n)$ of $M$ where $\tau_i$ is the transposition of the two endpoints of $a_i$.
The fact that the $\tau_i$'s generate the full symmetric group of $M$ follows from the connectedness of $S$:

\begin{lemma}
The ordered set $\tau=(\tau_1,\tau_2,\ldots,\tau_n)$ is a Hurwitz system.
\end{lemma}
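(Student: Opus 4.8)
The plan is to show that the subgroup $H \subseteq \mathrm{Sym}(M)$ generated by the transpositions $\tau_1,\ldots,\tau_n$ is all of $\mathrm{Sym}(M)$, and the key input is the connectedness of $S$. First I would recall the standard fact that a set of transpositions of $M$ generates $\mathrm{Sym}(M)$ if and only if the associated ``transposition graph'' $G$ is connected: this is the graph with vertex set $M$ and an edge joining $x$ and $y$ for every $\tau_i$ that transposes $x$ and $y$. (One direction is elementary; for the other, if $G$ were disconnected the subgroup $H$ would preserve the partition of $M$ into connected components and hence be proper.) So it suffices to prove that the transposition graph $G$ of $\tau$ is connected.

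Now observe that, by construction, $G$ is nothing but the graph $\Gamma$ obtained from the union $a_1\cup\cdots\cup a_n$ by collapsing each arc $a_i$ to a single edge: the vertices of $\Gamma$ are the points of $M$ and the edges are the arcs $a_i$, each joining its two endpoints (which are distinct, since an exceptional dissection consists of arcs with distinct endpoints). Thus the abstract graph underlying $\Gamma$ has exactly $|M|=m$ vertices and $n$ edges, and its edge-endpoint incidences are precisely those recorded by the transpositions $\tau_i$. Hence connectedness of $G$ is equivalent to connectedness of $\Gamma$.

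Finally I would argue that $\Gamma$ is connected because $\mathbb{A}$ is an exceptional dissection of the connected surface $S$. Indeed, the arcs in $\mathbb{A}$ cut $S$ into polygons, so $S$ is obtained by gluing these polygons along edges lying in $\Gamma$ (together with boundary edges of $S$). If $\Gamma$ were disconnected, write $\Gamma = \Gamma' \sqcup \Gamma''$ as a disjoint union of nonempty subgraphs; then the polygons of the dissection would split into those whose non-boundary edges all lie in $\Gamma'$ and those whose non-boundary edges all lie in $\Gamma''$, and since distinct polygons are only ever glued to one another along edges of $\Gamma$, this would exhibit $S$ as a disjoint union of two nonempty closed subsurfaces, contradicting connectedness of $S$. (Equivalently: a path in $S$ between a marked point of $\Gamma'$ and a marked point of $\Gamma''$ crosses the arcs of $\mathbb{A}$ finitely many times and can be pushed, within a polygon, onto $\Gamma$, producing a path in $\Gamma$ connecting the two, a contradiction.) Therefore $\Gamma$, hence $G$, is connected, so $H = \mathrm{Sym}(M)$ and $\tau$ is a Hurwitz system.

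The only slightly delicate point is the last step, the passage from connectedness of the surface $S$ to connectedness of the one-complex $\Gamma$ via the polygonal decomposition; everything else is the standard combinatorics of transposition generators. One clean way to make it rigorous is to note that $S$ deformation retracts onto $\Gamma$ together with arcs running from $\Gamma$ out to the boundary (the dual spine of the polygonal decomposition meeting $\partial S$), so $\pi_0(S) \twoheadrightarrow \pi_0(\Gamma)$; since $S$ is connected and $\Gamma$ is nonempty (as $n \geq 1$ whenever $m+b+2g-2 \geq 1$, which holds in all relevant cases), $\Gamma$ is connected.
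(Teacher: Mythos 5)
Your proof is correct, and it reaches the same core fact as the paper --- connectedness of $S$ forces the arcs of $\mathbb{A}$ to link up all of $M$ --- but it packages the group theory differently and is more self-contained on the topological step. The paper argues by producing every transposition of $M$ explicitly: it uses the conjugation identity $\tau_i\tau_j\tau_i=(x_i,y_j)$, whose geometric counterpart is the smoothing of two arcs at a shared endpoint, and then invokes the fact that on a connected surface any arc is obtained by smoothing a sequence of arcs of an admissible dissection. You instead invoke the classical criterion that transpositions generate $\mathrm{Sym}(M)$ exactly when the transposition graph is connected, identify that graph with the $1$-complex $\Gamma=a_1\cup\cdots\cup a_n$, and prove connectedness of $\Gamma$ directly from the polygonal decomposition (or from the fact that $S$ deformation retracts onto $\Gamma$, which the paper itself uses later in the proof of its corollary on the free action of $\mathrm{Mod}(S)$). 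The two routes are logically equivalent --- conjugating along a path in the transposition graph is precisely how one proves the graph criterion --- but your version makes explicit, rather than outsourcing to the admissible-dissection literature, the one step where connectedness of $S$ actually enters; the paper's version has the advantage of exhibiting the geometric meaning (arc smoothing) of the algebraic manipulation. One small point worth keeping in your writeup: your observation that no marked point can be isolated in $\Gamma$ (else the adjacent polygon would have two boundary edges) is what guarantees the surjection $\pi_0(S)\twoheadrightarrow\pi_0(\Gamma)$ argument applies to all of $M$, and it is needed for the graph criterion to give the full symmetric group on $M$ rather than on a subset.
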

\begin{proof}
We have to show that $\tau_i$'s generate the whole symmetric group of $M$.
For this, we show that any transposition of $M$ can be generated by $\tau_i$'s.
Let $\tau_i=(x_i,y_i)$.
Note that if $y_i=x_j$ and $x_i\neq y_j$, then $\tau_i\tau_j\tau_i=(x_i,y_j)$, and in this case, the smoothing of $a_i$ and $a_j$ at $y_i=x_j$ is an arc connecting $x_i$ and $y_j$.
On the other hand, since $S$ is connected and $\mathbb{A}$ is in partciular an admissible dissection, any arc (with any endpoints) can be obtained by smoothing a sequence of arcs in $\mathbb{A}$.
Thus any transposition of $M$ can be generated by $\tau_i$.
\end{proof}

By combining the above constructions, it follows that a branched covering $\mathfrak p:(S,M)\to D$ with matching paths $\mathbb B$ gives rise to a Hurwitz system where $\tau_i$ interchanges the two sheets which come together over the branch point $p_i$, and where we identify  sheets along $b_i$.

\begin{definition}
{A \emph{Hurwitz system of type $(S,M)$} is a Hurwitz system $\tau=(\tau_1,\ldots,\tau_n)$ arising from some exceptional dissection $\mathbb A$ on $(S,M)$ as above.}
\end{definition}

{By definition, we get a surjective map $\mathfrak S_1\to\mathcal H$, where $\mathcal H$ is the set of Hurwitz systems of type $(S,M)$ as before.
Moreover, this induces a map $\mathfrak S_1/\mathrm{Mod}(S)\to\mathcal H$, since elements of $\mathrm{Mod}(S)$ are required to fix $\partial S$.
To complete the proof of Theorem~\ref{main theorem bijections} we need to show that this map is injective. 
Suppose $\mathbb A$ and $\mathbb A'$ are two exceptional dissections on $(S,M)$ giving rise to the same Hurwitz system. 
Then as in the proof of Proposition~\ref{prop:branchedconstruction} on constructs a diffeomorphism $\varphi:(S,M)\to(S,M)$ fixing $\partial S$ and sending $\mathbb A$ to $\mathbb A'$.
Thus, both exceptional dissections define the same element in $\mathfrak S_1/\mathrm{Mod}(S)$.}

\section{Braid group actions}

Denote by $\mathfrak{B}_n$ the Artin braid group generated by $\sigma_1,\ldots,\sigma_{n-1}$
with relations
$\sigma_i\sigma_j=\sigma_j\sigma_i, |i-j|>1$ and
$\sigma_i\sigma_{i+1}\sigma_i=\sigma_{i+1}\sigma_i\sigma_{i+1}$.

In this section, we will introduce actions of  $\mathfrak{B}_n$ on the sets $\mathfrak{S}_1$, $\mathfrak{S}_2$ and {$\mathcal{H}$}, and show the following

\begin{theorem}\label{theorem:groupaction}
The braid group actions on $\mathfrak{S}_1$, $\mathfrak{S}_2$, and {$\mathcal{H}$} are compatible with the bijections established in Theorem \ref{main theorem bijections}.
\end{theorem}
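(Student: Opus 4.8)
The plan is to define the braid group action on each of the three sets in the same combinatorial way — as "mutation" of ordered tuples — and then observe that, because the bijections of Theorem~\ref{main theorem bijections} were constructed by passing through the exceptional dissection, it suffices to check that each mutation operation is intertwined with the mutation on $\mathfrak S_1$. First I would recall that the generator $\sigma_i$ of $\mathfrak B_n$ acts on an exceptional dissection $\mathbb A=(a_1,\dots,a_n)\in\mathfrak S_1$ by replacing the pair $(a_i,a_{i+1})$ with $(a_{i+1}',a_i)$, where $a_{i+1}'$ is the arc obtained from $a_i$ and $a_{i+1}$ by the appropriate smoothing (the geometric counterpart of the mutation of exceptional objects), and $\sigma_i^{-1}$ acts by replacing $(a_i,a_{i+1})$ with $(a_{i+1},a_i'')$; one checks this is again an exceptional dissection (the key point being that the total order and the "each polygon has exactly one boundary edge" condition are preserved, which follows as in the proof of Proposition~\ref{prop:exc}), and that $\sigma_i,\sigma_i^{-1}$ are mutually inverse and satisfy the braid relations, so this is a genuine $\mathfrak B_n$-action. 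Since $\mathfrak S_3$ is, as noted right before the statement, essentially identical to $\mathfrak S_1$ — a framed Hurwitz system is an exceptional dissection together with the induced transposition tuple — the action on $\mathfrak S_3$ is forced to be the classical Hurwitz move $\sigma_i\colon(\tau_1,\dots,\tau_n)\mapsto(\tau_1,\dots,\tau_{i-1},\tau_{i+1},\tau_i\tau_{i+1}\tau_i,\tau_{i+2},\dots,\tau_n)$ on the $\tau$-part, carried along with the mutation on the $\mathbb A$-part; compatibility with the $\mathfrak S_1\leftrightarrow\mathfrak S_3$ bijection is then immediate, and one only has to verify that the transposition attached to the smoothed arc $a_{i+1}'$ really is $\tau_i\tau_{i+1}\tau_i$ — this is the observation already made in the proof of the Hurwitz-system lemma, that smoothing $a_i,a_{i+1}$ at their common endpoint produces the arc whose endpoints are exchanged by $\tau_i\tau_{i+1}\tau_i$.

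The substantive content is the compatibility of the $\mathfrak S_2$-action with $\Phi\colon\mathfrak S_2\to\mathfrak S_1$. Here I would define the action of $\sigma_i$ on a branched covering with matching paths $(\mathfrak p,\mathbb B)$ by the "half-twist" of the disk: there is a compactly supported diffeomorphism $h_i\colon D\to D$ supported in a small disk containing only the arcs $b_i$ and $b_{i+1}$, which swaps the branch points $p_i$ and $p_{i+1}$ by rotating them counterclockwise and sends $(b_i,b_{i+1})$ to $(b_{i+1}',b_i)$ in the standard way; set $\sigma_i\cdot(\mathfrak p,\mathbb B)=(\mathfrak p,\,h_i\mathbb B)$, i.e. keep the covering but replace the matching paths by their images, after relabelling so that branch point $p_i$ is again $p_i$. (Equivalently, and this is the symmetric-monodromy picture, one keeps the matching paths fixed and postcomposes $\mathfrak p$ with the lift of $h_i^{-1}$; I would pick whichever of the two conventions makes signs cleanest and remark that they agree.) One then checks this is well defined on equivalence classes and defines a $\mathfrak B_n$-action, because the $h_i$'s are the standard generators of the mapping class group of the $n$-punctured disk and satisfy the braid relations there. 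Finally, to prove compatibility: $\Phi(\sigma_i\cdot(\mathfrak p,\mathbb B))=\mathfrak p^{-1}(h_i\mathbb B)$, and one must show this equals $\sigma_i\cdot\mathfrak p^{-1}(\mathbb B)=\sigma_i\cdot\mathbb A$. This is a purely local computation in a neighbourhood of $\mathfrak p^{-1}$ of the support of $h_i$: over that neighbourhood the covering restricts to the standard simple $2$-sheeted model, and one checks by hand that the preimage of the half-twisted path $b_{i+1}'$ is exactly the smoothing $a_{i+1}'$ of $a_i$ and $a_{i+1}$, while the preimage of $b_i$ is unchanged — i.e. the lift of the half-twist of the disk is the smoothing move on arcs. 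The counterclockwise convention in the definition of matching paths and of exceptional dissections is what makes the two "$\sigma_i$'s" match rather than being off by $\sigma_i\leftrightarrow\sigma_i^{-1}$, so I would be careful to track orientations throughout.

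The main obstacle I anticipate is precisely this last local model computation together with the bookkeeping of conventions: one has to pin down, once and for all, exactly which smoothing of $(a_i,a_{i+1})$ the geometric mutation produces (there are two smoothings at a common endpoint, and only one is an arc of the mutated dissection), verify that the half-twist $h_i$ lifts through the simple branched cover to that smoothing and not the other, and confirm that after the relabelling of branch points the induced map on $\mathfrak S_3$ is the Hurwitz move with $\tau_i\tau_{i+1}\tau_i$ in slot $i+1$ rather than slot $i$. None of these steps is deep, but getting all the counterclockwise/clockwise choices mutually consistent — so that the three actions are literally the same generator $\sigma_i$ under the bijections, not $\sigma_i$ versus $\sigma_i^{-1}$ — is where the real care is needed. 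Once the local picture is fixed, the global statement follows since all diffeomorphisms involved are compactly supported away from the rest of the configuration, and equivalence (isotopy) is manifestly respected by construction.
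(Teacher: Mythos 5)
Your strategy coincides with the paper's: the action on $\mathfrak S_3$ is transported from $\mathfrak S_1$, so compatibility there is definitional, and the substantive content is that $\Phi\colon\mathfrak S_2\to\mathfrak S_1$ intertwines the half-twist action on matching paths with the mutation of arcs, verified by lifting the twisted path through the covering (this is exactly Proposition~\ref{prop:braids1s2}). However, three of your details would fail as written. First, the induced Hurwitz move must place $\tau_{i+1}\tau_i\tau_{i+1}$, not $\tau_i\tau_{i+1}\tau_i$, in slot $i+1$: when $a_i$ and $a_{i+1}$ are disjoint we have $R_{a_{i+1}}a_i=a_i$, so the new transposition must be $\tau_i=\tau_{i+1}\tau_i\tau_{i+1}$, whereas your formula yields $\tau_{i+1}$. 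The two expressions agree only when the transpositions share a letter, which is why the computation you cite from the Hurwitz-system lemma (which treats exactly the shared-endpoint case) does not detect the discrepancy.

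Second, over the support of the half-twist the covering is not ``the standard simple $2$-sheeted model'': at each of $p_i$ and $p_{i+1}$ exactly two of the $m$ sheets are ramified, and the identification of the lift of the twisted path with $R_{a_{i+1}}a_i$ genuinely splits into three cases according to whether these two pairs of sheets are disjoint, share one sheet, or coincide --- equivalently, whether $a_i$ and $a_{i+1}$ share zero, one, or two endpoints --- with a different homotopy in each case (e.g.\ in the disjoint case one slides the twisted path back to $b_i$ through sheets unramified at $p_{i+1}$, while in the two-endpoint case the lift traverses $a_i$, then $a_{i+1}$, then $a_i$ again). This case analysis is the actual work of the proof and cannot be collapsed to a single local model. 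Third, your assignment of the new paths to slots ($b_{i+1}'$ in slot $i$ and the unchanged $b_i$ in slot $i+1$) lifts to a dissection of the form $(\ldots,L_{a_i}a_{i+1},a_i,\ldots)$, which is $\sigma_i^{-1}\mathbb A$ rather than $\sigma_i\mathbb A$ in the paper's conventions; you flag the $\sigma_i\leftrightarrow\sigma_i^{-1}$ ambiguity but it must actually be resolved, with $b_{i+1}$ occupying slot $i$ and the path wrapping around to the old position of $p_i$ occupying slot $i+1$.
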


This follows for $\mathfrak S_1$ and $\mathfrak S_2$ from Proposition~\ref{prop:braids1s2} below, and for $\mathfrak S_1$ and  {$\mathcal{H}$ from the discussion in Section~\ref{subsec:brhurwitz}}.

\subsection{Braid group action on exceptional dissections}

Let $(a_1,a_2)$ be an ordered exceptional pair in $(S,M)$, that is, $a_1$ and $a_2$ either do not intersect or they intersect at one or two common endpoints, and in this case $a_2$ follows $a_1$ counterclockwise at each common endpoint, as shown in Figure~\ref{figure:mutation of exceptional dissection}. The following definition is introduced in \cite{CS22}.

\begin{definition}\label{definition:mutations}
(1) Let $(a_1,a_2)$ be an ordered exceptional pair in $(S,M)$. We define \emph{the left (resp. right) mutation of $a_2$ at $a_1$} denoted by $L_{a_1}a_2$ (resp. $R_{a_2}a_1$) as follows:
\begin{itemize}
\item If $a_1$ and $a_2$ do not intersect, then we define $L_{a_1}a_2$ as $a_2$ and define $R_{a_2}a_1$ as $a_1$.
\item  If $a_1$ and $a_2$ share one endpoint $q_1$ then $L_{a_1}a_2$ is the smoothing of the crossing of $a_1$ and $a_2$  at $q_1$.
\item If $a_1$ and $a_2$ share both endpoints, then   $L_{a_1}a_2$ is obtained by first smoothing the crossing of  $a_1$ and $a_2$ at one of the endpoints and then smoothing the crossing of the resulting arc with $a_1$ at the other endpoint of $a_1$, see  Figure \ref{figure:mutation of exceptional dissection}.
\end{itemize}
Dually, we define $R_{a_2}a_1$ when $a_1$ and $a_2$ intersect in one or both endpoints, see  Figure \ref{figure:mutation of exceptional dissection}.

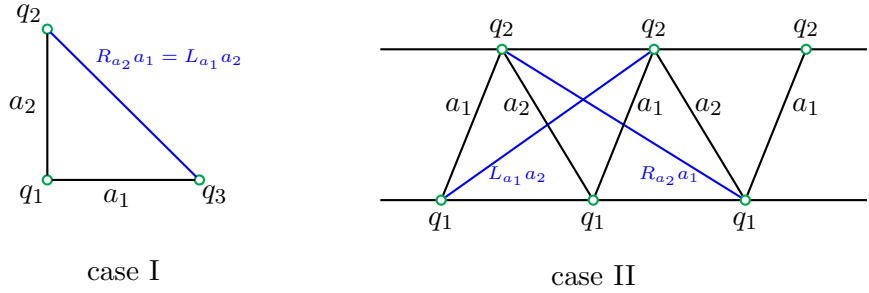
\begin{figure}[H]
\begin{center}
{\begin{tikzpicture}[scale=0.4]
\draw[thick,Green] (0,0) circle [radius=0.15];
\draw[thick,Green] (0,5) circle [radius=0.15];
\draw[thick,Green] (5,0) circle [radius=0.15];

\draw[thick,-] (0,0.15) -- (0,4.85);

\draw[thick,-] (0.15,0) -- (4.85,0);

\draw[thick,-,blue] (0.1,4.9) -- (4.9,0.1);

\draw (2.3,-.6) node {$a_1$};

\draw (-.8,2.5) node {$a_{2}$};
\draw [blue](4,4) node {\tiny $R_{a_2}a_1=L_{a_1}a_2$};
\draw (-.6,5.5) node {$q_2$};
\draw (-.5,-.5) node {$q_1$};
\draw (5.5,-.5) node {$q_3$};
%

\draw (2.5,-3) node {case I};
\draw (2.5,-3.5) node {};

\end{tikzpicture}}
\qquad\qquad
{\begin{tikzpicture}[scale=0.4]
\draw[thick,-,blue]  (-3,2.5)to(5,-2.5);
\draw[thick,-,blue]  (2,2.5)to(-5,-2.5);

\draw[thick,-]  (-7,-2.5)to(9,-2.5);
\draw[thick,-]  (-7,2.5)to(9,2.5);

\draw (-4.4,0.6) node {$a_1$};
\draw (1.9,.6) node {$a_1$};
\draw (7,0.6) node {$a_1$};

\draw (-2.5,.6) node {$a_2$};
\draw (3.8,0.6) node {$a_2$};

\draw (-5,-3.2) node {$q_1$};
\draw (0,-3.2) node {$q_1$};
\draw (5,-3.2) node {$q_1$};

\draw[thick,-]  (-5,-2.5)to(-3,2.5);
\draw[thick,-]  (0,-2.5)to(-3,2.5);
\draw[thick,-]  (0,-2.5)to(2,2.5);
\draw[thick,-]  (5,-2.5)to(2,2.5);
\draw[thick,-]  (5,-2.5)to(7,2.5);

\draw[thick,Green,fill=white] (0,-2.5) circle [radius=0.15];
\draw[thick,Green,fill=white] (5,-2.5) circle [radius=0.15];
\draw[thick,Green,fill=white] (-5,-2.5) circle [radius=0.15];

\draw[thick,Green,fill=white] (7,2.5) circle [radius=0.15];
\draw[thick,Green,fill=white] (2,2.5) circle [radius=0.15];
\draw[thick,Green,fill=white] (-3,2.5) circle [radius=0.15];

\draw (-3,3.2) node {$q_2$};
\draw (2,3.2) node {$q_2$};
\draw (7,3.2) node {$q_2$};

\draw[blue] (-2.5,-1.7) node {\tiny $L_{a_1}a_2$};
\draw[blue] (2.5,-1.7) node {\tiny $R_{a_2}a_1$};

\draw (0,-5) node {case II};
\draw (2.5,-3.5) node {};

\end{tikzpicture}}
\end{center}

\begin{center}
\caption{Possible intersections of  $a_{1}$ and $a_{2}$ in an ordered exceptional pair $(a_1,a_2)$, where case II is depicted in the universal covering. This figure also illustrates the mutations of the ordered exceptional pair $(a_1,a_2)$ (in blue).}\label{figure:mutation of exceptional dissection}
\end{center}
\end{figure}
(2) Let $\mathbb{A}=(a_1,\ldots,a_n)$ be an exceptional dissection of arcs in $(S,M)$, for any $1 \leq i \leq n-1$, we define
\vspace{-.2cm}
$$\begin{array} {l}
\sigma_i \mathbb{A}=(a_1,\ldots,a_{i-1},a_{i+1}
,R_{a_{i+1}}a_i,a_{i+2},\ldots,a_n),
\\
\\
\sigma_i^{-1} \mathbb{A}=(a_1,\ldots,a_{i-1},L_{a_{i}}a_{i+1},a_{i}
,a_{i+2},\ldots,a_n).\\
\end{array}$$
\end{definition}

\medskip

\begin{theorem}\cite{CS22}\label{theorem:mutation}
Let $\mathbb{A}=(a_1,\ldots,a_n)$ be an exceptional dissection on $(S,M)$. Then  $\sigma_i \mathbb{A}$ and $\sigma^{-1}_i \mathbb{A}$ are exceptional dissections on $(S,M)$, for all $1 \leq i \leq n-1$.
Furthermore, this induces an action of the braid group $\mathfrak B_n$ on the set of exceptional dissections on $(S,M)$.
\end{theorem}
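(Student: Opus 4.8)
The plan is to push the whole question into the disk $D$ via the bijection $\Phi\colon\mathfrak S_2\to\mathfrak S_1$ of Proposition-Definition~\ref{prop-def:phi1} (a bijection by Theorem~\ref{main theorem bijections}), where the braid group appears in its classical incarnation acting on systems of arcs, following Hurwitz and Seidel~\cite{S01b}. Concretely, for a branched covering $\mathfrak p\colon(S,M)\to D$ with matching paths $\mathbb B=(b_1,\dots,b_n)$ and $1\le i\le n-1$, define the action of $\sigma_i$ on $\mathfrak S_2$ by the elementary Hurwitz move: keep $\mathfrak p$ and all $b_j$ with $j\ne i,i+1$ fixed, and replace the adjacent pair $(b_i,b_{i+1})$ by $(b_{i+1},b_i')$, where $b_i'$ is obtained from $b_i$ by the half-twist exchanging $p_i$ and $p_{i+1}$ (equivalently, $p_i$ is pushed counterclockwise around $p_{i+1}$, carrying $b_i$ along, and the branch points are relabelled so as to be met counterclockwise from $-1$). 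Since this is realised by a diffeomorphism of $D$ fixing $-1$ and the remaining branch points, it yields a valid new branched covering with matching paths, descends to $\mathfrak S_2$, and has as inverse the mirror move pushing $p_{i+1}$ clockwise around $p_i$.

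These operations satisfy $\sigma_i\sigma_j=\sigma_j\sigma_i$ for $|i-j|>1$, because the moves then have disjoint support in $D$, and $\sigma_i\sigma_{i+1}\sigma_i=\sigma_{i+1}\sigma_i\sigma_{i+1}$ by the standard picture in a three-punctured sub-disk; hence $\mathfrak B_n$ acts on $\mathfrak S_2$, and via $\Phi$ on $\mathfrak S_1$ by bijections of the set of exceptional dissections. Both assertions of the theorem will follow once we check that this transported action agrees with Definition~\ref{definition:mutations}, since every element of the image of $\Phi$ is an exceptional dissection.

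For that identification, note first that since $\sigma_i$ on $\mathfrak S_2$ leaves $b_j$ unchanged for $j\ne i,i+1$ and is realised by a diffeomorphism, it leaves $a_j=\mathfrak p^{-1}(b_j)$ unchanged for $j\ne i,i+1$, puts $\mathfrak p^{-1}(b_{i+1})=a_{i+1}$ in slot $i$, and puts $\mathfrak p^{-1}(b_i')$ in slot $i+1$. One computes $\mathfrak p^{-1}(b_i')$ by lifting the half-twist to $S$: in a neighbourhood of $p_i\cup b_{i+1}\cup p_{i+1}$ the sheet identification over $b_{i+1}$ is governed by the transposition $\tau_{i+1}$, and one reads off that $\mathfrak p^{-1}(b_i')$ is obtained from $a_i$ by smoothing against $a_{i+1}$ at their common endpoint(s) --- no change when $a_i,a_{i+1}$ are disjoint (i.e.\ $\tau_i,\tau_{i+1}$ disjoint), a single smoothing when they share exactly one endpoint ($\tau_i,\tau_{i+1}$ share one letter; case I of Figure~\ref{figure:mutation of exceptional dissection}), and the iterated double smoothing when they share both endpoints ($\tau_i=\tau_{i+1}$; case II). This is precisely $R_{a_{i+1}}a_i$; the mirror computation for $\sigma_i^{-1}$ gives $L_{a_i}a_{i+1}$ in slot $i$, so the transported action coincides with $\sigma_i\mathbb A$ and $\sigma_i^{-1}\mathbb A$ of Definition~\ref{definition:mutations}, completing the argument.

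The real content sits in this last step: performing the explicit lift of the half-twist through the branched cover and verifying, via a careful local picture in each of the three configurations of $(a_i,a_{i+1})$, that it reproduces the (possibly iterated) smoothing of Definition~\ref{definition:mutations}; the double-smoothing case (common transposition, two shared endpoints) is the most delicate, since one must confirm that the two smoothings commute and are performed in the order dictated by the figure. Everything else is routine two-dimensional mapping class group combinatorics. One could instead bypass the branched cover and argue directly on $(S,M)$ --- checking that the mutated arc is embedded, that the counterclockwise order is preserved at every endpoint, and that the ``polygon with exactly one boundary edge'' property persists (an enclosed boundary-free subsurface being ruled out by the total order on the arcs, exactly as in the proof of Proposition~\ref{prop:exc}), and then verifying the braid relations by hand --- but that route largely duplicates the same case analysis.
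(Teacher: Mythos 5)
Your argument is correct, but it takes a genuinely different route from the paper: the paper gives no proof of this statement at all, citing \cite{CS22}, where the result is established by a direct case analysis on the surface $(S,M)$ (checking embeddedness, the counterclockwise ordering at endpoints, and the one-boundary-edge polygon condition for the mutated collection, and then verifying the braid relations for the mutation formulas) --- essentially the ``alternative route'' you sketch in your final paragraph. What you do instead is reassemble the theorem from ingredients the paper develops for other purposes: the bijection $\Phi\colon\mathfrak S_2\to\mathfrak S_1$ of Theorem~\ref{main theorem bijections}, the manifest $\mathfrak B_n$-action on $\mathfrak S_2$ coming from $\mathrm{Mod}(D\setminus\{p_1,\dots,p_n\})$, and the lifting computation that in the paper constitutes the proof of Proposition~\ref{prop:braids1s2}. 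This inverts the paper's logical order (the paper takes the $\mathfrak S_1$-action as given and then proves the intertwining), but there is no circularity: Proposition~\ref{prop:exc} gives you that $\mathfrak p^{-1}(\sigma_i\mathbb B)$ is an exceptional dissection, surjectivity of $\Phi$ lets you start from an arbitrary $\mathbb A$, and the braid relations are inherited from the punctured disk. The payoff of your route is that both assertions (closure under $\sigma_i^{\pm 1}$ and the braid relations) come for free from classical two-dimensional topology; the cost is that the entire content of Theorem~\ref{main theorem bijections} becomes a prerequisite, whereas the direct argument of \cite{CS22} is self-contained on $(S,M)$.

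Two small points to tighten. First, the identification $\mathfrak B_n\cong\mathrm{Mod}(D\setminus\{p_1,\dots,p_n\})$ is itself fixed by the matching paths $\mathbb B$, so if you realize $\sigma_i$ by moving $\mathbb B$ while keeping $\mathfrak p$ fixed, you must re-choose the half-twist arcs after each move before verifying the braid relations; the cleanest fix (used in the paper) is to instead act by $\mathfrak p\mapsto f\circ\mathfrak p$ with $\mathbb B$ fixed, so the identification never changes. Second, in the two-shared-endpoints case your phrase ``$\tau_i=\tau_{i+1}$'' is the right criterion only at the level of the Hurwitz system; the lift of the half-twist still has to be checked to produce the iterated smoothing in the order prescribed by Definition~\ref{definition:mutations}, which is exactly the third case treated in the proof of Proposition~\ref{prop:braids1s2}.
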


\subsection{Braid group action on branched coverings with matching paths}

Let $(S,M)$ be a marked surface, $\mathfrak p:(S,M)\to D$ a branched covering with branch points $p_1,\ldots,p_n$, and $\mathbb B$ a choice of matching paths.
The mapping class group $\mathrm{Mod}(D\setminus \{p_1,\ldots,p_n\})$ of diffeomorphisms fixing the boundary and mapping punctures to punctures, up to isotopy, is isomorphic to the braid group $\mathfrak B_n$.
A choice of isomorphism is fixed by $\mathbb B$ as follows.
First, there is total order on the set of matching paths corresponding to the order in which they meet $-1\in D$.
Thus there is a total order on the set of branch points which we can assume to be $p_1<p_2<\ldots<p_n$.
Choose an embedded path $e_i$ which connects $p_i$ to $p_{i+1}$, $i=1,\ldots,n-1$, and sends the generator $\sigma_i\in\mathfrak B_n$ to the \emph{half-twist} along $e_i$, i.e. a diffeomorphism of $S$ which interchanges $p_i$ and $p_{i+1}$ and is the identity outside a neighbourhood of $e_i$,(see Figure~\ref{fig:braidgroupaction}).

Using the above identification, we define the action of $\sigma\in\mathfrak{B}_n$ on $(\mathfrak p,\mathbb B)$ by sending $\mathbb B$ to its image $f(\mathbb B)$ under the element $f$ of $\mathrm{Mod}(D\setminus \{p_1,\ldots,p_n\})$ corresponding to $\sigma$. 
The result is well-defined up to equivalence, i.e. well defined as an element of $\mathfrak S_2$.
Equivalently, we can instead send $\mathfrak p$ to $f\circ \mathfrak p$.
In the second description it is easy to see that we get an action of $\mathfrak{B}_n$, since the identification of the braid group with $\mathrm{Mod}(D\setminus \{p_1,\ldots,p_n\})$ stays the same.

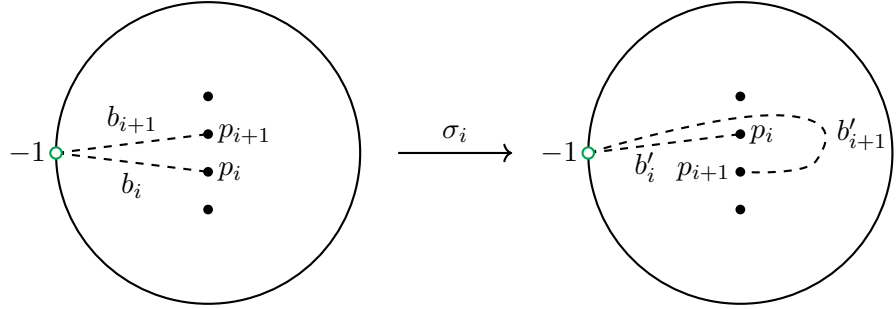
\begin{figure}[h]
\begin{center}{
\begin{tikzpicture}[scale=0.5]
\draw[thick,fill=white] (0,0) circle (4cm);		

\path (-4,0) coordinate (-1)
      (0,1.5) coordinate (bi-1)
      (0,-.5) coordinate (bi)
      (0,.5) coordinate (bi+1)
      (0,-1.5) coordinate (bi+2);
\draw (-1) node[left] {$-1$};
\draw (bi) node[right] {$p_i$};
\draw (bi+1) node[right] {$p_{i+1}$};

\draw[dashed,thick] (-1) to node[below]{$b_i$} (bi);
\draw[dashed,thick] (-1) to node[above]{$b_{i+1}$} (bi+1);

\draw[Green,thick,fill=white] (-1) circle (.15cm);

\draw[thick,fill] (bi) circle (.1cm)
(bi-1) circle (.1cm)
(bi+1) circle (.1cm)
(bi+2) circle (.1cm);	
\draw[thick,fill=white] (14,0) circle (4cm);		

\path (10,0) coordinate (-1)
      (14,1.5) coordinate (bi-1)
      (14,-.5) coordinate (bi)
      (14,.5) coordinate (bi+1)
      (14,-1.5) coordinate (bi+2);
\draw (-1) node[left] {$-1$};
\draw (bi) node[left] {$p_{i+1}$};
\draw (bi+1) node[right] {$p_i$};
\draw (17.2,.5) node {$b'_{i+1}$};

\draw[dashed,thick]plot [smooth,tension=1] coordinates {(-1) (15,1) (16,-.2) (bi)};

\draw[dashed,thick] (-1) to (bi+1);
\draw (11.5,0.3) node[below]{$b'_{i}$};
\draw[Green,thick,fill=white] (-1) circle (.15cm);

\draw[thick,fill] (bi) circle (.1cm)
(bi-1) circle (.1cm)
(bi+1) circle (.1cm)
(bi+2) circle (.1cm);	

\draw[->,thick] (5,0) to node[above]{$\sigma_{i}$} (8,0);
\end{tikzpicture}}
\end{center}
\begin{center}
\caption{The action of the generator $\sigma_i$ in the braid group $\mathfrak{B}_n$ on matching paths.}\label{fig:braidgroupaction}
\end{center}
\end{figure}

\begin{proposition}\label{prop:braids1s2}
The bijection $\Phi:\mathfrak S_2\to\mathfrak S_1$ intertwines the two braid group actions defined above.
\end{proposition}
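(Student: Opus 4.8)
The plan is to show that the braid group action on $\mathfrak S_2$, defined via the mapping class group $\mathrm{Mod}(D\setminus\{p_1,\ldots,p_n\})$, matches the combinatorial mutation of exceptional dissections after applying $\Phi=\mathfrak p^{-1}(-)$. Since $\mathfrak B_n$ is generated by the $\sigma_i$, it suffices to check compatibility for a single generator $\sigma_i$, and since the mutation formulas in Definition~\ref{definition:mutations} only involve the arcs $a_i,a_{i+1}$ (leaving $a_j$, $j\neq i,i+1$, unchanged up to isotopy), the whole computation is local: I would restrict attention to a disk neighbourhood $N\subset D$ containing $p_i,p_{i+1}$, the arc $e_i$, and the portions of $b_i,b_{i+1}$ near $-1$, and to its preimage $\mathfrak p^{-1}(N)\subset S$, which is a surface covering $N$ with exactly two ramification points.

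First I would make precise the local picture on the base: the half-twist $f=f_{e_i}$ representing $\sigma_i$ is supported near $e_i$, so outside a neighbourhood of $e_i$ the matching paths $b_j$ are unchanged, and $b_i,b_{i+1}$ get replaced by $b_i'=f(b_i)$, $b_{i+1}'=f(b_{i+1})$ as in Figure~\ref{fig:braidgroupaction}: up to isotopy, $b_i'$ goes to the old point $p_{i+1}$ directly (it becomes what was $b_{i+1}$), while $b_{i+1}'$ wraps around $p_{i+1}$ to reach $p_i$. Then I would pull back: $a_j'=\mathfrak p^{-1}(b_j')$ for $j\neq i,i+1$ are isotopic to the old $a_j$, while $a_i'=\mathfrak p^{-1}(b_i')=a_{i+1}$ (the lift of what is now the straight path to $p_{i+1}$), and $a_{i+1}'=\mathfrak p^{-1}(b_{i+1}')$ is the lift of the path that first runs alongside $b_i$ to near $p_i$ then detours around $p_{i+1}$. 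The key point is to identify this last lift: the detour around the branch point $p_{i+1}$ has the effect of ``passing through'' the ramification point over $p_{i+1}$ and thus of smoothing $a_i$ against $a_{i+1}$ at their common endpoint(s). Concretely, I would analyze the two cases of Figure~\ref{figure:mutation of exceptional dissection} separately, according to whether the two sheets interchanged over $p_i$ and over $p_{i+1}$ share zero, one, or two labels in $M$ --- equivalently whether $a_i$ and $a_{i+1}$ share zero, one, or two endpoints. When the transpositions $\tau_i,\tau_{i+1}$ are disjoint, the lift of $b_{i+1}'$ is just isotopic to $a_i$, matching $R_{a_{i+1}}a_i=a_i$. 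When they share exactly one element, the monodromy computation $\tau_i\tau_{i+1}\tau_i$ (or its inverse, depending on the direction of the half-twist) shows the lift realizes precisely the single smoothing, matching $R_{a_{i+1}}a_i$ in case I. When they share both endpoints, the lift wraps twice and yields the double smoothing of case II. In all cases the result is $\sigma_i\mathbb A$ in the sense of Definition~\ref{definition:mutations}(2), and a parallel check (using $f^{-1}$) gives the $\sigma_i^{-1}$ statement; since both sides define genuine $\mathfrak B_n$-actions (Theorem~\ref{theorem:mutation} and the remark after the definition of the action on $\mathfrak S_2$) and they agree on generators, they agree.

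I expect the main obstacle to be the careful bookkeeping in the two-shared-endpoints case (case II): one must track how the twice-wrapping matching path $b_{i+1}'$ lifts through \emph{both} ramification points, and verify that the resulting arc is isotopic to the arc obtained by the two successive smoothings in Definition~\ref{definition:mutations}(1). Here the cleanest argument is to pass to the universal cover of the twice-punctured disk (as Figure~\ref{figure:mutation of exceptional dissection} already does on the $S$ side) and match the lift of the half-twisted path against the combinatorial picture there; alternatively one can argue via the monodromy representation, noting that the new $i$-th and $(i+1)$-st transpositions are $\tau_{i+1}$ and $\tau_{i+1}\tau_i\tau_{i+1}$ and that an exceptional dissection is determined up to isotopy by its underlying ribbon graph together with the order of arcs, so it suffices to check the endpoints and the cyclic order at each vertex are correct. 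A secondary, purely orientational, point to get right is the sign/direction convention: one must confirm that the counterclockwise half-twist along $e_i$ (with $\mathbb B$ fixing the identification $\mathrm{Mod}\cong\mathfrak B_n$) corresponds to $\sigma_i$ and not $\sigma_i^{-1}$, which is forced by comparing Figure~\ref{fig:braidgroupaction} with the definition of $\sigma_i\mathbb A$; once this is pinned down, the rest is a routine, if delicate, verification.
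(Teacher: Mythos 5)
Your proposal follows essentially the same route as the paper's proof: reduce to a single generator $\sigma_i$, observe that the new $i$-th arc is $a_{i+1}$, and then identify the lift of the twisted matching path $b_{i+1}'$ with $R_{a_{i+1}}a_i$ by a case analysis on whether $a_i$ and $a_{i+1}$ share zero, one, or two endpoints, exactly as in the paper. One minor caveat: the alternative you sketch for case II via the monodromy representation alone would not suffice, since the new transpositions $\tau_{i+1}$ and $\tau_{i+1}\tau_i\tau_{i+1}$ together with endpoint and cyclic-order data do not determine the arcs up to isotopy (this is precisely the framed versus unframed distinction the paper exploits to build counterexamples), but your primary argument by directly lifting the half-twisted path and homotoping it to the smoothing is the one the paper uses and is sound.
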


\begin{proof}
Let $\sigma_i\in\mathfrak B$ be a standard generator, $\mathfrak p:(S,M)\to D$ a branched covering, and $\mathbb B=(b_1,\ldots,b_n)$ a choice of matching paths.
Then $\sigma_i\mathbb B=(b_1,\ldots,b_{i-1},b_i',b_{i+1}',b_{i+2},\ldots,b_n)$ where $b_i'$ and $b_{i+1}'$ are depicted in Figure~\ref{fig:braidgroupaction}.
Let $a_i,a_{i+1},a_i',a_{i+1}'$ be the arcs in $S$ corresponding to $b_i,b_{i+1},b_i',b_{i+1}'$, respectively.
Since $b_i'=b_{i+1}$, it is clear that $a_i'=a_{i+1}$.
It remains to show that $a_{i+1}'$ is homotopic to $R_{a_{i+1}}a_i$.
There are three cases, as in the definition of $R_{a_{i+1}}a_i$.

In the first case, $a_i$ and $a_{i+1}$ do not intersect, so we can slide $b_{i+1}'$ back to $b_i$ and this lifts to a homotopy of paths from $a_{i+1}'$ to $a_i$, since the two sheets that $a_i$ lies on are not ramified at $p_{i+1}$.
Thus $a_{i+1}'$ is homotopic to $a_i$, which is $R_{a_{i+1}}a_i$ by definition.

In the second case, using a similar idea but moving on only one of the two sheets, we can find a homotopy between $a_{i+1}'$ and a path $\tilde{a}$ which projects to a path that follows $b_i$, a path from $p_i$ to $p_{i+1}$, and $b_{i+1}$.
A further homotopy of the middle part of $\tilde{a}$ deforms this to a smoothing of the concatenation of $a_i$ and $a_{i+1}$, which is $R_{a_{i+1}}a_i$ by definition.

In the third case only two sheets are involved. Then $b_{i+1}'$ is homotopic to a path which follows $b_{i+1}$ from $-1$ to $p_{i+1}$, follows $b_{i+1}$ back to $-1$, and follows $b_i$ to $p_i$.
Correspondingly, $a_{i+1}'$ is homotopic to a path which follows $a_i$, then $a_{i+1}$, and then $a_i$ again, and is thus homotopic to $R_{a_{i+1}}a_i$ by definition.
\end{proof}

\subsection{Braid group action on Hurwitz systems}\label{subsec:brhurwitz}

{The action of the generator $\sigma_i$ of $\mathfrak{B}_n$ on Hurwitz systems is defined by
$$\sigma_i: (\tau_1,\ldots,\tau_n)\mapsto
(\tau_1,\ldots,\tau_{i-1},\tau_{i+1},\tau_{i+1}\tau_i\tau_{i+1},\tau_{i+2},\ldots,\tau_n).$$
By inspection of Figure~\ref{figure:mutation of exceptional dissection} (see also~\cite{Hur1891}) one sees that this is compatible with the $\mathfrak B_n$-action on $\mathfrak S_1$ in the sense that the map $\mathfrak S_1\to\mathcal H$ intertwines the two actions.}

In~\cite{K88}, Kluitmann proves the following transitivity of the braid group action on Hurwitz systems.

\begin{theorem}[\cite{K88}]
\label{thm:kluitmann}
The braid group $\mathfrak B_n$ acts transitively on the set of Hurwitz systems {of type $(S,M)$}.
\end{theorem}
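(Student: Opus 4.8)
The plan is to translate the statement into pure combinatorics and then argue by induction. Attaching to an exceptional dissection $\mathbb A=(a_1,\dots,a_n)$ on $(S,M)$ its Hurwitz system records the tuple $\tau=(\tau_1,\dots,\tau_n)$ of transpositions of $M$, where $\tau_i$ swaps the two endpoints of $a_i$, and $n=|M|+b+2g-2$. By the lemma above $\tau$ generates $\mathrm{Sym}(M)$, and the product $\tau_1\cdots\tau_n$ equals the monodromy of the associated branched covering around $\partial D$, i.e. the permutation $\pi$ of $M$ which cyclically permutes, within each boundary component of $S$, its marked points by one step in the direction determined by the orientation; in particular $\pi$ depends only on $(S,M)$, not on $\mathbb A$. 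Conversely, every tuple of $n$ transpositions of $M$ generating $\mathrm{Sym}(M)$ with product $\pi$ is the Hurwitz system of some exceptional dissection on $(S,M)$, since one can build the branched covering directly (the invariants $m=|M|$, $b$, $g$ being forced by $n$, $\pi$ and Riemann--Hurwitz) and apply Theorem~\ref{main theorem bijections}. As the braid moves $\sigma_i\colon(\dots,\tau_i,\tau_{i+1},\dots)\mapsto(\dots,\tau_{i+1},\tau_{i+1}\tau_i\tau_{i+1},\dots)$ preserve both the product and the generated subgroup, Theorem~\ref{thm:kluitmann} becomes the assertion that $\mathfrak B_n$ acts transitively on
\[
\mathcal H=\{\tau=(\tau_1,\dots,\tau_n)\ :\ \tau_i\ \text{transpositions of}\ M,\ \langle\tau_1,\dots,\tau_n\rangle=\mathrm{Sym}(M),\ \tau_1\cdots\tau_n=\pi\}.
\]

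I would prove this by induction on $m=|M|$, peeling off one marked point at a time; the cases $m\le 2$ are trivial, since then $\mathcal H$ is empty or a single tuple. For the inductive step fix $v\in M$ and let $G$ be the connected multigraph on vertex set $M$ with one edge $\{$endpoints of $\tau_i\}$ per factor. Using only the elementary operations built from the $\sigma_i^{\pm 1}$ — swapping two disjoint (hence commuting) factors, sliding a factor past a neighbour while conjugating that neighbour, and the local reshaping $((a,b),(b,c))\mapsto((b,c),(a,c))$ of two factors sharing a symbol — one can:
\begin{enumerate}
\item[(a)] bring all factors incident to $v$ together, say into the last $k$ positions; sliding a $v$-incident factor past a non-$v$-incident one keeps it $v$-incident, so this does not change $k$;
\item[(b)] reduce $k$ to the minimum forced by connectivity of $G$ — namely $k=1$ if $\pi(v)\ne v$ and $k=2$ if $\pi(v)=v$ — and normalize the resulting factor(s) to $(c,v)$ (one, resp.\ two copies) for a standard $c\in M\setminus\{v\}$; when $\pi(v)\ne v$ one necessarily has $c=\pi^{-1}(v)$, and when $\pi(v)=v$ the value of $c$ is normalized using a path in $G\setminus v$ from $c$ to the unique $v$-neighbour;
\item[(c)] delete $v$: after (b) it is a leaf (or carries a single doubled edge), so $G\setminus v$ is still connected, the first $n-k$ factors generate $\mathrm{Sym}(M\setminus\{v\})$ with the determined product $\pi(c,v)^{k}$, and the inductive hypothesis — applied through the inclusion $\mathfrak B_{n-k}\hookrightarrow\mathfrak B_n$ of braids fixing the last $k$ strands — puts them into a normal form, whose concatenation with the standard $v$-block is a normal form for $\mathcal H$ depending only on $(S,M)$.
\end{enumerate}
An alternative bookkeeping inducts on the number $r=n-(m-1)=b+2g-1$ of "extra" edges of $G$: first bring a spanning tree of $G$ into the first $m-1$ positions (greedily committing edges and deferring to the tail any edge that would close a cycle), then treat the base case $r=0$, where $\pi$ is automatically an $m$-cycle and transitivity is the classical theorem of Hurwitz~\cite{Hur1891}, and finally normalize the $r$ remaining transpositions against the tree, one at a time, removing a non-bridge edge at each stage.

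The substantive difficulty — and the reason this is a theorem rather than a routine check — is step (b). The number of factors incident to a fixed symbol is \emph{not} invariant under the Hurwitz action: reshaping two factors sharing a symbol can raise or lower it, and since $n$ is fixed one can never simply cancel a repeated pair. The delicate sub-cases are (i) all $k$ collected factors coincide, where one must import a non-$v$-incident factor involving the unique $v$-neighbour — available precisely because $G$ is connected and $m\ge 3$ — and use it to break the tie before reducing; and (ii) $v$ a fixed point of $\pi$, where $k$ is constrained to be even and must be driven down to $2$ while controlling how the imported factors and the rest of $G$ transform. Carrying these reductions through in full detail is exactly what is done by Kluitmann in \cite{K88}, and for the bookkeeping I would follow that reference.
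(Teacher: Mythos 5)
Your reduction --- fixing the boundary monodromy $\pi$ determined by $(S,M)$ and invoking Kluitmann's transitivity of the Hurwitz action on tuples of transpositions generating $\mathrm{Sym}(M)$ with prescribed product --- is exactly how the paper treats this statement, which it attributes wholesale to \cite{K88} and does not reprove. Your inductive sketch is a plausible outline of the combinatorial core, but since you (like the paper) ultimately defer the genuinely delicate step to \cite{K88}, the two treatments coincide in substance.
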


Actually, Kluitmann classifies the orbits of $\mathfrak B_n$ and shows that they correspond exactly to the cycle type of the permutation $\tau_1\tau_2\cdots \tau_n$.
But cycle types correspond precisely to marked surfaces up to isomorphism, where a $k$-cycle corresponds to a component of $\partial S$ with $k$ points on the boundary and the genus of $S$ is determined by the relation $n=m+b+2g-2$.

In contrast, we will see in Section \ref{section:counterexamples} that the action of $\mathfrak B_n$ on {$\mathfrak S_1\cong\mathfrak S_2$} is not always transitive.

\section{Applications to Fukaya categories of surfaces}

In this section we relate the geometric considerations of the previous two sections to full exceptional sequences in Fukaya categories of surfaces.
In this section and the following we fix an algebraically closed coefficient field $\mathbf{k}$.

\subsection{Fukaya categories of graded surfaces}  \label{sec:fukaya}

In contrast to the higher dimensional case, Fukaya categories of surfaces have much simpler definitions.
Here, we recall the approach from~\cite{HKK17}.
The input data for the construction is:
\begin{itemize}
\item
a compact oriented surface $S$, possibly with boundary,
\item
\textit{marked points:} a non-empty finite subset $M\subset S$, such that each component of $\partial S$ contains at least one point of $M$, and
\item
\textit{grading structure:} a line field $\nu\in\Gamma(S,\mathbb P(TS))$, i.e. a choice of tangent line $\nu_p\subset T_pS$, varying smoothly with $p\in S$.
\end{itemize}

From the above data a triangulated $A_\infty$-category, the \textbf{(partially wrapped) Fukaya category}, $\mathcal F_\infty(S,M,\nu)$ (whose triangulated homotopy category is denoted here by $\mathcal F(S,M,\nu):=H^0(\mathcal F_\infty(S,M,\nu))$) is constructed in~\cite{HKK17}.
We review the parts of the construction which are relevant for the present work.

First, an auxiliary choice of \textit{arc system} on $S$ is made.
This is a collection, $\mathbb A$, of immersed arcs in $S$ whose endpoints belong to $M$, which intersect themselves and each other transversely and only in endpoints, and which cut $S$ into polygons which contain no marked points in their interior and whose edges are arcs belonging to $\mathbb A$.
In particular, each interval in $\partial S$ between adjacent marked points must belong to $\mathbb A$.

Each $a\in \mathbb A$ will determine an object in $\mathcal F_\infty(S,M,\nu)$ up to shift.
To fix the shift, a choice of \textit{grading} on the arcs is needed, which we include henceforth as part of the structure of an arc system.
A \textit{graded curve} in $S$ is a triple $(I,c,\tilde{c})$ where $c:I\to S$ is an immersed curve and $\tilde{c}$ is a homotopy class of paths in $\Gamma(I,c^*\mathbb P(TS))$ from $c^*\nu$ to $\dot{c}$, where $\dot c$ is the section given by the tangent lines to $c$.
Concretely: If $t\in I$ then $T_{c(t)}S$ contains two lines: $\nu_{c(t)}$ and the tangent line to $c$.
To give $\tilde{c}$ is to choose a way of interpolating between the two, consistently as one varies $t$.
If $I=[0,1]$, then the possible choices of $\tilde{c}$ form a $\mathbb Z$-torsor, where $1\in\mathbb Z$ acts by adding a \emph{counterclockwise} rotation of angle $\pi$ to $\tilde c$.
For convenience, we will sometimes abuse the notation and denote the graded curve by $c$.

The next step is to consider the real blow-up, $\widehat{S}$, of $S$ in $M$.
This is the surface with corners where each $p\in M\setminus \partial S$ has been replaced by a boundary circle and each $p\in M\cap\partial S$ has been replaced by an interval whose endpoints are corners of $\widehat S$.
There is a canonical map $\widehat{S}\to S$ which collapses these new parts of the boundary, the \textit{marked boundary}, to $M$.
(In~\cite{HKK17} the surface with corners $\widehat S$ was, equivalently, taken as a starting  point instead of $S$ itself.)
The grading $\nu$ lifts to $\widehat S$, possibly after small perturbation near $M$, and the arcs in $\mathbb A$ lift to non-intersecting embedded intervals on $\widehat S$ which end in the marked boundary.
The advantage of passing to $\widehat S$ is that we can speak about \textit{boundary paths}, which are immersed paths, up to reparametrization, that start and end at arcs in $\mathbb A$ and follow the marked boundary in the direction where the interior of the surface stays to the right of the path (see Figure~\ref{fig:g1b2}).

The choice of grading of the arcs in $\mathbb A$ allows us to assign an integer degree, $|\za|$, to any boundary path $\za$.
First, suppose $(I_i,c_i,\tilde{c}_i)$, $i\in\{1,2\}$, are any two graded curves in $S$ intersecting transversely at $p=c_1(t_1)=c_2(t_2)$.
Then the \textit{intersection index} of $c_1$ and $c_2$ at $p$ is
\[
i_p(c_1,c_2):=\tilde{c}_1(t_1)\cdot \kappa\cdot\tilde{c}_2(t_2)^{-1}\in\pi_1(\mathbb P(T_pS))=\mathbb Z
\]
where $\kappa$ is the minimal counterclockwise rotation to get from $\dot{c}_1(t_1)$ to $\dot{c}_2(t_2)$. In particular, we have
\begin{equation}\label{eq:index}
i_p(c_1,c_2)+i_p(c_2,c_1)=1.
\end{equation}
To define $|\za|$ for a boundary path $\za$ starting at $a\in \mathbb A$ and ending at $b\in\mathbb A$, choose a grading on $\za$ and let $|\za|:=i_p(a,\za)-i_p(b,\za)$.
This is additive under concatenation of boundary paths.

The construction in~\cite{HKK17} proceeds by defining an $A_\infty$-category $\mathcal F_{\mathbb A}$ whose objects are elements of $\mathbb A$, whose morphisms from $a$ to $b$ have a basis given by boundary paths from $a$ to $b$, and, if $a=b$, the identity morphism, and structure constants which have two sources: concatenation of boundary paths and counting of immersed polygons whose boundary maps to arcs and boundary paths.
The category $\mathcal F_\infty(S,M,\nu)$ is then defined as the category $\mathrm{Tw}(\mathcal F_{\mathbb A})$ of (one-sided) twisted complexes over $\mathcal F_{\mathbb A}$  --- a concrete model for the closure under shifts and cones of $\mathcal F_{\mathbb A}$.
This category is shown to be independent of the choice of $\mathbb A$ up to canonical equivalence.

In the case $\partial S\neq\emptyset$, which includes the cases of interest in this work, the partially wrapped Fukaya category has a simpler description in terms of \textit{graded gentle algebras}.
The point is that under the condition $\partial S\neq\emptyset$ we can find a \textit{full formal arc system}, $\mathbb A$, which is similar to an arc system, but instead of requiring that all edges of any polygon cut out by $\mathbb A$ belong to $\mathbb A$, we require instead that all but exactly one of the edges of any polygon belong to $\mathbb A$.
The edges which do not belong to $\mathbb A$ must be boundary arcs, so we can always obtain an arc system $\mathbb A'$ from a full formal arc system $\mathbb A$ by adding all boundary arcs.
There is a full subcategory $\mathcal F_{\mathbb A}\subset \mathcal F_{\mathbb A'}$ whose objects are precisely those arcs belonging to $\mathbb A$, and which has the following properties~\cite[Section 3.4]{HKK17}:
\begin{enumerate}
\item
All $A_\infty$ structure maps $\mathfrak m_k$ vanish for $k\neq 2$, i.e. $\mathcal F_{\mathbb A}$ is essentially a graded linear category.
\item
$\mathcal F_{\mathbb A}$ generates $\mathcal F_{\mathbb A'}$ under shifts and cones, so
\[
\mathrm{Tw}(\mathcal F_{\mathbb A})=\mathrm{Tw}(\mathcal F_{\mathbb A'})=\mathcal F_\infty(S,M,\nu)
\]
\end{enumerate}
In particular, the direct sum of objects corresponding to arcs of a full formal arc system is a formal generator of $\mathcal F_\infty(S,M,\nu)$.
The product of basis morphisms in $\mathcal F_{\mathbb A}$ is concatenation of paths (and is zero if the paths cannot be concatenated).

We call the category $\mathcal F(S,M,\nu):=H^0(\mathcal F_\infty(S,M,\nu))$ the \emph{topological Fukaya category} associated to the graded surface.
Then $\mathcal F(S,M,\nu)$ is a triangulated category which is triangle equivalent to the derived category of graded gentle algebras arising from the full formal arc systems on the surface.
{There are nice descriptions of the indecomposable objects and morphisms between them in $\mathcal F(S,M,\nu)$, which we summarise in Theorem~\ref{lemma:hom-int}, where the first part is due to \cite{HKK17} and the second part (on morphisms) is due to \cite[Lemma 9]{T22}, see also \cite{IQZ20}.}

\begin{theorem}\cite{HKK17,T22,IQZ20}\label{lemma:hom-int}
There is a bijection $X$ between
the set of isotopy classes of graded curves $\{(I,c,\tilde{c})\}$ on $(S,M,\nu)$ with local system
and the set of isomorphism classes of indecomposable objects $\{X_{c}\}$ in $\mathcal F(S,M,\nu)$.
Furthermore, let $(I_1,c_1,\tilde{c}_1)$, $(I_2,c_2,\tilde{c}_2)$ be two graded curves which are not closed curves (and hence no local system is needed). 
Then each index $\rho$ intersection between them induces a (non-trivial) morphism in
$\Hom^\rho(X_{c_1}, X_{c_2})$.
Moreover, when $c_1\neq c_2$, these morphisms form a basis for the $\Hom^\bullet$ space so that we have
\begin{gather}\label{eq:int}
    \dim\Hom^\bullet(X_{c_1}, X_{c_2})=\Int({c_1},{c_2}),
\end{gather}
where
\[\Int(c_1,c_2)=\sum_{\rho\in\mathbb{Z}}
    \Int^\rho(c_1,c_2)\]
is the number of geometric intersections between $c_1$ and $c_2$.
When $c_1=c_2$, the shift of the  identity map is the only extra base map which does not correspond to an intersection.
In particular, a boundary path $\za$ from $c_1$ to $c_2$ is considered as an oriented intersection from $c_1$ to $c_2$ with index $|\za|$, which corresponds to a common endpoint on the boundary if they collapse to arcs on $S$.
\end{theorem}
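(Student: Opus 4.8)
The plan is to transport both statements to the combinatorics of a graded gentle algebra and then read the answer back off the surface. Fix a full formal arc system $\mathbb A$ as in Section~\ref{sec:fukaya}, so that $\mathcal F_\infty(S,M,\nu)=\mathrm{Tw}(\mathcal F_{\mathbb A})$ and $\mathcal F(S,M,\nu)$ is triangle equivalent to the (bounded) derived category $D^b(\Lambda)$ of the associated graded gentle algebra $\Lambda=\Lambda_{\mathbb A}$. Throughout, the line field $\nu$ is lifted to the real blow-up $\widehat S$, so that a graded curve $(I,c,\tilde c)$ on $(S,M,\nu)$ becomes an immersed curve on $\widehat S$ together with a homotopy class of interpolations from $c^\ast\nu$ to $\dot c$, which is exactly the combinatorial grading datum attached to a string.

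\emph{Objects.} First I would invoke the classification of indecomposable objects of $D^b(\Lambda)$ for $\Lambda$ graded gentle: each one is either a \emph{string object} determined by a graded string $\sigma$, or a \emph{band object} determined by a graded band together with a Jordan block $(\lambda,k)\in\mathbf{k}^{\ast}\times\mathbb{Z}_{\geq 1}$. The translation is then: an $\mathbb A$-transverse immersed curve on $\widehat S$ with ends on the marked boundary records, as it crosses the arcs of $\mathbb A$ and runs through the polygons cut out by $\mathbb A$, precisely a string in $\Lambda$ (finite or one-sided infinite, the latter corresponding to curves spiralling into a puncture); a primitive closed curve records a band, and the Jordan data $(\lambda,k)$ is exactly a rank-$k$ indecomposable local system on the circle. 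Rotation and inversion of strings and bands correspond to the two indeterminacies in the combinatorial data that do not change the isotopy class of the graded curve, and one checks that the resulting matching is a bijection --- this is the objects part of~\cite{HKK17}.

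\emph{Morphisms.} For the second part, restricting to non-closed curves, I would use the explicit combinatorial basis of $\Hom^\bullet(P_{\sigma_1},P_{\sigma_2})$ between string objects given by \emph{graph maps} and \emph{quasi-graph maps}: each basis element is supported on a maximal common (graded) substring of $\sigma_1$ and $\sigma_2$, carrying a well-defined cohomological degree. Put $c_1,c_2$ in minimal position; a maximal common substring is exactly a maximal arc along which $c_1$ and $c_2$ run parallel through the $\mathbb A$-polygons, and its two ends are of one of two types: a transverse interior intersection point $p$ of $c_1$ and $c_2$, contributing a morphism of degree $\rho=i_p(c_1,c_2)$, or a common endpoint on the marked boundary, i.e. a boundary path $\za$ of degree $|\za|$. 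This identifies basis morphisms with index-$\rho$ intersections (boundary endpoints included), which is~\eqref{eq:int}; when $c_1=c_2$ the one overlap that is neither a genuine self-intersection nor a boundary endpoint is the whole string, and it gives the (shift of the) identity. The refinement from $\Hom^0$ to the full graded $\Hom^\bullet$ with this basis is where~\cite{IQZ20} enters.

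The step I expect to be the main obstacle is making this last translation rigorous: one must know that the combinatorial $\Hom$-count over $\Lambda_{\mathbb A}$ does not depend on the auxiliary arc system $\mathbb A$ and coincides with the \emph{geometric} intersection count for arbitrary graded curves, not only for those in general position with respect to $\mathbb A$. This requires, first, the invariance of $\mathcal F_\infty(S,M,\nu)$ under change of full formal arc system (already in~\cite{HKK17}), and second, a minimal-position argument showing that removing bigons between $c_1$ and $c_2$ by isotopy neither creates nor destroys basis morphisms, so that $\Int(c_1,c_2)$ really is the rank of $\Hom^\bullet(X_{c_1},X_{c_2})$. The remaining points --- compatibility of the grading conventions after lifting $\nu$ to $\widehat S$, the index relation~\eqref{eq:index}, and additivity of $|\za|$ under concatenation of boundary paths --- are routine bookkeeping.
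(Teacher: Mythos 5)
This statement is not proved in the paper at all: it is imported as background, with the object classification attributed to \cite{HKK17} and the intersection--morphism correspondence to \cite{IQZ20}, so there is no in-paper argument to compare against. Your sketch follows essentially the route those references take --- pass to the graded gentle algebra of a full formal arc system, classify indecomposables as string/band objects matched with graded curves carrying local systems, and describe $\Hom^\bullet$ via (quasi-)graph maps supported on maximal overlaps, whose ends are either transverse crossings or boundary paths. Two caveats. First, a small imprecision: $\mathcal F(S,M,\nu)$ is the perfect derived category $\mathrm{per}(\Lambda_{\mathbb A})$ (equivalently $H^0\mathrm{Tw}(\mathcal F_{\mathbb A})$), not $D^b(\mathrm{mod}\,\Lambda_{\mathbb A})$; these differ when the gentle algebra has infinite global dimension, and the string/band classification you invoke must be the one for perfect complexes. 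Second, the places you flag as ``the main obstacle'' are genuinely the entire content of the theorem: (i) that the object correspondence is a bijection on isotopy classes, (ii) that each maximal overlap end contributes exactly one basis morphism in the stated direction with cohomological degree equal to the intersection index $i_p(c_1,c_2)$ (rather than, say, two morphisms or a morphism of the complementary degree $1-\rho$), and (iii) that bigon removal neither creates nor destroys basis elements, so the count is the geometric (minimal-position) intersection number and is independent of $\mathbb A$. As a citation of known results your outline is a faithful summary; as a self-contained proof it defers precisely the nontrivial steps.
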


{Note that in the above theorem, when we consider morphisms between indecomposable objects, we require  the objects to be supported on curves in minimal position, that is, we minimize the number of intersections of the curves, see for example, \cite[Definition 7]{T22} for the precise definition of minimal position of curves.}

\begin{figure}[H]
\begin{center}
\begin{tikzpicture}[scale=1.5]
\draw (-2.5,-1.5) rectangle (2.5,1.5);
\draw[thick] (-.75,-.25) to [out=0,in=-90] (-.5,0) to [out=90,in=0] (-1,.5) to [out=180,in=90] (-1.25,.25);
\draw (-1.25,.25) to [out=180,in=90] (-1.5,0) to [out=-90,in=180] (-1,-.5) to [out=0,in=-90] (-.75,-.25);
\draw (1.25,-.25) to [out=0,in=-90] (1.5,0) to [out=90,in=0] (1,.5) to [out=180,in=90] (.75,.25);
\draw[thick] (.75,.25) to [out=180,in=90] (.5,0) to [out=-90,in=180] (1,-.5) to [out=0,in=-90] (1.25,-.25);
\draw[red,thick] (-.5,0) to (.5,0);
\draw[Green,thick] (-.567,.25) to [out=30,in=180] (1,.7) to [out=0,in=150] (2.5,0);
\draw[Green,thick] (-2.5,0) to [out=-30,in=180] (-1,-.7) to [out=0,in=-150] (.567,-.25);
\draw[blue,thick] (-.75,.433) to [out=60,in=-150] (2.5,1.5);
\draw[blue,thick] (-2.5,-1.5) to [out=30,in=-120] (.75,-.433);
\draw[orange,thick] (-1,.5) to [out=90,in=-135] (0,1.5);
\draw[orange,thick] (0,-1.5) to [out=45,in=-90] (1,-.5);
\draw[->] (-.5,0) arc (0:15:5mm) node[anchor=west] {$\za_1$};
\draw[->] (-.567,.25) arc (30:45:5mm) node[anchor=south west] {$\zb_1$};
\draw[->] (-.75,.433) arc (60:75:5mm) node[anchor=south] {$\zg_1$};
\draw[->] (.5,0) arc (180:195:5mm) node[anchor=east] {$\za_2$};
\draw[->] (.567,-.25) arc (210:225:5mm) node[anchor=north east] {$\zb_2$};
\draw[->] (.75,-.433) arc (240:255:5mm) node[anchor=north] {$\zg_2$};
\end{tikzpicture}
\end{center}
\caption{Example of a full formal arc system with four arcs on a genus one curve with two boundary components. The boundary paths are $\za_i,\zb_i,\zg_i$, $i\in\{1,2\}$, and their concatenations.}\label{fig:g1b2}
\end{figure}
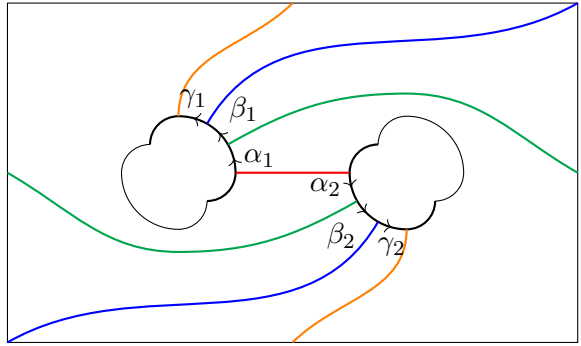

As an example, let $S$ be the genus one surface with two boundary components, i.e. $S$ is $S^1\times S^1$ with two open disks removed.
Choose a single marked point on each boundary component.
The real blow-up $\widehat S$ is depicted in Figure~\ref{fig:g1b2}, together with a possible choice of full formal arc system $\mathbb A$ consisting of four arcs which cut $S$ into a pair of pentagons.
The category $\mathcal F_{\mathbb A}$ thus has four objects and is described by the following quiver:
\begin{equation}\label{eq:quiver1}
{\begin{tikzpicture}[scale=0.4]
\def \radius {4cm}
    \draw [thick,->] (0.2,.2) -- (2.2,.2);
    \draw [thick,->] (.4+2.8,.2) -- (2.4+2.8,.2);
    \draw [thick,->] (.5+5.6,.2) -- (2.5+5.6,.2);

    \draw [thick,->] (0.2,-.2) -- (2.2,-.2);
    \draw [thick,->] (.4+2.8,-.2) -- (2.4+2.8,-.2);
    \draw [thick,->] (.5+5.6,-.2) -- (2.5+5.6,-.2);

    \draw[red] (-.3,0) node {$\bullet$};
    \draw[Green] (2.7,0) node {$\bullet$};
    \draw[blue] (5.6,0) node {$\bullet$};
    \draw[orange] (8.6,0) node {$\bullet$};

    \draw (1.2,0.7) node {$\za_1$};
    \draw (4.2,0.7) node {$\zb_1$};
    \draw (7.1,0.7) node {$\zg_1$};
    \draw (1.2,-0.7) node {$\za_2$};
    \draw (4.2,-0.7) node {$\zb_2$};
    \draw (7.1,-0.7) node {$\zg_2$};
\end{tikzpicture}}
\end{equation}
with relations stating that the composition of any arrow in the top row with an arrow in the bottom row vanishes.
The grading of the arrows depends on a choice of line field $\nu$ and grading of the arcs.
If we let $\nu$ be the constant foliation on the torus (e.g. given everywhere by the horizontal direction in Figure~\ref{fig:g1b2}), then we can choose the grading of the arcs so that all arrows are in degree zero.

\subsection{Exceptional sequences in Fukaya categories of surfaces}\label{subsec:exceptional}

We now recall some background on the theory of exceptional sequences in a general triangulated category $\calt$.

We call an object $X\in\calt$ \emph{exceptional} if
$\Hom_{\calt}(X,X[\neq0])=0$ and $\End_{\calt}(X)= \bf k$.
We call an (ordered) sequence $(X_1,\ldots,X_n)$ of exceptional objects in $\calt$ an \emph{exceptional sequence} if
$\Hom_{\calt}(X_i,X_j[\mathbb{Z}])=0,\ \mbox{ for }\ 1\le j<i\le n$. If in addition
$\thick_\calt(\bigoplus_{i=1}^nX_i)=\calt$ then we say that the sequence is a \emph{full exceptional sequence}.

Since the shift of an exceptional sequence is again an exceptional sequence, there is an action of  $\Z^n$  on the set of full exceptional sequences in $\mathcal T$ defined as follows
\[(\ell_1,\ldots,\ell_n)(X_1,\ldots,X_n):=(X_1[\ell_1],\ldots,X_n[\ell_n]).\]

Let $(X,Y)$ be an exceptional pair in $\calt$.
Define objects $R_{Y}X$ and $L_{X}Y$ in $\calt$ through the following triangles
\begin{equation}\label{equation:right exchange braid}
\xymatrix{
X\ar[r]&\coprod_{\ell\in\mathbb{Z}}D\Hom_{\calt}(X,Y[\ell])\otimes_kY[\ell]
\ar[r]&R_{Y}X[1]\ar[r]&X[1]
}\end{equation}
\begin{equation}\label{equation:left exchange braid}
\xymatrix{
Y[-1]\ar[r]& L_{X}Y[-1]\ar[r]&\coprod_{\ell\in\mathbb{Z}}\Hom_{\calt}(X[\ell],Y)\otimes_kX[\ell]\ar[r]&Y.
}\end{equation}
Then $(Y,R_{Y}X)$ and $(L_{X}Y,X)$ are again exceptional pairs in $\calt$ and we say that 
$R_{Y}X$ is the {\em{right mutation}} of $X$ at $Y$, and $L_{X}Y$ is the {\em{left mutation}} of $Y$ at $X$.

The mutations give rise to an action of $\mathfrak{B}_n$ on the set of full exceptional sequences on $\mathcal T$ as follows, see \cite{GR87}:
For a full exceptional sequence ${\mathbf X}:=(X_1,\ldots,X_n)$ and $1\le i<n$, set
$$\sigma_i{\mathbf X} := (X_1,\ldots,X_{i-1},X_{i+1},R_{X_{i+1}}X_{i},X_{i+2},\ldots,X_n)$$
$$\sigma_i^{-1}{\mathbf X} := (X_1,\ldots,X_{i-1},L_{X_i}X_{i+1},X_{i},X_{i+2},\ldots,X_n).$$

Recall from the introduction that by \textit{full exceptional sequence} we usually mean a full exceptional sequence up to shifts of the objects, i.e. an element of $\exc\calt/\Z^n$.
Crucially, the $\mathfrak{B}_n$-action descends to this quotient, which follows from the fact that the semidirect product $\mathbb{Z}^n\rtimes \mathfrak{B}_n$ acts.

%

In the case of graded marked surfaces $(S,M,\nu)$ where $M\subset \partial S$ and $\nu$ is such that the corresponding graded gentle algebras are concentrated in degree zero (i.e. gentle algebras in the classical sense), the first and third authors classified exceptional sequences in $\mathcal F(S,M,\nu)$ in terms of exceptional dissections on $(S,M)$, see~\cite{CS22}.
This result extends to the case of general $\nu$, see~\cite{CJS22}, where the result is proved inductively using recollements. However, the result for general $\nu$ can also be proved directly  along the lines of~\cite{CS22}. For completeness we include this proof here. 


\begin{theorem}\label{thm:cor-exp}
Let $(S,M,\nu)$ be a graded marked surface with $M\subset\partial S$, and let $\mathbb A$ be an exceptional dissection on $(S,M)$.
Then the sequence of objects in  $\mathcal F(S,M,\nu)$ corresponding to the arcs  in $\mathbb A$ is a full exceptional sequence in $\mathcal F(S,M,\nu)$.
Moreover, this gives rise to a bijection between exceptional dissections on $(S,M)$ and exceptional sequences in $\mathcal F(S,M,\nu)$.
\end{theorem}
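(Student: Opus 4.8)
The plan is to build the bijection between exceptional dissections on $(S,M)$ and full exceptional sequences in $\mathcal F(S,M,\nu)$ by sending an exceptional dissection $\mathbb A = (a_1,\ldots,a_n)$, equipped with a choice of grading on each arc, to the sequence $(X_{a_1},\ldots,X_{a_n})$ of objects supplied by Theorem~\ref{lemma:hom-int}. The shift ambiguity in the choice of grading on the arcs corresponds exactly to the shift ambiguity in the objects, so the map descends to a well-defined map $\mathfrak S_1 \to \exc\mathcal F(S,M,\nu)/\mathbb Z^n$ once we verify that the image is a full exceptional sequence.

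First I would verify the exceptional-sequence property. Each $X_{a_i}$ is exceptional: since $a_i$ is an embedded arc with distinct endpoints, it is not a closed curve, and $\Int(a_i,a_i) = 0$ because an arc in a dissection has no self-intersections, so by the $\Hom$-formula in Theorem~\ref{lemma:hom-int} the only morphism is the identity, giving $\End(X_{a_i}) = \mathbf k$ and $\Hom(X_{a_i},X_{a_i}[\neq 0]) = 0$. For the semi-orthogonality condition $\Hom^\bullet(X_{a_i},X_{a_j}) = 0$ when $j < i$: by Theorem~\ref{lemma:hom-int} the dimension of $\Hom^\bullet(X_{a_i},X_{a_j})$ equals the number of intersections of $a_i$ and $a_j$, counted including common endpoints on the boundary as oriented intersections (boundary paths). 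Arcs in a dissection meet only at endpoints; so the only possible contributions come from common endpoints, where a boundary path from $a_i$ to $a_j$ exists precisely when, going counterclockwise along the marked boundary, one passes from $a_i$ to $a_j$ with the surface on the right. The defining condition of an exceptional dissection --- if $a_j$ follows $a_i$ counterclockwise at a common endpoint then $i < j$ --- is exactly what forces all such boundary paths (hence all oriented intersections) to go from $a_j$ to $a_i$ with $j < i$ only; so for $j < i$ there are none, and $\Hom^\bullet(X_{a_i},X_{a_j}) = 0$. This is the step requiring the most care: one must match the orientation convention in the definition of boundary path on $\widehat S$ (interior on the right) with the counterclockwise ordering convention in the definition of exceptional dissection, and handle the case where $a_i$ and $a_j$ share both endpoints.

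Next I would show the sequence is full, i.e. $\thick(\bigoplus_i X_{a_i}) = \mathcal F(S,M,\nu)$. An exceptional dissection is in particular a full formal arc system on $(S,M)$ (it cuts $S$ into polygons each with exactly one boundary edge), so by the discussion in Section~\ref{sec:fukaya} the direct sum of the $X_{a_i}$ is a formal generator of $\mathcal F_\infty(S,M,\nu)$, whence thick-generation of $\mathcal F(S,M,\nu)$ is immediate. Finally, for bijectivity I would argue as in~\cite{CS22}: injectivity follows because a full exceptional sequence of indecomposables determines its objects up to shift and reordering, and by Theorem~\ref{lemma:hom-int} the underlying unordered set of arcs is recovered from the objects; the order of the arcs is then recovered from the semi-orthogonality, and the grading is recovered from the shift; surjectivity follows because any full exceptional sequence in $\mathcal F(S,M,\nu)$ consists of indecomposable objects, each of the form $X_c$ for a graded arc $c$ (a non-closed curve, since exceptional objects have no negative self-extensions which closed curves with nontrivial local system or with a shift would produce), and the collection of underlying arcs, being a full exceptional collection, must form a full formal arc system satisfying the counterclockwise ordering condition, hence an exceptional dissection --- this last implication again going through the $\Hom$-formula of Theorem~\ref{lemma:hom-int} and the characterization of formal generators. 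The main obstacle, as noted, is the bookkeeping of orientation and index conventions in the semi-orthogonality argument, together with the surjectivity step's reliance on the fact that an exceptional collection of arcs necessarily dissects the surface, which one imports from the structure theory of~\cite{HKK17} and~\cite{CS22}.
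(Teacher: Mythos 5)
Your proposal is correct and follows essentially the same route as the paper: exceptionality and semi-orthogonality via the intersection-counting $\Hom$-formula of Theorem~\ref{lemma:hom-int}, fullness from the observation that an exceptional dissection is a full formal arc system, and surjectivity by showing exceptional indecomposables are supported on embedded non-closed arcs whose collection must dissect the surface (the paper makes this last step precise by ruling out enclosed subsurfaces via the counterclockwise order and invoking maximality together with \cite[Proposition 1.11]{APS19}). The only place you are sketchier than the paper is in excluding immersed arcs with interior self-intersections or coinciding endpoints, which the paper handles explicitly using the index relation $i_p(c_1,c_2)+i_p(c_2,c_1)=1$.
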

\begin{proof}
Let $\mathbb A=(a_1,\ldots,a_n)$. For any choice of grading $\widetilde{a}_i$, denote by $X_{{a}_i}$ the associated object in $\mathcal F(S,M,\nu)$, where we abuse notation and view $a_i$ as its lift on $\widehat S$. Note that because each $a_i$ is an interval arc, there is no local system on it. Since there is no oriented self-intersection on $a_i$, $\dim \Hom^{\bullet}(X_{a_i},X_{a_i})=\Hom(X_{a_i},X_{a_i})=1$ by Theorem \ref{lemma:hom-int}, and thus $X_{a_i}$ is an exceptional object.
Furthermore, since there is no oriented intersection from $a_i$ to $a_j$ when $i > j$, thus again by Theorem \ref{lemma:hom-int}, $\Hom(X_{a_i},X_{a_j}[\mathbb Z])=0$. Therefore the sequence $\mathbf X=(X_{a_1},\ldots,X_{a_n})$ is an exceptional sequence in $\mathcal F(S,M,\nu)$.

To prove that any full exceptional sequence arises from an exceptional dissection, we start with describing indecomposable exceptional objects in $\mathcal F(S,M,\nu)$.
Let $X_{a}$ be an exceptional object associated to a graded curve $(I,a,\tilde{a})$ with a local system, then we claim that $a$ must be an embedded interval curve, and thus there is indeed no local system.

Assume $I=S^1$, then $X_a$ has a self-extension and can thus not be exceptional.
This self-extension is constructed as follows: Let $T\in\mathrm{GL}(d,\mathbf k)$ be the monodromy of the local system defining $X_a$ and consider the object $Y$ with the same underlying graded curve $(I,a,\tilde{a})$ and monodromy
\[
\begin{pmatrix} T & I_d \\ 0 & T \end{pmatrix}
\]
where $I_d$ is the identity matrix.
Then it follows from the construction of objects in $\mathcal F(S,M,\nu)$ from curves with local system that there is a non-split triangle 
$X_a \to Y \to X_a \to X_a[1]$.

Now let $I=[0,1]$ and assume that $a$ is not an embedding, then $p=a(x)=a(y)$, for some $x\neq y\in (0,1)$ or $p=a(0)=a(1)$. If $p=a(x)=a(y)$,  for some $x\neq y\in (0,1)$  then $p$ gives rise to two oriented intersections from $(I,a,\tilde{a})$ to itself with index $\rho$ and $1-\rho$ by equation \eqref{eq:index}, thus there are morphisms from $X_a$ to $X_a[\rho]$ and from $X_a$ to $X_a[1-\rho]$ by Theorem \ref{lemma:hom-int}. So $X_a$ is not exceptional, a contradiction.
If $p=a(0)=a(1)$ with index $\rho$, then $\rho\neq 0$ since $\End(X_a)$ is a field. But then $\Hom(X_a,X_a[\neq 0])$ is non-vanishing, and $X_a$ is still not exceptional. Therefore $a$ is an embedded interval curve.

Let $\mathbf X=(X_1,\ldots,X_n)$ be a full exceptional sequence in $\mathcal F(S,M,\nu)$, where each $X_i$ is associated to a graded curve $(I,a_i,\tilde{a}_i)$ with $I=[0,1]$. Then any two curves $a_i$ and $a_j$ have no interior intersection, since otherwise, Theorem \ref{lemma:hom-int} implies that both $\Hom(X_{a_i},X_{a_j}[\mathbb Z])$ and $\Hom(X_{a_j},X_{a_i}[\mathbb Z])$ are non-zero. Moreover, there is a partial order of the curves given by the boundary paths. This implies that the arcs in $\mathbb A$ cut the surface into subsurfaces each of which contains an (unmarked) boundary segment on its boundary.
Furthermore, $\mathbb{A}$ is maximal, that is, $\mathbb{A}$ has exactly $m+b+2g-2$ arcs, since $\mathbf X$ is full.
Then by \cite[Proposition 1.11]{APS19}, $\mathbb A$ is an admissible dissection and in particular, an exceptional dissection on $(S,M)$ by viewing each $a_i$ as an arc on $(S,M)$.
\end{proof}

\begin{remark}
More generally, one can ask for a classification of semi-orthogonal decompositions of $\mathcal F(S,M,\nu)$ in terms of the geometry of the surface. 
Such a classification is found, at least in the case where $\mathcal F(S,M,\nu)$ is the derived category of an (ungraded) gentle algebra, in the recent work~\cite{KS22}.
\end{remark}


The following theorem shows that the braid group actions on the set of exceptional dissections on $(S,M)$ and on the set of full exceptional sequences in $\mathcal F(S,M,\nu)$ are compatible with each other, where the case of trivial grading is proved in \cite{CS22}.

\begin{theorem}\label{thm:braid}
Let $\mathbf X=(X_1,\ldots,X_n)$ be a full exceptional sequence in $\mathcal F(S,M,\nu)$, which is supported over (a lift of) an exceptional dissection $\mathbb A=(a_1,\ldots,a_n)$ on $(S,M)$. Then $\sigma_i\mathbf X$ is supported over $\sigma_i(\mathbb A)$, and $\sigma^{-1}_i\mathbf X$ is supported over $\sigma^{-1}_i(\mathbb A)$.
\end{theorem}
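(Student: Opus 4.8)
The plan is to avoid identifying the categorical mutation $R_{X_{i+1}}X_i$ with the object attached to the geometric mutation $R_{a_{i+1}}a_i$ by hand, and instead to deduce the compatibility from a general uniqueness property of full exceptional sequences. (A direct approach is also possible: one would identify the cone in~\eqref{equation:right exchange braid} with the surgery described in Definition~\ref{definition:mutations} via the ``cone equals surgery'' principle for Fukaya categories of surfaces; the obstacle there is that this requires going through the three cases of Figure~\ref{figure:mutation of exceptional dissection} and keeping track of gradings throughout, in particular resolving the two-step surgery of Case~II into an iterated cone.)

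First I would isolate the following lemma: \emph{if $(Y_1,\dots,Y_n)$ and $(Y_1',\dots,Y_n')$ are full exceptional sequences in a triangulated category $\calt$ with $Y_j\cong Y_j'$ up to shift for every $j\ne k$, then $Y_k\cong Y_k'$ up to shift.} Since the exceptional-sequence conditions and the thick subcategory generated by a collection of objects are unaffected by shifting individual objects, one may assume $Y_j=Y_j'$ for all $j\ne k$. The subcategory $\mathcal{U}:=\thick(Y_{k+1},\dots,Y_n)^{\perp}$ depends only on the common tail, and fullness of the two sequences identifies it with both $\thick(Y_1,\dots,Y_k)$ and $\thick(Y_1,\dots,Y_{k-1},Y_k')$; inside $\mathcal{U}$, the subcategory $\mathcal{V}:={}^{\perp}\thick(Y_1,\dots,Y_{k-1})\cap\mathcal{U}$ depends only on the common head and $\mathcal{U}$, is generated by $Y_k$, and contains $Y_k'$. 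But $\thick(Y_k)$, generated by an exceptional object, has the shifts $Y_k[\ell]$ as its only indecomposable objects; as $Y_k'$ is an exceptional --- hence indecomposable --- object of $\mathcal{V}=\thick(Y_k)$, we conclude $Y_k'\cong Y_k[\ell]$ for some $\ell$.

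The theorem then follows quickly. The sequence $\sigma_i\mathbf X$ is a full exceptional sequence: it is an exceptional sequence by the definition of the $\mathfrak B_n$-action recalled in Section~\ref{subsec:exceptional}, and the defining triangle~\eqref{equation:right exchange braid} gives $\thick(\sigma_i\mathbf X)=\thick(\mathbf X)=\mathcal F(S,M,\nu)$. On the geometric side, $\sigma_i\mathbb A$ is again an exceptional dissection on $(S,M)$ by Theorem~\ref{theorem:mutation}, so by Theorem~\ref{thm:cor-exp} the sequence $\mathbf Y$ of objects of $\mathcal F(S,M,\nu)$ attached to the arcs of $\sigma_i\mathbb A$ is a full exceptional sequence. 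Comparing term by term: in position $i$ both $\sigma_i\mathbf X$ and $\mathbf Y$ carry $X_{i+1}\cong X_{a_{i+1}}$, and in each position $j\notin\{i,i+1\}$ both carry $X_j\cong X_{a_j}$, using that $\mathbf X$ is supported over $\mathbb A$. Hence the lemma applied with $k=i+1$ yields $R_{X_{i+1}}X_i\cong X_{R_{a_{i+1}}a_i}$ up to shift, i.e. $\sigma_i\mathbf X$ is supported over $\sigma_i\mathbb A$. The statement for $\sigma_i^{-1}$ is proved in exactly the same way, using that $\sigma_i^{-1}\mathbb A$ is an exceptional dissection (Theorem~\ref{theorem:mutation}) and applying the lemma with $k=i$.

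The step requiring the most care is the lemma, and inside it the verifications that $\mathcal{U}$ and $\mathcal{V}$ really do depend only on the common tail and head and that $\mathcal{V}$ has (up to shift) a unique exceptional generator; these rest on the standard dictionary between semiorthogonal decompositions and orthogonal complements in the presence of a full exceptional sequence, together with the fact that the thick closure of an exceptional object is equivalent to the category of perfect complexes over $\mathbf k$. Everything else is bookkeeping with Theorems~\ref{theorem:mutation} and~\ref{thm:cor-exp}.
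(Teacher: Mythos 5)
Your proof is correct, but it takes a genuinely different route from the paper's. The paper argues directly and geometrically: it reduces to an exceptional pair ($n=2$), splits into cases according to the number of boundary paths from $a_1$ to $a_2$, and identifies the cone in the defining triangle \eqref{equation:right exchange braid} with the arc obtained by smoothing --- in the two-intersection case via the octahedral axiom and a homotopy cartesian square (Figure~\ref{figure:comp-mutation2}) --- while tracking the gradings explicitly. You instead sidestep the cone-equals-surgery computation entirely by invoking the classical uniqueness of the missing term of a full exceptional sequence (essentially Bondal's observation that $\thick(Y_{k+1},\dots,Y_n)^{\perp}$ and ${}^{\perp}\thick(Y_1,\dots,Y_{k-1})$ are determined by the complementary terms, via admissibility of subcategories generated by exceptional collections --- valid here since $\mathcal F(S,M,\nu)$ is Hom-finite), combined with Theorem~\ref{theorem:mutation} and Theorem~\ref{thm:cor-exp}, both of which are available at this point, so there is no circularity. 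Your argument is shorter and avoids the case analysis and grading bookkeeping, but it determines $R_{X_{i+1}}X_i$ only up to shift --- which suffices, since the paper works with exceptional sequences modulo the $\mathbb Z^n$-action throughout --- whereas the paper's computation pins down the grading of the mutated arc exactly, and its explicit matching of distinguished triangles in $\mathcal F(S,M,\nu)$ with triangles on the surface is of independent interest. Note also that your argument genuinely requires fullness of the sequence, while the paper's local computation applies to an arbitrary exceptional pair.
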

\begin{proof}
We prove the case for $\sigma_i$, while the case for $\sigma^{-1}_i$ is similar.
Note that we only need to consider any exceptional pair. So we assume $n=2$ and $i=1$. In this case, $\sigma_1(\mathbf X)=(X_2,R_{X_{2}}X_{1})$, $\sigma_1(\mathbb A)=(a_2,R_{a_{2}}a_{1})$, and we have to show that $R_{X_{2}}X_{1}$ is supported over $R_{a_{2}}a_{1}$.
There are three cases depending on the number of boundary paths from $a_1$ to $a_2$. Here we abuse notation and view $a_i$ as an arc on $S$ as well as an arc on $\widehat S$.

The proof is clear when there is no boundary path between them.
Suppose now that there is exactly one boundary path $\za_1$ from $a_1$ to $a_2$ and denote the corresponding morphism also as 
$\za_1: X_1 \rightarrow X_2[|\za_1|]$. 
Then the object $R_{X_{2}}X_{1}$ arises from the triangle
\begin{equation}\label{equ:tri}
\xymatrix{
X_1\ar[r]^{\za_1}&X_2[|\za_1|]\ar[r]^{\za_2}&R_{X_{2}}X_{1}\ar[r]^{\za_3}&X_1[1].
}
\end{equation}
On the other hand, by the triangle structure of $\mathcal F(S,M,\nu)$ \cite{HKK17}, the arc associated to the third term in above triangle is a concatenation of $a_1$, $\za_1$ and $a_2$, see Figure \ref{figure:comp-mutation0}, which is exactly the arc $R_{a_{2}}a_{1}$ after collapsing it to $S$, see the first case in Figure \ref{figure:mutation of exceptional dissection}.
Note that here $a_2$ is considered up to isotopy with respect to the marked boundary, and the grading of $R_{a_{2}}a_{1}$ is the grading such that $|\za_2|=-|\za_1|$ for the boundary path $\za_2: a_2 \rightarrow R_{a_{2}}a_{1}$.

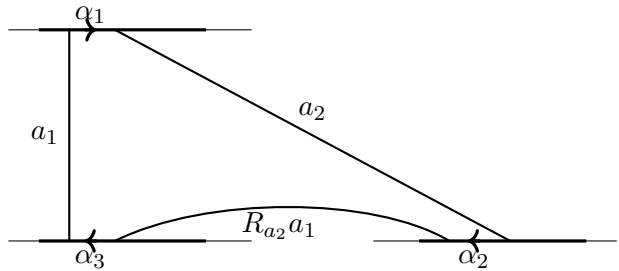
\begin{figure}[H]
\begin{center}
{\begin{tikzpicture}[scale=0.4]

\draw[-]  (-10,-3.5)to(-2,-3.5);
\draw[-]  (2,-3.5)to(10,-3.5);
\draw[-]  (-10,3.5)to(-2,3.5);

\draw[-,thick]  (-8,3.5)to(-8,-3.5);
\draw[-,thick]  (-6.5,3.5)to(6.5,-3.5);

\draw[very thick]  (-9,3.5)to(-3.5,3.5);
\draw[very thick]  (-9,-3.5)to(-3.5,-3.5);
\draw[very thick]  (9,-3.5)to(3.5,-3.5);

\draw[thick](-6.5,-3.5)..controls (-3.5,-2) and (2,-2)..(4.5,-3.5);

\draw (-8.8,0) node {$a_1$};
\draw (0,.8) node {$a_2$};
\draw (-1.1,-3) node {$R_{a_{2}}a_{1}$};

\draw (-7.3,4) node {$\za_1$};

\draw[->,very thick]  (-7.5,3.5)to(-7.1,3.5);

\draw (5.3,-4.1) node {$\za_2$};
\draw[->,very thick]  (5.5,-3.5)to(5.1,-3.5);

\draw (-7.3,-4.1) node {$\za_3$};
\draw[<-,very thick]  (-7.5,-3.5)to(-7.1,-3.5);
\end{tikzpicture}}
\end{center}

\begin{center}
\caption{The case when there is only one boundary path $\za_1$ from $a_1$ to $a_2$ in an exceptional pair $(a_1,a_2)$. Such a `triangle' on the surface corresponds to a distinguished triangle \eqref{equ:tri} in $\mathcal F(S,M,\nu)$ for some proper choice of grading.}\label{figure:comp-mutation0}
\end{center}
\end{figure}

Now assume that there are two boundary paths $\za_1$ and $\za_2$ from $a_1$ to $a_2$, see the Figure \ref{figure:comp-mutation1}. The morphisms associated to $\za_1$ and $\za_2$ give rise to two triangles in $\mathcal F(S,M,\nu)$, see the Figure \ref{figure:comp-mutation2}.
More precisely, let $a_3$ and $a_4$ be the concatenation of $a_1$, $\za_1$, $a_2$, and $a_1$, $\za_2$, $a_2$ respectively, and let $X_3$ and $X_4$ be the objects supported on $a_3$ and $a_4$ respectively. In particular, the grading on $a_3$ and $a_4$ are the grading such that $|\za_3|=|\za_4|=1$ for the boundary paths $\za_3: a_3\rightarrow a_1$ and $\za_4: a_4\rightarrow a_1$.

Then we complete the two triangles as the commutative diagram in Figure \ref{figure:comp-mutation2} by using the octahedral  axiom. In particular, (up to isotopy) the concatenation of $a_3$, $\za_3\za_2$, and $a_2$ coincides with the concatenation of $a_4$, $\za_4\za_1$, and $a_2$, which is denoted by $a_5$. Let $X_5$ be the object supported over $a_5$ with proper grading.

Note that the middle square in Figure \ref{figure:comp-mutation2} is a homotopy cartesian square, see for example \cite[Lemma 1.4.3]{N}.
Therefore we have the following distinguished triangle in $\mathcal F(S,M,\nu)$
\begin{equation}\label{equ:tri2}
X_1\s{(\za_1,\za_2)}\longrightarrow X_2[|\za_1|]\oplus X_2[|\za_2|]\s{\left(
                       \begin{smallmatrix}
                         -\za_5 \\
                         \za_7\\
                       \end{smallmatrix}
                     \right)}\longrightarrow X_5\longrightarrow {X_1}[1].
\end{equation}
Compare with \eqref{equation:right exchange braid}, we have $R_{X_2}X_1=X_5$. On the other hand, $a_5$ coincides with $R_{a_2}a_1$ after collapse to an arc on $(S,M)$. Thus we have proved the statment.

\begin{figure}[H]
\begin{center}
{\begin{tikzpicture}[scale=0.4]

\draw[-]  (-10,-3.5)to(-2,-3.5);
\draw[-]  (-10,3.5)to(-2,3.5);
\draw[-]  (2,-3.5)to(10,-3.5);
\draw[-]  (2,3.5)to(10,3.5);

\draw[-,thick]  (-8,3.5)to(-8,-3.5);
\draw[-,thick]  (8,3.5)to(8,-3.5);

\draw[-,thick]  (6.5,3.5)to(-6.5,-3.5);
\draw[-,thick]  (-6.5,3.5)to(6.5,-3.5);

\draw[very thick]  (-9,3.5)to(-3.5,3.5);
\draw[very thick]  (9,3.5)to(3.5,3.5);
\draw[very thick]  (-9,-3.5)to(-3.5,-3.5);
\draw[very thick]  (9,-3.5)to(3.5,-3.5);

\draw[thick](-4.5,3.5)..controls (-2,2) and (2,2)..(4.5,3.5);
\draw[thick](-4.5,-3.5)..controls (-2,-2) and (2,-2)..(4.5,-3.5);

\draw (-9,0) node {$a_2$};
\draw (9,0) node {$a_2$};
\draw (0,3) node {$a_3$};
\draw (0,-3) node {$a_4$};
\draw (5,2) node {$a_1$};
\draw (5,-2) node {$a_5$};

\draw (-7.3,4) node {$\za_5$};
\draw (-5.3,4) node {$\za_6$};
\draw[->,very thick]  (-7.5,3.5)to(-7.1,3.5);
\draw[->,very thick]  (-5.5,3.5)to(-5.1,3.5);

\draw (7.3,4) node {$\za_2$};
\draw (5.3,4) node {$\za_3$};
\draw[<-,very thick]  (7.5,3.5)to(7.1,3.5);
\draw[<-,very thick]  (5.5,3.5)to(5.1,3.5);

\draw (7.3,-4.1) node {$\za_7$};
\draw (5.3,-4.1) node {$\za_8$};
\draw[->,very thick]  (7.5,-3.5)to(7.1,-3.5);
\draw[->,very thick]  (5.5,-3.5)to(5.1,-3.5);

\draw (-7.3,-4.1) node {$\za_1$};
\draw (-5.3,-4.1) node {$\za_4$};
\draw[<-,very thick]  (-7.5,-3.5)to(-7.1,-3.5);
\draw[<-,very thick]  (-5.5,-3.5)to(-5.1,-3.5);
\end{tikzpicture}}
\end{center}

\begin{center}
\caption{The case when there are two boundary paths $\za_1$ and $\za_2$ from $a_1$ to $a_2$ in an exceptional pair $(a_1,a_2)$.}\label{figure:comp-mutation1}
\end{center}
\end{figure}
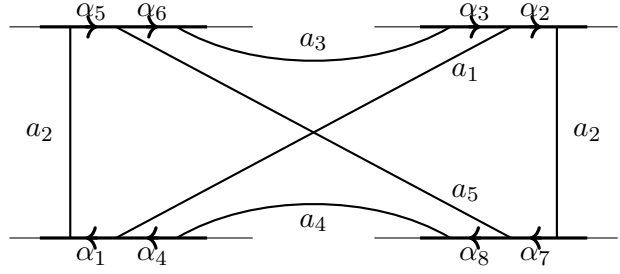

\begin{figure}[H]
\begin{center}
{\begin{tikzpicture}[scale=1.6]
\draw (1,0) node {$X_4$};
\draw (3,0) node {$X_4$};

\draw (1,1) node {$X_2[\za_2|]$};
\draw (3,1) node {$X_5$};
\draw (5,1) node {$X_3$};
\draw (-1,1) node {$X_3[-1]$};

\draw (1,2) node {$X_1$};
\draw (3,2) node {$X_2[\za_1|]$};
\draw (5,2) node {$X_3$};
\draw (-1,2) node {$X_3[-1]$};

\draw (1,3) node {$X_4[-1]$};
\draw (3,3) node {$X_4[-1]$};

\draw (2,2.12) node {$\za_1$};
\draw (2,1.12) node {$\za_7$};

\draw (4,2.12) node {$\za_5\za_6$};
\draw (4,1.12) node {$\za_6$};

\draw (1.32,.5) node {$\za_7\za_8$};
\draw (1.2,1.5) node {$\za_2$};
\draw (3.2,1.5) node {$\za_5$};
\draw (3.2,.5) node {$\za_8$};

\draw (1.2,2.5) node {$\za_4$};
\draw (0.1,2.12) node {$\za_3$};
\draw (3.3,2.5) node {$\za_4\za_1$};
\draw (0.1,1.2) node {$\za_3\za_2$};
\draw[-] (-1.05,1.2) -- (-1.05,1.8);
\draw[-] (-1,1.2) -- (-1,1.8);

\draw[-] (5.05,1.2) -- (5.05,1.8);
\draw[-] (5,1.2) -- (5,1.8);

\draw[-] (1.5,-0.02) -- (2.4,-0.02);
\draw[-] (1.5,0.03) -- (2.4,0.03);

\draw[-] (1.7,3.02) -- (2.3,3.02);
\draw[-] (1.7,2.97) -- (2.3,2.97);

\draw[->] (1.4,2) -- (2.5,2);
\draw[->] (1.4,1) -- (2.5,1);

\draw[->] (-0.3,2) -- (.5,2);
\draw[->] (-0.3,1) -- (.5,1);

\draw[->] (3.5,2) -- (4.5,2);
\draw[->] (3.5,1) -- (4.5,1);

\draw[->] (1,0.7) -- (1,0.2);
\draw[->] (1,1.7) -- (1,1.2);
\draw[->] (1,2.7) -- (1,2.2);

\draw[->] (3,0.7) -- (3,0.2);
\draw[->] (3,1.7) -- (3,1.2);
\draw[->] (3,2.7) -- (3,2.2);
\end{tikzpicture}}
\end{center}
\begin{center}
\caption{The four distinguished triangles in the commutative diagram correspond to the four `triangles' in Figure \ref{figure:comp-mutation1}.}\label{figure:comp-mutation2}
\end{center}
\end{figure}

\end{proof}

We have the following direct corollary.
\begin{corollary}
\label{cor:braid}
The group $\mathfrak{B}_n$ acts transitively on the set of full exceptional sequences in $\mathcal F(S,M,\nu)$ if and only if it acts transitively on the set of exceptional dissections on $(S,M)$.
\end{corollary}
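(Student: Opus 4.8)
The plan is to deduce Corollary~\ref{cor:braid} directly from Theorem~\ref{thm:cor-exp} and Theorem~\ref{thm:braid}, with essentially no further geometric input. First I would recall that Theorem~\ref{thm:cor-exp} furnishes a bijection
\[
\Psi:\{\text{exceptional dissections on }(S,M)\}\;\longrightarrow\;\{\text{full exceptional sequences in }\mathcal F(S,M,\nu)\}/\Z^n,
\]
sending a dissection $\mathbb A=(a_1,\ldots,a_n)$ to the sequence $\mathbf X=(X_{a_1},\ldots,X_{a_n})$ of objects supported over (lifts of) the arcs; here I must keep in mind the standing convention that exceptional sequences are taken up to shift, so that the target is the quotient $\exc\mathcal F(S,M,\nu)/\Z^n$ on which $\mathfrak B_n$ genuinely acts. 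Then Theorem~\ref{thm:braid} says precisely that $\Psi$ is $\mathfrak B_n$-equivariant on generators: $\sigma_i\mathbf X$ is supported over $\sigma_i(\mathbb A)$ and $\sigma_i^{-1}\mathbf X$ over $\sigma_i^{-1}(\mathbb A)$, i.e. $\Psi(\sigma_i^{\pm1}\mathbb A)=\sigma_i^{\pm1}\Psi(\mathbb A)$.

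Next I would assemble these into the statement that $\Psi$ is an isomorphism of $\mathfrak B_n$-sets. Equivariance on the generators $\sigma_1^{\pm1},\ldots,\sigma_{n-1}^{\pm1}$, together with the fact (from Theorem~\ref{theorem:mutation} and the discussion in Section~\ref{subsec:exceptional}) that both sides carry genuine $\mathfrak B_n$-actions, forces $\Psi(g\cdot\mathbb A)=g\cdot\Psi(\mathbb A)$ for every word $g$ in the generators, hence for every $g\in\mathfrak B_n$. Finally, a bijection of $\mathfrak B_n$-sets carries orbits bijectively to orbits, so the action on one side is transitive if and only if it is transitive on the other: if $\mathfrak B_n$ is transitive on dissections and $\mathbf X,\mathbf X'$ are two full exceptional sequences, write $\mathbf X=\Psi(\mathbb A)$, $\mathbf X'=\Psi(\mathbb A')$, choose $g$ with $g\cdot\mathbb A=\mathbb A'$, and then $g\cdot\mathbf X=\Psi(g\cdot\mathbb A)=\Psi(\mathbb A')=\mathbf X'$; the converse direction is symmetric using $\Psi^{-1}$.

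There is essentially no serious obstacle here—the corollary is a formal consequence of the two theorems just proved. The only point requiring a modicum of care is the bookkeeping of shifts: the bijection of Theorem~\ref{thm:cor-exp} is between dissections and exceptional sequences without reference to shift, while the $\mathfrak B_n$-action is only well-defined after quotienting by $\Z^n$, so I would make explicit at the outset that the comparison takes place in $\exc\mathcal F(S,M,\nu)/\Z^n$ and that Theorem~\ref{thm:braid}, which records the grading of the mutated arc only up to the choice pinned down there, is exactly the statement of equivariance modulo these shifts. With that observation in place the proof is a one-line application of ``bijective $\mathfrak B_n$-maps preserve transitivity.''
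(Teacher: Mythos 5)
Your proposal is correct and matches the paper's approach: the paper gives no written proof, simply presenting the corollary as a direct consequence of Theorem~\ref{thm:cor-exp} (the bijection) and Theorem~\ref{thm:braid} (equivariance on generators), which is precisely the argument you spell out, including the correct remark that everything takes place modulo the $\Z^n$-action by shifts.
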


\section{Examples with non-transitive braid group action}\label{section:counterexamples}

The set of exceptional dissections on a marked surface $(S,M)$, $\mathfrak S_1$, has an action of both the mapping class group of $S$, $\mathrm{Mod}(S)$, and the braid group.
{ Since the braid group acts transitively on the set $\mathcal H$ of Hurwitz systems of type $(S,M)$ (Theorem~\ref{thm:kluitmann}), and $\mathfrak S_1/\mathrm{Mod}(S)\cong\mathcal H$, it follows that the combined action of both groups on $\mathfrak S_1$ is transitive}. So in order to answer the question of transitivity of just the braid group action it is useful to understand the relation between the two groups.
This is the subject of \textit{Birman--Hilden theory}, where a common assumption is \textit{regularity} of the branched covering, i.e. the group of deck transformations acts transitively.
Since we also require our coverings to be simple, this forces us to restrict to branched \textit{double} covers of the disk.

Thus, let $\mathfrak{p}:S\to D$ be a double cover of the disk, branched over $p_1,\ldots,p_n$.
The \textit{mapping class group} of $S$, $\mathrm{Mod}(S)$, is the group of orientation preserving diffeomorphisms $S\to S$ fixing $\partial S$, modulo isotopy.
$\mathrm{Mod}(S)$ contains a subgroup $\mathrm{SMod}(S,\mathfrak p)$, the \textit{symmetric mapping class group}, of those mapping classes which contain a \textit{symmetric} diffeomorphism, i.e. obtained as a lift of a diffeomorphism $D\to D$.
On the other hand, $\mathrm{Mod}(D\setminus \{p_1,\ldots,p_n\})\cong \mathfrak{B}_n$.
Any symmetric diffeomorphism thus gives an element of $\mathfrak{B}_n$, but to have a well-defined map $\mathrm{SMod}(S,\mathfrak p)\to\mathfrak{B}_n$ we need to to know that two isotopic symmetric diffeomorphisms are isotopic through symmetric diffeomorphisms.
This question is answered positively by a theorem of Birman--Hilden~\cite{BH73}, see also \cite{FM12,MW21} for modern accounts.

\begin{theorem}[Birman--Hilden]
Let $\mathfrak p:S\to D$ be a double cover of the disk, branched over $n$ points, then $\mathrm{SMod}(S,\mathfrak p)\cong \mathfrak{B}_n$ via the map which sends a symmetric diffeomorphism to its corresponding map $D\to D$.
\end{theorem}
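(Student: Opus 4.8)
The plan is to prove the Birman--Hilden theorem for a branched double cover $\mathfrak p\colon S\to D$ of the disk by establishing two things: that the natural map $\mathrm{SMod}(S,\mathfrak p)\to\mathfrak B_n$ induced by ``pushing a symmetric diffeomorphism down to $D$'' is well-defined and surjective, and that it is injective. Throughout I would work with the hyperelliptic involution $\iota\colon S\to S$, the deck transformation of $\mathfrak p$, whose fixed points are exactly the $n$ ramification points; a diffeomorphism $\phi\colon S\to S$ fixing $\partial S$ is \emph{symmetric} precisely when it commutes with $\iota$, and then it descends to a diffeomorphism $\overline\phi\colon D\to D$ fixing $\partial D$ and permuting $\{p_1,\ldots,p_n\}$, i.e. an element of $\mathrm{Mod}(D\setminus\{p_1,\ldots,p_n\})\cong\mathfrak B_n$. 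Conversely, any diffeomorphism of $D$ fixing the boundary and permuting the branch points lifts to a symmetric diffeomorphism of $S$ (the obstruction to lifting lives in the monodromy, but fixing $\partial D$ kills it since $\pi_1$ of the branched-point complement is generated by loops around the $p_i$, which the diffeomorphism respects), so surjectivity is essentially formal; the half-twist generators $\sigma_i$ lift to the symmetric mapping classes that are (roughly) Dehn twists about the curves $\mathfrak p^{-1}(e_i)$.

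The two genuinely substantive points are: (i) \emph{well-definedness of the downward map} --- if $\phi_0,\phi_1$ are symmetric diffeomorphisms that are isotopic \emph{in} $\mathrm{Mod}(S)$, then they are isotopic \emph{through symmetric diffeomorphisms}, so that $\overline{\phi_0}$ and $\overline{\phi_1}$ represent the same braid; and (ii) injectivity --- if a symmetric $\phi$ descends to something isotopic to the identity in $\mathfrak B_n$, then $\phi$ is isotopic to the identity in $\mathrm{Mod}(S)$. Point (i) is the heart of the Birman--Hilden phenomenon. I would prove it via the ``Alexander method'': fix a symmetric collection of arcs/curves on $S$ (e.g. the preimages of a spanning collection of arcs in $D$ based at $-1$) that fills $S$ and is $\iota$-invariant; an isotopy in $\mathrm{Mod}(S)$ between $\phi_0$ and $\phi_1$ can be realized by moving this collection, and by averaging/straightening with respect to $\iota$ (using that $S$ has a hyperbolic or at least an $\iota$-equivariant metric, and that geodesic representatives are unique hence automatically $\iota$-invariant when the isotopy class is) one upgrades the isotopy to an equivariant one. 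Equivalently, one can cite the standard statement that for a finite group $G$ acting on a surface, $\mathrm{SMod}$ is the preimage of the ``liftable'' subgroup and the quotient-by-$G$ map on mapping class groups is well-defined; the branched-cover-of-the-disk case is the original Birman--Hilden setting where all of $\mathfrak B_n$ is liftable.

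For (ii), suppose $\phi$ is symmetric and $\overline\phi$ is isotopic to $\mathrm{id}_D$ rel $\partial D$ and fixing branch points setwise; lift this isotopy (again no monodromy obstruction) to an isotopy of $\phi$ to a symmetric diffeomorphism $\psi$ that projects to $\mathrm{id}_D$, i.e. $\psi$ is fiber-preserving over the identity, hence on the complement of the ramification fibers $\psi$ is, on each fiber $\{$two points$\}$, either the identity or the swap --- and since $\psi$ fixes $\partial S$ it must be the identity on the fiber over $-1$, hence (by continuity on the connected base minus branch points) the identity on a dense set, hence $\psi=\mathrm{id}_S$. Therefore $\phi$ is isotopic to the identity and the map is injective. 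Assembling (i), (ii), surjectivity, and the obvious fact that the map is a homomorphism (composition downstairs) gives the isomorphism $\mathrm{SMod}(S,\mathfrak p)\cong\mathfrak B_n$.

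The main obstacle I anticipate is making the averaging step in (i) fully rigorous: naively ``averaging an isotopy over $\mathbb Z/2$'' does not literally produce a diffeomorphism, so one needs either the geodesic-straightening / unique-geodesic-representative argument (which requires care about the behavior near the ramification points and near $\partial S$, e.g. working on a doubled or capped surface and then restricting) or an appeal to the general equivariant isotopy-extension machinery. I would likely just invoke the modern treatment --- the argument is exactly \cite[Theorem 9.2 and the ``Birman--Hilden theorem'']{FM12} together with \cite{MW21} --- and include only the short lifting and fiberwise-rigidity arguments for (ii) and surjectivity in detail, since those are elementary and self-contained.
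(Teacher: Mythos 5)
The paper does not prove this statement at all --- it is quoted verbatim from Birman--Hilden \cite{BH73}, with \cite{FM12,MW21} cited as modern accounts --- so there is no in-paper argument to compare yours against. Your sketch is consistent with the standard proof in those references: you correctly isolate the one genuinely hard point (upgrading an isotopy between symmetric diffeomorphisms to an isotopy \emph{through} symmetric diffeomorphisms, which you, like the paper, ultimately delegate to \cite{FM12,MW21}), and your elementary arguments for surjectivity (the $\mathbb Z/2$ monodromy is total winding number mod $2$ around the punctures, hence preserved by any boundary-fixing diffeomorphism permuting them, so every braid lifts) and for injectivity (a symmetric diffeomorphism covering $\mathrm{id}_D$ is a deck transformation, and the nontrivial one does not fix $\partial S$ pointwise) are sound.
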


The map $\mathfrak{B}_n\to \mathrm{Mod}(S)$ sends a standard generator represented by a half-twist along an embedded curve $\alpha$ in $D$ connecting two branch points to the Dehn twist along the simple closed curve $\tilde{\alpha}$ in $S$ whose image under $\mathfrak p$ is $\alpha$.

When is $\mathrm{SMod}(S,\mathfrak p)=\mathrm{Mod}(S)$?
This question is studied for regular coverings of surfaces with boundary in work of Ghaswala and McLeay~\cite{GM20}.
As a special case we have the following fact.

\begin{theorem}[Ghaswala--McLeay]
Let $\mathfrak p:S\to D$ be a double cover of the disk, branched over $n$ points.
If $n\leq 3$, then $\mathrm{SMod}(S,\mathfrak p)=\mathrm{Mod}(S)$.
If $n\geq 4$, then $\mathrm{SMod}(S,\mathfrak p)$ is a subgroup of infinite index in $\mathrm{Mod}(S)$
\end{theorem}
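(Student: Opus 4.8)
I would begin by recording the topology of $S$ in terms of $n$. Since $\mathfrak p$ is a double cover of $D$ branched over $n$ interior points, Riemann--Hurwitz gives $\chi(S)=2-n$, while $\partial S\to\partial D$ is the unbranched double cover of the circle with monodromy $\tau^n$ (with $\tau$ the unique transposition), so $\partial S$ is connected for $n$ odd and has two components for $n$ even. Hence $S=\Sigma_{g,b}$ with $2g+b=n$, where $(g,b)=(\tfrac{n-1}{2},1)$ if $n$ is odd and $(g,b)=(\tfrac{n-2}{2},2)$ if $n$ is even. By the Birman--Hilden theorem recalled above, $\mathrm{SMod}(S,\mathfrak p)\cong\mathfrak B_n$, and by the description of $\mathfrak B_n\to\mathrm{Mod}(S)$ preceding the statement, under the inclusion $\mathrm{SMod}(S,\mathfrak p)\hookrightarrow\mathrm{Mod}(S)$ the standard generator $\sigma_i$ maps to the Dehn twist $T_{c_i}$ along the simple closed curve $c_i\subset S$ lifting an arc from $p_i$ to $p_{i+1}$, the curves $c_1,\dots,c_{n-1}$ forming a chain. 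The plan is then to treat $n\le 3$ and $n\ge 4$ by different methods.

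For $n\le 3$ the surface is a disk, an annulus, or the once-holed torus $\Sigma_{1,1}$, whose mapping class groups are classically the trivial group, the infinite cyclic group generated by the core twist, and $\mathfrak B_3$ generated by the two chain twists, respectively. In each case the image of $\mathrm{SMod}(S,\mathfrak p)$ in $\mathrm{Mod}(S)$ already contains a generating set (nothing for the disk, $T_{c_1}$ for the annulus, $T_{c_1}$ and $T_{c_2}$ for $\Sigma_{1,1}$), so the inclusion is surjective and $\mathrm{SMod}(S,\mathfrak p)=\mathrm{Mod}(S)$.

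For $n\ge 4$ I would argue by cohomological dimension. The braid group $\mathfrak B_n$ is torsion-free of cohomological dimension $n-1$ (its classifying space, the unordered configuration space of $n$ points in $\mathbb C$, is homotopy equivalent to a finite complex of that dimension, and $\mathfrak B_n$ contains $\mathbb Z^{n-1}$), so $\mathrm{cd}(\mathrm{SMod}(S,\mathfrak p))=n-1$. On the other hand $n\ge 4$ forces $g\ge 1$, and Harer's computation of the virtual cohomological dimension of mapping class groups of surfaces with nonempty boundary gives $\mathrm{vcd}(\mathrm{Mod}(\Sigma_{g,b}))=4g-4+2b=2(2g+b)-4=2n-4$. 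Since $2n-4>n-1$ for $n\ge 4$, and since the virtual cohomological dimension is a commensurability invariant (a finite-index subgroup of a virtually torsion-free group of finite vcd has the same vcd, by Serre), the subgroup $\mathrm{SMod}(S,\mathfrak p)$ cannot be of finite index in $\mathrm{Mod}(S)$, hence has infinite index.

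The only nontrivial external inputs are the Birman--Hilden identification (already in hand), the value $\mathrm{cd}(\mathfrak B_n)=n-1$, and Harer's bounded vcd formula; the rest is the inequality $2n-4>n-1$ together with commensurability invariance of vcd. I expect the one delicate point to be purely the bookkeeping that turns the topology of $S$ (the value of $b\in\{1,2\}$ according to the parity of $n$) into the uniform value $\mathrm{vcd}(\mathrm{Mod}(S))=2n-4$, which is precisely what places the threshold at $n=3$. If one prefers to avoid Harer, one can instead observe that $\mathrm{Mod}(\Sigma_{g,b})$ contains the free abelian group generated by Dehn twists along a maximal system of disjoint pairwise non-isotopic simple closed curves, of rank $3g-3+2b\ge n>n-1$ for $n\ge 4$, whereas $\mathfrak B_n$ contains no $\mathbb Z^n$ since its cohomological dimension is $n-1$; as the maximal rank of a free abelian subgroup is also a commensurability invariant, this again excludes finite index.
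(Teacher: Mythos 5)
Your argument is correct, but note that the paper does not actually prove this statement: it is imported verbatim from Ghaswala--McLeay~\cite{GM20}, who establish a more general criterion for when $\mathrm{SMod}=\mathrm{Mod}$ for regular branched covers of surfaces with boundary, so there is no internal proof to compare against. Your proposal is a legitimate self-contained derivation of the special (hyperelliptic) case actually used here. The topological bookkeeping ($\chi(S)=2-n$, boundary monodromy $\tau^n$, hence $2g+b=n$ with $b\in\{1,2\}$ by parity) is right and matches the surfaces the paper lists for $n\le 4$. For $n\le 3$ the surjectivity check via generators is standard and complete, granted the change-of-coordinates fact that any two curves in $\Sigma_{1,1}$ meeting once have twists generating $\mathrm{Mod}(\Sigma_{1,1})\cong\mathfrak B_3$. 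For $n\ge 4$ the comparison $\mathrm{cd}(\mathfrak B_n)=n-1<2n-4=\mathrm{vcd}(\mathrm{Mod}(\Sigma_{g,b}))$ together with Serre's invariance of (v)cd under finite-index passage does rule out finite index; the one point to be careful about is the boundary-versus-puncture convention in Harer's formula (each boundary component fixed pointwise contributes $2$ because of the central boundary twist), which you get right, and your fallback via the rank-$(3g-3+2b)\ge n$ free abelian subgroup of commuting Dehn twists (pants curves plus boundary-parallel curves) is airtight and avoids Harer entirely. The trade-off versus~\cite{GM20} is that your argument only detects \emph{infinite} index through a coarse invariant and would not by itself recover their finer structural results, but for the dichotomy as stated it is complete and arguably more elementary.
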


Note that in the cases $n=1,2,3$, $S$ is the disk, the annulus, or the torus with one boundary component, respectively.
In the case $n=4$, $S$ is a torus with two boundary components.
Combining the above with our previous results, we obtain the following.

\begin{corollary}
Let $\mathfrak p:S\to D$ be a double cover of the disk, branched over $n$ points.
Then $\mathfrak{B}_n$ acts transitively on  exceptional dissections of $(S,p^{-1}(-1))$ iff $n\leq 3$.
\end{corollary}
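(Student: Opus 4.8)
The plan is to identify, after fixing a base dissection, the braid group action on exceptional dissections with the action of the symmetric mapping class group $\mathrm{SMod}(S,\mathfrak p)$, and then invoke the Ghaswala--McLeay theorem quoted above.

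First I would fix a double cover $\mathfrak p:(S,M)\to D$ branched over $p_1,\dots,p_n$ together with matching paths $\mathbb B$, and set $\mathbb A_0:=\mathfrak p^{-1}(\mathbb B)$ (note $|M|=2$ since $-1$ is a regular value). The mapping class group $\mathrm{Mod}(S)$ of $S$ fixing $\partial S$ pointwise acts on the set $\mathfrak S_1$ of exceptional dissections on $(S,M)$ by applying diffeomorphisms to arcs, and also on $\mathfrak S_2$ by post-composition with $\mathfrak p$, compatibly with the bijection $\Phi$ of Proposition-Definition~\ref{prop-def:phi1}. I would record two facts: \emph{(a)} the stabilizer $\mathrm{Stab}_{\mathrm{Mod}(S)}(\mathbb A_0)$ is trivial --- a mapping class fixing each arc of a dissection cutting $S$ into disks, and fixing $\partial S$, is isotopic to the identity by the Alexander method \cite{FM12}; \emph{(b)} by Theorem~\ref{thm:kluitmann} the combined action of $\mathrm{Mod}(S)$ and $\mathfrak B_n$ on $\mathfrak S_1$ is transitive.

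The key step is the following computation. Any $f\in\mathfrak B_n=\mathrm{Mod}(D\setminus\{p_1,\dots,p_n\})$ lifts through $\mathfrak p$ --- $f_*$ preserves the kernel of the monodromy homomorphism $\pi_1(D\setminus\{p_i\})\to\mathrm{Sym}(M)=\mathbb Z/2$ because $f$ permutes the $p_i$ --- and exactly one of its two lifts lies in $\mathrm{Mod}(S)$; denote it $\tilde f$. By the Birman--Hilden theorem $f\mapsto\tilde f$ is an isomorphism onto $\mathrm{SMod}(S,\mathfrak p)\subseteq\mathrm{Mod}(S)$. Unwinding the braid action on $\mathfrak S_2$ (which sends $\mathfrak q\mapsto f\circ\mathfrak q$) and the formula $\Phi(\mathfrak q,\mathbb B)=\mathfrak q^{-1}(\mathbb B)$, one checks that if $\mathbb A=\phi\cdot\mathbb A_0$ with $\phi\in\mathrm{Mod}(S)$ then $f\cdot\mathbb A=\phi\,\tilde f^{-1}\cdot\mathbb A_0$ (using that a lift of $f$ through $\mathfrak p\circ\phi^{-1}$ is $\phi\tilde f\phi^{-1}$); compare Proposition~\ref{prop:braids1s2}. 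Two consequences: the $\mathfrak B_n$-orbit of $\mathbb A_0$ lies in $\mathrm{SMod}(S,\mathfrak p)\cdot\mathbb A_0$, so fact (b) in fact says that $\mathrm{Mod}(S)$ \emph{alone} acts transitively on $\mathfrak S_1$; hence by (a) the map $\phi\mapsto\phi\cdot\mathbb A_0$ is a bijection $\mathrm{Mod}(S)\to\mathfrak S_1$, under which the $\mathfrak B_n$-action becomes right translation by the subgroup $\mathrm{SMod}(S,\mathfrak p)$, whose orbits are the left cosets $\phi\cdot\mathrm{SMod}(S,\mathfrak p)$.

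Therefore $\mathfrak B_n$ acts transitively on $\mathfrak S_1$ if and only if $\mathrm{SMod}(S,\mathfrak p)=\mathrm{Mod}(S)$, and by the Ghaswala--McLeay theorem this holds exactly when $n\leq 3$. I expect the main obstacle to be the bookkeeping in the third paragraph: making the passage from the braid action on matching paths to the $\mathrm{SMod}(S,\mathfrak p)$-action precise, and in particular verifying that for each $f$ one of the lifts can be taken in $\mathrm{Mod}(S)$ (equivalently, to fix $\partial S$) --- this is exactly where the Birman--Hilden machinery does the real work. Verifying (a) is routine but should be spelled out.
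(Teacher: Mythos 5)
Your proposal is correct and follows essentially the same route as the paper: show that the set of exceptional dissections is a torsor under $\mathrm{Mod}(S)$ (freeness via the stabilizer argument, which the paper gets from the arc graph being a deformation retract of $S$), identify the braid group action with translation by $\mathrm{SMod}(S,\mathfrak p)$ via Birman--Hilden, and conclude with Ghaswala--McLeay. The only (minor) divergence is in establishing transitivity of $\mathrm{Mod}(S)$: you deduce it from Kluitmann's theorem together with the containment of braid orbits in $\mathrm{Mod}(S)$-orbits, whereas the paper argues more directly that for a double cover all dissections induce the same Hurwitz system (there being only one transposition of a two-element set) and that dissections with equal Hurwitz systems are $\mathrm{Mod}(S)$-equivalent --- both arguments ultimately rest on the same classification of branched covers by monodromy data.
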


\begin{proof}
We claim that $\mathrm{Mod}(S)$ acts freely on the set of  exceptional dissections (in general).
To see this, suppose $f\in\mathrm{Mod}(S)$ fixes some exceptional dissection $\mathbb A$.
Then $f$ fixes $M$ by definition and sends each arc to a homotopic arc.
After composing $f$ with a diffeomorphism isotopic to the identity, we can assume that $f$ fixes $\mathbb A$.
But since the graph formed by the arcs in $\mathbb A$ is a deformation retract of $S$, $f$ must be isotopic to the identity.

Note that if two exceptional dissections $\mathbb A_1$ and $\mathbb A_2$ on $(S,M)$ give rise to the same Hurwitz system, then there is an $f\in\mathrm{Mod}(S)$ which takes $\mathbb A_1$ to $\mathbb A_2$.
Under our assumptions on $\mathfrak p$, all exceptional dissections give rise to the same Hurwitz system, so $\mathrm{Mod}(S)$ acts transitively on the set of exceptional dissections.
Thus $\mathrm{SMod}(S,\mathfrak p)=\mathfrak B_n$ acts transitively iff $\mathrm{SMod}(S,\mathfrak p)=\mathrm{Mod}(S)$.
\end{proof}

Under the correspondence between exceptional dissections and exceptional sequences, and the compatibility of braid group actions on them, see Corollary \ref{cor:braid}, we thus obtain our counter-examples to the Bondal--Polishchuk conjecture.

\begin{theorem}
\label{thm:main1}
Let $(S,M,\nu)$ be a marked graded surface with $|M|=2$ and either one boundary component and $g(S)\geq 2$ or two boundary components and $g(S)\geq 1$.
Then the braid group action on the set of exceptional sequences in $\mathcal F(S,M,\nu)$ is not transitive but has infinitely many orbits.
\end{theorem}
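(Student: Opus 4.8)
The plan is to reduce everything to a statement about exceptional dissections, realize the set of exceptional dissections as a torsor under $\mathrm{Mod}(S)$, and then read off the orbit count from the Birman--Hilden and Ghaswala--McLeay theorems via the $n\geq 4$ regime.

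First I would set up the branched covering. Since $|M|=2\geq 2$, the marked surface $(S,M)$ admits a simple branched covering $\mathfrak p\colon (S,M)\to D$ (by the corollary characterizing non-emptiness of $\mathfrak S_1,\mathfrak S_2,\mathfrak S_3$), and any such $\mathfrak p$ is automatically a \emph{double} cover: it has $m=|M|=2$ sheets, and with only two sheets any ramification involves both of them. By the Riemann--Hurwitz count carried out in Proposition~\ref{prop:exc}, the number of branch points is $n=m+b+2g-2=b+2g$. In the first case ($b=1$, $g\geq 2$) this gives $n=2g+1\geq 5$, and in the second case ($b=2$, $g\geq 1$) it gives $n=2g+2\geq 4$; so $n\geq 4$ in all cases under consideration.

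Next I would exploit the structure of $\mathfrak S_1$, the set of exceptional dissections on $(S,M)$. Exactly as in the proof of the corollary preceding this theorem, $\mathrm{Mod}(S)$ acts \emph{freely} on $\mathfrak S_1$ (because the union of the arcs of any dissection is a spine of $S$), and it acts \emph{transitively}: since $|M|=2$, the Hurwitz system attached to any exceptional dissection is the constant sequence of the unique transposition of $M$, so all exceptional dissections share the same underlying Hurwitz system and hence are related by some element of $\mathrm{Mod}(S)$. Thus $\mathfrak S_1$ is a $\mathrm{Mod}(S)$-torsor. Under the identification of the $\mathfrak B_n$-action on $\mathfrak S_2\cong\mathfrak S_1$ with the action of the symmetric mapping class group $\mathrm{SMod}(S,\mathfrak p)\cong\mathfrak B_n$ (Birman--Hilden, together with the definition of the action on $\mathfrak S_2$ by $\mathfrak p\mapsto f\circ\mathfrak p$ and Proposition~\ref{prop:braids1s2}), the $\mathfrak B_n$-orbits in $\mathfrak S_1$ correspond bijectively to the cosets of $\mathrm{SMod}(S,\mathfrak p)$ in $\mathrm{Mod}(S)$. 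Since $n\geq 4$, the Ghaswala--McLeay theorem says $\mathrm{SMod}(S,\mathfrak p)$ has infinite index in $\mathrm{Mod}(S)$, so $\mathfrak B_n$ has infinitely many orbits on $\mathfrak S_1$; in particular the action is not transitive. Finally, the braid-equivariant bijection between exceptional dissections and exceptional sequences (Theorem~\ref{thm:cor-exp} and Theorem~\ref{thm:braid}, cf. Corollary~\ref{cor:braid}) transports this conclusion to the set of full exceptional sequences in $\mathcal F(S,M,\nu)$, which therefore also carries infinitely many $\mathfrak B_n$-orbits and in particular no transitive action.

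The only genuinely delicate point in this scheme is the identification, used above, of the mutation-defined braid action on dissections with the geometric action of $\mathrm{SMod}(S,\mathfrak p)$: one has to check that if $f\in\mathrm{Mod}(D\setminus\{p_1,\dots,p_n\})$ represents $\sigma\in\mathfrak B_n$ with symmetric lift $\tilde f$, then the exceptional dissection attached to $f\circ\mathfrak p=\mathfrak p\circ\tilde f$ is $\tilde f^{-1}(\mathbb A)$, i.e. that the $\mathfrak S_2$-action really restricts to the $\mathrm{SMod}(S,\mathfrak p)$-action on dissections. This, however, is immediate from the definition of the $\mathfrak B_n$-action on $\mathfrak S_2$ together with Proposition~\ref{prop:braids1s2}, so no new argument is needed; everything else is bookkeeping on top of the results already established in Sections~\ref{sec:3sets}--\ref{section:counterexamples} and the cited theorems of Birman--Hilden and Ghaswala--McLeay.
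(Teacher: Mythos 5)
Your argument is correct and follows essentially the same route as the paper: reduce to exceptional dissections via Corollary~\ref{cor:braid}, observe that for $|M|=2$ the covering is a double cover with $n=b+2g\geq 4$ branch points, use the free and transitive $\mathrm{Mod}(S)$-action on dissections (all of which share the unique Hurwitz system) to identify $\mathfrak B_n$-orbits with cosets of $\mathrm{SMod}(S,\mathfrak p)$ via Birman--Hilden, and conclude from the infinite index statement of Ghaswala--McLeay. The only cosmetic difference is that you phrase the orbit count explicitly as a coset count, which the paper leaves implicit.
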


In particular, for the Fukaya category $\mathcal F(S,M,\nu)$ with surface as in Figure \ref{fig:g1b2}, or equivalently, for the derived category of the gentle algebra with quiver \eqref{eq:quiver1}, the group action of $\mathbb{Z}^4\rtimes \mathfrak{B}_4$ on the set of full exceptional sequences is not transitive.
An example of two exceptional sequences which are not related by the action of  $\mathbb{Z}^4\rtimes \mathfrak{B}_4$ are given by the exceptional sequence associated to the exceptional dissection  in Figure \ref{fig:g1b2} and the exceptional dissection obtained after a Dehn twist of one of the two boundary components since such a Dehn twist is not an element of the symmetric mapping class group. 
Those cases in Theorem~\ref{thm:main1} where $\nu$ is such that the corresponding graded gentle algebra is concentrated in degree zero also appear in~\cite{D21}, providing examples of derived categories which are not silting connected.

A natural future direction is to analyze the case of $m$-to-$1$ coverings for $m>2$.
These seem to fall outside the scope of Birman--Hilden theory.

\begin{question}
For which marked surfaces $(S,M)$ is the action of the braid group on exceptional dissections (equivalently: branched coverings  with matching paths or framed Hurwitz systems) transitive?
\end{question}

Going beyond Fukaya categories of surfaces, one could explore whether the phenomenon of non-transitivity of the braid group action also appears in algebraic geometry.

\begin{question}
Are there counterexamples to Conjecture~\ref{conj:bp} where the triangulated category is of the type $D^b(\mathrm{Coh}(X))$ for some smooth projective variety $X$?
\end{question}

A possible general strategy for constructing counterexamples is suggested by the following observation.
{We say that a full exceptional sequence $\mathbf X=(X_1,\ldots,X_n)$ of a triangulated category $\mathcal C$ is \textit{preserved} by an autoequivalence $\Phi:\mathcal C\to\mathcal C$ if $\Phi(X_i)\cong X_i[m_i]$, for some $m_i\in\mathbb Z$, $i=1,\ldots,n$.
The following is immediate from the definition of mutation.
\begin{lemma}
If $\Phi$ preserves $\mathbf X$, then $\Phi$ preserves any exceptional sequence in the $\mathfrak B_n$-orbit of $\mathbf X$.
\end{lemma}
Thus, if one can find an autoequivalence $\Phi$, a full exceptional sequence $\mathbf X$ preserved by $\Phi$, and a full exceptional sequence $\mathbf X'$ not preserved by $\Phi$, then the braid group cannot act transitively on full exceptional sequences in $\mathcal C$.

In our counterexamples coming from branched double coverings $\mathfrak p:S\to D$ we can take $\Phi$ to be induced by the covering involution, $\mathbf X$ an exceptional dissection lifting a collection of matching paths in $D$, and $\mathbf X'=\varphi(\mathbf X)$ where $\varphi\in \mathrm{Mod}(S)\setminus \mathrm{SMod}(S,\mathfrak p)$. Here we need to assume that the grading $\nu$ is chosen so that it is invariant under the covering involution to ensure the existence of $\Phi$.}

\bibliographystyle{plain}
\bibliography{Chang-Haiden-Schroll}

\end{document}